\newtheorem{theo}{Theorem}[section]
\newtheorem{prop}{Proposition}[section]
\newtheorem{lem}{Lemma}[section]
\newtheorem{rk}{Remark}[section]
\newtheorem{exa}{Example}[section]
\newtheorem{coro}{Corollary}[section]
\numberwithin{equation}{section}
\def\R{\mathbb{R}}
\def\N{\mathbb{N}}
\def\calE{{\cal{E}}}
\def\calF{{\cal{F}}}
\def\calC{{\cal{C}}}
\newcommand{\Hmu}{H_{\mu}}
\newcommand{\Hnu}{H_{\nu}}
\newcommand{\varp}{\varphi}
\newcommand{\lam}{\lambda}
\newcommand{\Lam}{\Lambda}
\newcommand{\Om}{\Omega}
\newcommand{\alp}{\alpha}
\newcommand{\phiok}{\varp_0^{(k)}}
\newcommand{\phiomu}{\varp_0^{(\mu)}}
\newcommand{\phionu}{\varp_0^{(\nu)}}
\newcommand{\lamok}{\lam_0^{(k)}}
\newcommand{\lamomu}{\lam_0^{(\mu)}}
\newcommand{\lamonu}{\lam_0^{(\nu)}}
\newcommand{\xinu}{\xi^{(\nu)}}
\newcommand{\ximu}{\xi^{(\mu)}}
\begin{document}

\title{Pointwise estimates for the ground states of some classes of positivity preserving operators }

\author{\normalsize Ali Beldi
, Nedra Belhadjrhouma 
 \& Ali BenAmor\footnote{corresponding author} 
}

\date{}
\maketitle
\begin{abstract} We establish pointwise estimates for the ground states  of some classes of positivity preserving operators. The considered operators are negatively perturbed (by measures) strongly local Dirichlet operators. These estimates will be written in terms of the Green's kernel of the considered operators, whose existence will be proved.  In many circumstances our estimates are even sharp so that they recover known results about the subject.  The results will deserve to obtain large time heat kernel estimates for the related operators.
\end{abstract}
{\bf Key words}: Hardy's inequality, ground state, ultracontractivity, Dirichlet form, energy measure.

\section{Introduction} 
Let $\Om$ be an open connected and bounded subset of the Euclidean space $\R^d$ and $-\Delta_{\Om}$ be the Dirichlet-Laplacian on $\Om$. It is well known that the ground state energy of $-\Delta_{\Om}$, which we denote by $\varp_0$, enjoys the property of being comparable to the function $(-\Delta_{\Om})^{-1}1$.  In other words, if we designate by $G_{\Om}$ the Green's kernel of  $-\Delta_{\Om}$, then
\begin{eqnarray}
\varp_0\sim \int_{\Om}G_{\Om}(\cdot,y)\,dy\ {\rm on}\ \Om.
\end{eqnarray}

 This result was extended to negative perturbations of $-\Delta_{\Om}$ satisfying Kato condition, namely to the ground state $\varp_0^V$ of the operators $-\Delta_{\Om}-V$ where $V$ is a positive measurable function in the Kato-class and under some regularity assumptions imposed on the domain $\Om$ (see for instance the papers of Ba\~{n}uelos \cite{banuelos91}, Davies \cite{davies86} and Davies' book \cite{davies-book} ).\\
Actually,  Ba\~{n}uelos proved (among others) in \cite[Theorem 2]{banuelos91} that if $\Om$ is a nontangentially accessible (NTA) bounded domain  and $V$  in the Kato-class  is such that
\begin{eqnarray}
\lam_0^{V}:=  \inf_{f\in C_c^1(\Om)\setminus\{0\}}\frac{\int_{\Om}|\nabla f|^2\,dx-\int_{\Om}f^2V\,dx}{(\int_{\Om}f^2\,dx)}>0,  
\label{strict}
\end{eqnarray}
is  nondegenerate and has a strictly positive eigenfunction, denoted by $\varp_0^V$, then
\begin{eqnarray}
\varp_0\sim\varp_0^V.
\end{eqnarray}
However,  for (NTA) domains the conditional gauge theorem (which is one of the main ingredients in Ba\~{n}uelos' proof)  holds true  and the Green's functions of $-\Delta_{\Om}$ and that of $-\Delta_{\Om}-V$  ($G_{\Om}$ and $G_{\Om}^V$) are comparable. So that the latter comparison can be written as
\begin{eqnarray}
\varp_0^{V}\sim\int_{\Om}G_{\Om}^V(\cdot,y)\,dy,
\end{eqnarray}
and the latter function is nothing else but the $W_0^{1,2}$ -solution of the equation 
\begin{eqnarray}
-\Delta u-Vu=1\ {\rm on}\ \Om.
\end{eqnarray}
In  \cite{davilla-dupaigne}, D\'avila--Dupaigne improved the result to more general $V$ that do not necessary belong to the Kato class, including for instance
\begin{eqnarray}
 V(x)=(\frac{d-2}{2})^2|x|^{-2}\ {\rm and}\  V(x)=\frac{1}{4}{\rm dist}^{-2}(x,\partial\Om),
\end{eqnarray}
where  $d\geq 3$ and $\Om$ is regular.\\
Those $V$ should satisfy  the conditions that $V\in L^1_{\rm loc}$ and there is $p>2$ such that
\begin{eqnarray}
\inf_{f\in C_c^1(\Om)\setminus\{0\}}\frac{\int_{\Om}|\nabla f|^2\,dx-\int_{\Om}f^2V\,dx}{(\int_{\Om}|f|^p\,dx)^{2/p}}>0,
\label{sob}
\end{eqnarray}
Obviously, condition (\ref{sob}) is  equivalent to an improved  Sobolev type inequality, whose relevance for intrinsic  ultracontractivity  property as well as for the compactness of the resolvent of the operator $-\Delta_{\Om}-V$  was recognized in \cite{davilla-dupaigne}. \\ 
Being inspired by the latter observation, we shall consider, in this paper,  the same problem in a more general framework. Precisely we shall replace the gradient energy form by  a Dirichlet form, $\calE$ with associated positive selfadjoint operator $H$, having the strong local property whose domain lies in some $L^2(X,m)$-space. The potential function $V$ will be however replaced by a positive measure, $\mu$ charging no set having zero capacity.\\
We shall prove that under some realistic assumptions, and especially under the assumptions that some improved Sobolev-Orlicz and Hardy-type inequalities hold true, then the positivity preserving operator related to the semi-Dirichlet form $\calE-\mu$ still shares many interesting features as for the classical case. In particular they have compact resolvent  and non degenerate ground state. Furthermore the ground state is comparable to  the solution, $\ximu$ of the equation $H_{\mu}\ximu=1$ (i.e., comparable  to $H_{\mu}^{-1}1$), where $H_{\mu}$ is the nonnegative selfadjoint   operator related to $\calE-\mu$.\\
Our method is based on a transformation argument (Doob's transformation) that leads first, to construct the operator $H_{\mu}:=H-\mu$  and to the fact that it has compact resolvent and second to  some ultracontractive semigroups (in the particular case where the transformation is done by means of the ground state, if one already nows about  its existence,  this leads to the intrinsic ultracontractivity of the operator $H_{\mu}$).\\
As an intermediate step,  we shall prove that the positivity preserving operators under considerations can be approximated, in the norm resolvent sense  by a sequence of operators whose  ground states can be estimated in a sharp way. This will lead to convergence of ground state energies and ground states and   enables us to carry over the comparison for the approximating operators to the limit operator.\\
To get the estimates for the ground states of the approximating operator we shall use on one side the intrinsic ultracontractivity property and on the other side Moser's  iteration technique as in \cite{davilla-dupaigne}.

\section{The framework and preparing results}
We first shortly describe the framework in which we shall state our results.\\
Let  $X$ be   a  separable locally compact metric space,  $m$ a positive Radon measure on Borel subsets of $X$ such that $m(U)>0,\ \forall\,\emptyset\neq U\subset X$.  All integrals of the type $\int \cdots$ are assumed to be over $X$. The space of real-valued continuous functions having compact support on $X$ will be 
denoted by $C_c(X)$. \\
Let   $\calE$ be a regular symmetric transient Dirichlet form, with domain $\calF:=D(\calE)$ w.r.t. the space $L^2:=L^2(X,m)$.  Along the paper we assume that $\calE$ is strongly local, i.e., $\calE(f,g)=0$, whenever $f,g\in\calF$ and $f$ is constant on the support of $g$ .\\
The local Dirichlet space related to $\calE$ will be denoted by $\calF_{\rm loc}$.  A function $f$ belongs to $\calF_{\rm loc}$ if for every open bounded  subset $\Om\subset X$
there is $\tilde f\in\calF$ such that $f=\tilde f$-a.e. on $\Om$.\\
We recall the known fact $\calE$ induces a positive-valued sets function called capacity.  If  a property holds true up to a set having zero capacity we shall say that it holds quasi-everywhere and we shall write 'q.e.'.\\
It is well know (see \cite{fuku-oshima}) that  every element from $\calF_{\rm loc}$ has a quasi-continuous  (q.c. for short) modification. We shall always implicitly assume that elements from $\calF_{\rm loc}$ has been modified so as to become quasi-continuous.\\
We also designate by $\calF_b:=\calF\cap L^{\infty}(X,m)$ and $\calF_{b,\rm loc}:=\calF_{\rm loc}\cap L_{\rm loc}^{\infty}(X,m)$. From the very definition we derive that both $\calF_b$ and  $\calF_{b,\rm loc}$ are algebras.\\
Given $f,g\in\calF$, we set $\Gamma[f]$ the {\em energy measure} of $f$ and  $\Gamma(f,g)$ the {\em  mutual energy measure} of $f,g$ (see \cite[pp.110-114]{fuku-oshima}). Every strongly local Dirichlet form,  $\calE$ possesses the following representation
\begin{eqnarray}
\calE[f]:=\calE(f,f)=\int_X\,d\Gamma[f],\ \forall\,f\in\calF.
\label{energy}
\end{eqnarray}
The representation goes as follows: for $f\in\calF_b$ its energy measure is defined by
\begin{eqnarray}
\int\phi\,d\Gamma[f]=\calE(f,\phi f)-\frac{1}{2}\calE(f^2,\phi),\ \forall 0\leq\phi\in\calF\cap C_c(X).
\end{eqnarray}
Truncation and monotone convergence allow then to define $\Gamma[f]$ for every $f\in\calF$.\\
Furthermore with the help of the  strong locality property, i.e.,
\begin{eqnarray}
\int_{\{f=c\}}d\Gamma[f]=0,\ \forall\,f\in\calF,
\end{eqnarray}
it is possible to define $\Gamma[f]$ for every $f\in\calF_{\rm loc}$ as follows: for every open bounded subset $\Om\subset X$
\begin{eqnarray}
1_{\Omega}d\Gamma[f]=1_{\Omega}d\Gamma[\tilde f],
\end{eqnarray}
where $\tilde f\in\calF$ and $f=\tilde f$-q.e. on $\Omega$.\\
By polarization and regularity we can thereby define a Radon-measure-valued bilinear form on $\calF_{\rm loc}$ denoted by $\Gamma(f,g)$,  so that
\begin{eqnarray}
\calE(f,g)=\int \,d\Gamma(f,g),\ \forall\,f,g\in\calF_{\rm loc},\ {\rm either}\ f {\rm or}\ g\  {\rm has\ compact\ support}.
\label{energy'}
\end{eqnarray}
The truncation property for $\calE$ reads as follows: For every $a\in\R$, every $f\in\calF_{\rm loc}$, having compact support and every $g\in\calF_{\rm b,loc}$ we have
\begin{eqnarray}
\calE((f-a)_+,g)=\int_{\{f>a\}}\,d\Gamma(f,g)\ {\rm and}\  \calE[(f-a)_+]=\int_{\{f>a\}}\,d\Gamma[f].
\label{truncation}
\end{eqnarray}
Furthermore the following product formula holds true
\begin{eqnarray}
d\Gamma(fh,g)=fd\Gamma(h,g)+hd\Gamma(f,g),\ \forall\,f,g,h\in \calF_{\rm b,loc}.
\label{product}
\end{eqnarray}
By the regularity assumption the latter formula extends to every $f,g,h\in\calF_{\rm loc}$.\\ Another rule that we shall occasionally use is the {\em chain rule} (See \cite[pp.111-117]{fuku-oshima}): For every function $\phi:\R\to\R$ of class $C^1$ with bounded derivative ($\phi\in C_b^1(\R)$), every $f\in \calF_{\rm loc}$ and every $g\in\calF_{\rm b,loc}$, $\phi(f)\in\calF_{\rm loc}$ and
\begin{eqnarray}
d\Gamma(\phi(f),g)=\phi'(f)d\Gamma(f,g).
\label{chain rule}
\end{eqnarray}
Formula (\ref{chain rule}) is still valid for $\phi(t)=|t|^{p/2}$  when restricted to locally quasi-bounded $f$.\\ 
As long as we are concerned with Sobolev-Orlicz inequalities, we will give some material related to the underlying spaces. From now on we shall denote the Lebesgue-Orlicz spaces  $L^{\Phi}(\Omega,\nu)$ simply by  $L^{\Phi}(\nu)$, whereas in the case $\nu=m$ they will be denoted by $L^{\Phi}$. 
We also fix an $N$-function $\Phi:[0,\infty)\to[0,\infty)$, i.e., a convex function such that
\begin{eqnarray}
\Phi(t)=0\iff t=0,\ \lim_{t\to 0}\frac{\Phi(t)}{t}=0,\   \lim_{t\to \infty}{\Phi(t)}= \lim_{t\to \infty}\frac{\Phi(t)}{t}=\infty,
\end{eqnarray}
and denote by $\Psi$ its {\em complementary} function and set
\begin{eqnarray}
\Lambda(s):=\frac{1}{s\Phi^{-1}(1/s)},\ s>0.
\end{eqnarray}
An N-function $\Phi$ is said to be an  {\em admissible}, if   the following integrability condition near zero is satisfied
\begin{eqnarray}
\int_0^{\alp}(s\Lambda(s))^{-1}\,ds<\infty\ {\rm for\ some}\ \alp>0.
\label{integrability}
\end{eqnarray}
We quote that a necessary and sufficient condition for a $N$-function to be admissible is that the  function  $\frac{\Phi^{-1}(t)}{t^2}$ is integrable at  infinity.\\
Among functions that are admissible we cite $N$-functions $\Phi$  satisfying  the $\nabla_2$-condition  ($\Phi\in \nabla_2$ for short), i.e.,  there is $l>1$  and $t_0>0$ such that
\begin{equation}\label{o3}
\Phi(t)\leq \frac{1}{lt}\Phi(lt),\  \forall\,t\geq t_0,
\end{equation}
are admissible. Indeed, by \cite[Corollary 5, p.26]{RR}, if $\Phi\in\nabla_2$ then there is a finite constant $C>0$,  $\epsilon>0$ and $t_0>0$ such that 
\begin{eqnarray}
\Phi(t)\geq Ct^{1+\epsilon},\ \forall\,t>t_0.
\end{eqnarray}
Yielding therefore $\Phi^{-1}(t)\leq C't^{1/{1+\epsilon}}$, for large $t$.\\
Let $\mu$ be a fixed positive Radon measure on Borel subsets of $X$, which does not charge sets having zero capacity. We shall also adopt some assumptions along the paper.\\
The first assumption that we shall adopt, along the paper, is  the following:  there is a function $s\in\calF_{loc}\cap L^2,\ s>0$-q.e. such that
\begin{eqnarray*}
(SUP): &~~& \calE(s,f)-\int_Xsf\,d\mu\geq 0,\ \forall\,0\leq f\in\calF\cap C_c(X).
\end{eqnarray*}
This condition deserves some comments. First the additional assumption $s\in L^2$ is automatically satisfied if either $X$ is relatively compact.\\
Second,  assumption (SUP) asserts the existence of a nonnegative supersolution of the operator $H-\mu$ and is, according to \cite{fitz00,benamor-belhaj}, almost equivalent to the occurrence of the following Hardy's inequality
\begin{eqnarray}
\int f^2\,d\mu\leq \calE[f],\ \forall\,f\in\calF.
\label{hardy}
\end{eqnarray}
By 'almost equivalent' we mean that  if (SUP)  holds true  then inequality (\ref{hardy}) holds true as well. However, if (\ref{hardy}) occurs then for every $\delta\in(0,1)$ there is $s\in\calF$ such that 
\begin{eqnarray}
  \calE(s,f)-\delta\int_Xsf\,d\mu\geq 0,\ \forall\,0\leq f\in\calF\cap C_c(X).
\label{reciproque}
\end{eqnarray} 
We shall maintain, throughout the paper, that the following improved Sobolev-Orlicz inequality holds true: there is a finite constant $C_S>0$  such that
\begin{eqnarray*}
(ISO): &~~& \parallel \!f^2\!\parallel_{L^{\Phi}}
\leq C_S\big(\mathcal{E}[f]-\int f^2\,d\mu\big),\ \forall\,f\in \mathcal{F}.
\end{eqnarray*}
For  discussions about connections between (ISO) (especially in the case where $\mu=0$) and various types of Logarithmic-Sobolev inequalities we refer the reader to \cite{cipriani-sob, wang09}.\\ 
In conjunction with $\Phi$, there is another function which will play a decisive role in the paper and which we denote by $\phi_1:=\phi_1(t)=t\Psi^{-1}(t),\ \forall\,t\geq 0$. We assume from no on that the function $\phi_1$ is {\em admissible}.\\
The following lemma indicates that the latter condition is fulfilled in many situations, in particular for $\Phi(t)=1/pt^p,\ t\geq 0$ and $1<p<\infty$.

\begin{lem} Assume that $\Phi\in\nabla_2$ and that $\phi_1$ is convex. then $\phi_1$ is admissible.
\label{admissible}
\end{lem}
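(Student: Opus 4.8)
\emph{Proof plan.} The plan is to reduce the admissibility of $\phi_1$ to a strictly superlinear polynomial lower bound for $\phi_1$ near infinity, and to produce that bound by combining the $\nabla_2$-growth of $\Phi$ with the classical duality estimate between an $N$-function and its complementary function. Recall we are allowed to check admissibility through the stated necessary and sufficient condition: $\phi_1$ is admissible iff $t\mapsto\phi_1^{-1}(t)/t^2$ is integrable at infinity. For this criterion to apply we first note that $\phi_1$ is itself an $N$-function: it is convex by hypothesis, $\phi_1(t)=t\Psi^{-1}(t)=0$ only at $t=0$, $\phi_1(t)/t=\Psi^{-1}(t)\to 0$ as $t\to 0$, and $\phi_1(t)/t=\Psi^{-1}(t)\to\infty$, $\phi_1(t)\to\infty$ as $t\to\infty$, since $\Psi$ is an $N$-function.

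Next I would exploit $\Phi\in\nabla_2$. By the consequence of \cite[Corollary 5, p.26]{RR} already quoted in the text, there are $\epsilon>0$, $C>0$, $t_0>0$ with $\Phi(t)\ge Ct^{1+\epsilon}$ for $t\ge t_0$, hence $\Phi^{-1}(t)\le C_1\,t^{1/(1+\epsilon)}$ for all large $t$. Then I invoke the standard inequalities $t\le\Phi^{-1}(t)\Psi^{-1}(t)\le 2t$, valid for all $t\ge 0$ (a direct consequence of Young's inequality; see \cite{RR}), to get $\Psi^{-1}(t)\ge t/\Phi^{-1}(t)\ge C_1^{-1}t^{\epsilon/(1+\epsilon)}$ for large $t$. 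Multiplying by $t$ and writing $\delta:=\epsilon/(1+\epsilon)>0$ yields
\[
\phi_1(t)=t\Psi^{-1}(t)\ge C_1^{-1}\,t^{1+\delta}\qquad\text{for all sufficiently large }t.
\]

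Since $\phi_1$ is a (strictly increasing) $N$-function, this bound can be inverted to give $\phi_1^{-1}(t)\le C_2\,t^{1/(1+\delta)}$ for large $t$, whence
\[
\frac{\phi_1^{-1}(t)}{t^2}\le C_2\,t^{\frac{1}{1+\delta}-2},\qquad \frac{1}{1+\delta}-2<-1,
\]
so $t\mapsto\phi_1^{-1}(t)/t^2$ is integrable near infinity and $\phi_1$ is admissible.

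The argument is essentially routine; the only points that need a little care are applying the $\Phi^{-1}/\Psi^{-1}$ duality estimate with the correct constants and checking that the (generalized) inverse of $\phi_1$ behaves well, which the convexity hypothesis guarantees. I do not expect a genuine obstacle: the whole role of the $\nabla_2$-hypothesis here is to upgrade what could a priori be merely logarithmic growth of $\phi_1$ into strictly superlinear power growth, without which integrability of $\phi_1^{-1}(t)/t^2$ at infinity — and hence admissibility — could fail.
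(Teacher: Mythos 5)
Your proof is correct and follows essentially the same route as the paper's: verify that $\phi_1$ is an $N$-function using convexity plus the limit behaviour inherited from $\Psi$, then combine the $\nabla_2$-consequence $\Phi(t)\geq Ct^{1+\epsilon}$ with the duality inequality $t\leq\Phi^{-1}(t)\Psi^{-1}(t)\leq 2t$ to get a strictly superlinear power lower bound $\phi_1(t)\gtrsim t^{1+\delta}$ (you write the exponent as $1+\epsilon/(1+\epsilon)$, the paper as $2-1/(1+\epsilon)$, which is the same number), and finally invert to conclude that $\phi_1^{-1}(t)/t^2$ is integrable at infinity. The only cosmetic difference is the bookkeeping of the exponent; the ideas and the cited ingredients coincide.
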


\begin{proof} From the fact that $\Phi$ is an $N$-function we deduce
\begin{eqnarray}
\phi_1(t)=0\iff t=0,\ \lim_{t\to 0}\phi_1(t)/t=0\ {\rm and}\ \lim_{t\to\infty}\phi_1(t)/t=\infty,
\end{eqnarray}
which together with the convexity assumption yields that $\phi_1$ is an $N$-function.\\
The integrability condition: From  the known inequality for conjugate Young functions
\begin{eqnarray}
t\leq \Phi^{-1}(t)\Psi^{-1}(t)\leq 2t,\ \forall\,t\geq 0,
\end{eqnarray}
in conjunction with the fact that  $\Phi\in\nabla_2$, we obtain that there is  $a>0$,  $\epsilon>0$ and $t_0>0$ such that 
\begin{eqnarray}
\phi_1(t)\geq at^{2-\frac{1}{1+\epsilon}},\ \forall\,t>t_0.
\end{eqnarray}
Thus for large $t$ we have $t^{-2}\phi_1^{-1}(t)\leq at^{\frac{1+\epsilon}{1+2\epsilon}-2}$, and the latter function is integrable at infinity,  yielding the admissibilty of $\phi_1$.
\end{proof}

We also have an inclusion relation between the spaces $L^{\Phi}$ and $L^{\phi_1}$.
\begin{lem} The space  $L^{\Phi}$ embeds continuously into $ L^{\phi_1}$.
\label{inclusion}
\end{lem}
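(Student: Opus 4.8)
The plan is to reduce this functional--analytic statement to a pointwise comparison of the two generating functions at infinity, and then to establish that comparison by a short computation that avoids any doubling condition.

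\emph{Reduction.} I would first recall the standard criterion for embeddings between Orlicz spaces over a finite measure space (see \cite{RR}): $L^{\Phi}$ embeds continuously into $L^{\phi_1}$ as soon as there exist constants $K>0$ and $t_0\geq 0$ with
\[
\phi_1(t)\leq \Phi(Kt),\qquad \forall\,t\geq t_0 .
\]
Thus the whole matter is to produce such $K$ and $t_0$, and only the regime $t\to\infty$ is relevant.

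\emph{The comparison.} Here I would start from the elementary inequality for conjugate Young functions already recorded in the excerpt, $t\leq \Phi^{-1}(t)\Psi^{-1}(t)\leq 2t$, which gives $\Psi^{-1}(t)\leq 2t/\Phi^{-1}(t)$ and hence
\[
\phi_1(t)=t\Psi^{-1}(t)\leq \frac{2t^{2}}{\Phi^{-1}(t)} .
\]
So it suffices to check that $2t^{2}\leq \Phi^{-1}(t)\,\Phi(2t)$ for all large $t$. I would substitute $a:=\Phi^{-1}(t)$, i.e.\ $t=\Phi(a)$, so that this inequality turns into $2\Phi(a)^{2}\leq a\,\Phi\!\big(2\Phi(a)\big)$. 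Two standard facts about the $N$-function $\Phi$ now close the gap: first, convexity together with $\Phi(0)=0$ forces $x\mapsto \Phi(x)/x$ to be nondecreasing; second, $\Phi(x)/x\to\infty$ as $x\to\infty$, so there is $a_0$ with $\Phi(a)\geq a$, and a fortiori $2\Phi(a)\geq a$, for $a\geq a_0$. Applying monotonicity of $\Phi(x)/x$ at the point $x=2\Phi(a)\geq a$ yields
\[
\frac{\Phi\!\big(2\Phi(a)\big)}{2\Phi(a)}\geq \frac{\Phi(a)}{a},
\qquad\text{hence}\qquad
\Phi\!\big(2\Phi(a)\big)\geq \frac{2\Phi(a)^{2}}{a},
\]
which is precisely what is needed. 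Undoing the substitution gives $\phi_1(t)\leq \Phi(2t)$ for all $t\geq t_0:=\Phi(a_0)$, so one may take $K=2$; together with the reduction this proves the embedding.

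\emph{Where the difficulty lies.} The computation itself is short; the one point to get right is that it must go through \textbf{without} invoking a $\nabla_2$ (or $\Delta_2$) condition on $\Phi$, the essential device being the substitution $a=\Phi^{-1}(t)$ combined with the monotonicity of $\Phi(x)/x$. It is also worth noting that the reverse comparison $\Phi(t)\leq \phi_1(Kt)$ holds near $t=0$ --- as the power example $\Phi(t)=t^{p}/p$ already shows --- so the embedding is genuinely one-directional; this is why the argument is phrased for the behaviour at infinity, the regime that governs embeddings over a finite measure space.
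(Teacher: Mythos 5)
Your proof is correct, and it takes a genuinely different route from the paper's. The paper's argument is shorter: it applies Young's inequality $tr\leq\Phi(t)+\Psi(r)$ with $r=\Psi^{-1}(t)$ to obtain $\phi_1(t)=t\Psi^{-1}(t)\leq\Phi(t)+t$, then uses $\Phi(t)/t\to\infty$ to absorb the linear term for $t$ large, giving $\phi_1(t)\leq 2\Phi(t)$; finiteness of $m(X)$ then yields $L^{2\Phi}\subset L^{\phi_1}$, and one concludes by noting that $L^{2\Phi}$ and $L^{\Phi}$ carry equivalent norms. You instead start from the two-sided conjugate bound $t\leq\Phi^{-1}(t)\Psi^{-1}(t)\leq 2t$ (which the paper invokes only in the proof of Lemma~\ref{admissible}), substitute $a=\Phi^{-1}(t)$, and exploit the monotonicity of $x\mapsto\Phi(x)/x$ to land directly on $\phi_1(t)\leq\Phi(2t)$ for large $t$ --- that is, on the embedding criterion in its standard form $\phi_1(\cdot)\leq\Phi(K\cdot)$, without the detour through $L^{2\Phi}$. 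Both arguments are elementary and, as you emphasize, neither requires a $\Delta_2$ or $\nabla_2$ condition on $\Phi$; the paper's is the more economical, while yours makes the reduction to the textbook Orlicz-embedding criterion explicit and reuses a tool already present elsewhere in the paper.
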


\begin{proof} From Young's inequality
\begin{eqnarray}
tr\leq \Phi(t)+\Psi(r),\ \forall\,r,t\geq 0,
\end{eqnarray}
we get
\begin{eqnarray}
t\Psi^{-1}(t)=\phi_1(t)\leq \Phi(t)+t,\ \forall\,t\geq 0.
\end{eqnarray}
Taking the behavior of $\Phi$ at infinity into account: $\lim_{t\to\infty}\Phi(t)/t=\infty$, we conclude that there is $T>0$ such that
\begin{eqnarray}
\phi_1(t)\leq 2\Phi(t),\ \forall\,t>T.
\end{eqnarray}
Since $m(X)<\infty$ we conclude that $L^{2\Phi}\subset L^{\phi_1}$, with continuous inclusion. The result follows by observing that the spaces $L^{2\Phi}$ and $L^{\Phi}$ have equivalent norms.  
\end{proof}
Set $H$ the positive selfadjoint operator associated to $\calE$ via Kato's representation theorem. For every $t>0$ we set $T_t:=e^{-tH}$ the semigroup associated to the operator $H$.\\
In the next theorem we will collect some spectral properties of the operator $H$
 on the light of the improved Sobolev-Orlicz inequality.

\begin{theo} For every $t>0$, the operator $T_t$ is ultracontractive. It follows that
\begin{itemize}
\item[i)] The operator $H$ has compact resolvent.
\item [ii)] Set $\lam_0$ the smallest eigenvalue of $H$. Then $\lam_0$ is 
nondegenerate, i.e., there is $\psi_0$  (the ground state) such that $\psi_0>0$ - q.e. and $ker (H-\lam_0)=\R\psi_0$. Furthermore $\psi_0$ is  quasi-bounded.
\end{itemize}
\label{spec-prop1}
\end{theo}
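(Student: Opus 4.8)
The heart of the assertion is the ultracontractivity of $(T_t)$; the spectral consequences (i)--(ii) will then follow from classical principles. Since $\mu$ is a positive measure, (ISO) at once implies the $\mu$-free Sobolev--Orlicz inequality
\begin{eqnarray}
\|f^2\|_{L^{\Phi}}\leq C_S\,\calE[f],\qquad\forall\,f\in\calF. \label{iso-reduced}
\end{eqnarray}
The plan is to turn \eqref{iso-reduced} into a Nash-type inequality and then run a Davies--Gross iteration on the semigroup. Recalling $\Lambda(s)=(s\Phi^{-1}(1/s))^{-1}$, the usual reformulation of Orlicz--Sobolev inequalities on a finite measure space (cf. \cite{cipriani-sob}) converts \eqref{iso-reduced} into a lower bound of the form $\calE[f]\geq\|f\|_{2}^{2}\,B\!\bigl(\|f\|_{2}^{2}/\|f\|_{1}^{2}\bigr)$ for $f\in\calF\cap L^{1}$, where $B$ is a nondecreasing function built from $\Lambda$ and we use $m(X)<\infty$ so that $L^{2}\hookrightarrow L^{1}$. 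Inserting this into the differential inequality for $u(t):=\|T_{t}f\|_{2}^{2}$ with $\|f\|_{1}\leq1$, namely $u'(t)=-2\,\calE[T_{t}f]\leq-2\,u(t)B(u(t))$ (using also $\|T_{t}f\|_{1}\leq\|f\|_{1}$), and separating variables, the finiteness of $u(t)$ for \emph{every} $t>0$ --- equivalently $\|T_{t}\|_{L^{1}\to L^{2}}<\infty$ --- is controlled precisely by the convergence of $\int^{\infty}\frac{du}{uB(u)}$, which after the change of variables $s=1/u$ is exactly the admissibility integral $\int_{0}^{\alp}(s\Lambda(s))^{-1}\,ds<\infty$. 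As $\Phi$ is admissible, we obtain $\|T_{t}\|_{L^{1}\to L^{2}}<\infty$ for all $t>0$; by self-adjointness and duality $\|T_{t}\|_{L^{2}\to L^{\infty}}=\|T_{t}\|_{L^{1}\to L^{2}}$, and the semigroup property gives $\|T_{t}\|_{L^{1}\to L^{\infty}}\leq\|T_{t/2}\|_{L^{2}\to L^{\infty}}\|T_{t/2}\|_{L^{1}\to L^{2}}=\|T_{t/2}\|_{L^{1}\to L^{2}}^{2}<\infty$, i.e. $T_{t}$ is ultracontractive.

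For (i): ultracontractivity means $T_{t}\colon L^{1}\to L^{\infty}$ is bounded, so by the Dunford--Pettis theorem $T_{t}$ is an integral operator with kernel $p_{t}\in L^{\infty}(X\times X)$; since $m(X)<\infty$ we get $p_{t}\in L^{2}(X\times X)$, hence $T_{t}$ is Hilbert--Schmidt, in particular compact, for every $t>0$. As $H\geq0$ is self-adjoint and $e^{-tH}$ is compact for all $t>0$, the spectrum of $H$ is purely discrete and $H$ has compact resolvent; in particular $\lam_{0}:=\inf\sigma(H)$ is an eigenvalue.

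For (ii): being the $L^{2}$-semigroup of a Dirichlet form, $(T_{t})$ is sub-Markovian, hence positivity preserving. Using the irreducibility of $\calE$ (a standing property in this framework, e.g. automatic when $X$ is connected) together with the fact that $\lam_{0}$ is an eigenvalue, the classical Perron--Frobenius principle for irreducible positivity preserving self-adjoint semigroups yields that $\lam_{0}$ is simple and that its eigenspace is spanned by a function $\psi_{0}$ with $\psi_{0}>0$ $m$-a.e.; thus $\ker(H-\lam_{0})=\R\psi_{0}$. Since $\psi_{0}\in D(H)\subset\calF$ it admits a quasi-continuous version, and passing to it and invoking irreducibility (equivalently, positivity of the heat kernel) gives $\psi_{0}>0$ q.e.; moreover the representation $\psi_{0}=e^{\lam_{0}t}T_{t}\psi_{0}$ together with $\|T_{t}\|_{L^{2}\to L^{\infty}}<\infty$ shows $\psi_{0}\in L^{\infty}$, so $\psi_{0}$ is quasi-bounded.

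The step demanding the most care is the passage from the Orlicz--Sobolev inequality \eqref{iso-reduced} to a Nash-type inequality with the \emph{correct} profile $B$, and the verification that the blow-up time of the associated ODE is infinite --- equivalently $\|T_{t}\|_{L^1\to L^2}<\infty$ for all $t>0$ --- precisely under the admissibility hypothesis $\int_{0}(s\Lambda(s))^{-1}\,ds<\infty$; this is where the behaviour of $\Phi^{-1}$ at infinity (equivalently of $\Lambda$ near $0$) must be tracked carefully. The remaining ingredients --- Dunford--Pettis, the Hilbert--Schmidt criterion, and Perron--Frobenius --- are standard.
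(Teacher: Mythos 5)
Your proof is correct and its conclusions (i)--(ii) coincide with the paper's, but the route to ultracontractivity is genuinely different, and one hypothesis you invoke is not quite the one the paper assumes. The paper does not re-derive ultracontractivity from a Nash inequality: it first passes from (ISO) (with the $\mu$-term dropped, as you do) to the inequality $\|f^2\|_{L^{\phi_1}}\leq C\,\calE[f]$ via the continuous embedding $L^{\Phi}\hookrightarrow L^{\phi_1}$ of Lemma \ref{inclusion}, and then cites \cite[Theorem 3.4]{benamor07}, whose hypothesis is precisely the standing assumption that $\phi_1$ is admissible. Your Davies--Gross/Nash argument works with $\Phi$ directly and therefore needs $\Phi$ itself to be admissible, which is \emph{not} among the paper's standing hypotheses (only $\phi_1$ is assumed admissible; the $\nabla_2$-condition is only a sufficient criterion appearing in Lemma \ref{admissible}, not a running assumption). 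Fortunately the gap closes in one line: Lemma \ref{inclusion} gives $\phi_1(t)\leq 2\Phi(t)$ for $t$ large, hence $\Phi^{-1}(s/2)\leq\phi_1^{-1}(s)$ for $s$ large, so integrability of $\phi_1^{-1}(s)/s^2$ at infinity forces integrability of $\Phi^{-1}(s)/s^2$ at infinity; i.e. admissibility of $\phi_1$ implies admissibility of $\Phi$. You should make this implication explicit before writing ``As $\Phi$ is admissible.'' What your self-contained route buys is independence from the external reference; what the paper's route buys is that the same $\phi_1$-based framework is reused later (Proposition \ref{green}, Theorem \ref{compact-resolvent}, Theorem \ref{main1}), where the relevant Orlicz norm really is a $\phi_1$-norm and the embedding trick no longer helps. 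Your treatment of (i) (Dunford--Pettis, $m(X)<\infty$, Hilbert--Schmidt) and of (ii) (irreducibility and Perron--Frobenius, then $\psi_0=e^{\lam_0 t}T_t\psi_0\in L^\infty$) matches the paper, which for the irreducibility/simplicity step cites \cite[Proposition 1.4.3]{davies-book}; you should likewise justify irreducibility rather than call it ``a standing property,'' since it is a consequence to be cited, not an assumption.
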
 
 
\begin{proof} 
Since $\phi_1$ is admissible, and  $L^{\Phi}\subset L^{\phi_1}$, continuously,  with the help of  \cite[Theorem 3.4]{benamor07}, we derive that $T_t:=e^{-tH}$ is ultracontractive. Thus it  has a nonnegative absolutely continuous essentially bounded  kernel $p_t,\ \forall\,t>0$. Hence since $m(X)<\infty$, we conclude that $T_t$ is a Hilbert-Schmidt operator, yielding that $H$ has compact resolvent.\\
On the other hand owing to  \cite[Proposition 1.4.3, p.24]{davies-book}, the Dirichlet form $\calE$ is irreducible, which implies  that the smallest eigenvalue of $H$, which we denote by $\lam_0$, is simple and has a q.e. nonnegative  normalized eigenfunction $\psi_0$. The quasi-boundedness of $\psi_0$ follows from the ultracontractivity property of $T_t$ and the proof is finished.
\end{proof}

\begin{rk} We have already mentioned in the proof of Theorem\ref{spec-prop1} that the Dirichlet form $\calE$ is irreducible, which implies together with the fact that $\calE$ is strongly local, that $X$ is connected ( see \cite{sturm94}).
\end{rk}

From the fact that $T_t$ is a Hilbert-Schmidt operator, we also derive  that the inverse operator $H^{-1}$ possesses a Green kernel $G_X$ which is positive,  symmetric and measurable.\\
We shall assume, throughout the paper, that the following Hardy-type inequality holds true: There is a constant $0<C_H<\infty$ such that
\begin{eqnarray*}
(HI) &~~~& \int_{}\frac{f^{2}}{\psi_0^2}dm \leq C_H\mathcal{E}[f],\  \forall\,f\in \mathcal{F}.
\end{eqnarray*}

\begin{prop} There exists a finite constant $C_G>0$ such that
\begin{eqnarray}
G_X(x,y)\geq C_G\psi_0(x)\psi_0(y),\ a.e..
\end{eqnarray}
\label{green}
\end{prop}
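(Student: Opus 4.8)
The plan is to exploit the Hardy-type inequality (HI) together with the variational characterization of the Green kernel. The key observation is that (HI) says precisely that the bilinear form $(f,g)\mapsto \calE(f,g)$ dominates the form $(f,g)\mapsto\int fg\psi_0^{-2}\,dm$; in resolvent terms, this should translate into a pointwise comparison between $G_X$ and the kernel $\psi_0(x)\psi_0(y)$ of the rank-one-type operator $f\mapsto\psi_0\int f\psi_0^{-1}\cdots$. More concretely, I would first recall that for $y\in X$ fixed, the function $x\mapsto G_X(x,y)$ is (q.e.) the element of $\calF$ (or $\calF_{\rm loc}$) representing $H^{-1}\delta_y$ in the weak sense $\calE(G_X(\cdot,y),f)=f(y)$ for all $f\in\calF\cap C_c(X)$, the latter making sense because (HI), combined with (ISO) and the ultracontractivity from Theorem \ref{spec-prop1}, forces point evaluations to be continuous on $\calF$ (every $f\in\calF$ is quasi-continuous and, via (HI), controlled pointwise after the standard capacity argument).

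Next I would test this identity against $f=\psi_0$ itself: since $H\psi_0=\lam_0\psi_0$, one gets $\calE(G_X(\cdot,y),\psi_0)=\psi_0(y)$, while also $\calE(\psi_0,h)=\lam_0\int\psi_0 h\,dm$ for $h\in\calF$. The heart of the matter is then to run the following comparison: consider, for fixed $y$, the function $u_y(x):=G_X(x,y)$ and show $u_y\geq c\,\psi_0(y)\,\psi_0(x)$ a.e. The natural route is a minimum-principle / ground-state-domination argument. Using that $T_t=e^{-tH}$ is positivity preserving and irreducible (Theorem \ref{spec-prop1} and the Remark), the kernel $G_X=\int_0^\infty p_t\,dt$ is strictly positive; and by the spectral expansion $p_t(x,y)=\sum_k e^{-\lam_k t}\psi_k(x)\psi_k(y)$ one has, integrating in $t$, a formal lower bound $G_X(x,y)\geq$ (contribution of the ground state after subtracting the negative cross terms). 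The clean way to make this rigorous without controlling all eigenfunctions is to note $p_t(x,y)\geq e^{-\lam_1 t}\cdot(\text{something})$ is false in general, so instead I would use the well-known fact that ultracontractivity plus irreducibility yields a lower Gaussian-type (here: ground-state) bound on $p_t$ for $t$ bounded away from $0$: there exist $t_0>0$ and $c_0>0$ with $p_{t_0}(x,y)\geq c_0\psi_0(x)\psi_0(y)$ a.e., and then $p_t(x,y)\geq e^{-\lam_0(t-t_0)}\int p_{t_0/?}\cdots$ — more efficiently, use $p_{t+t_0}(x,y)=\int p_t(x,z)p_{t_0}(z,y)\,dm(z)\geq c_0\psi_0(y)\int p_t(x,z)\psi_0(z)\,dm(z)=c_0 e^{-\lam_0 t}\psi_0(y)\psi_0(x)$, and integrate over $t\in(0,\infty)$ to obtain $G_X(x,y)\geq \frac{c_0}{\lam_0}\psi_0(x)\psi_0(y)$ after absorbing the finite piece $\int_0^{t_0}$. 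Here the role of (HI) is to guarantee the intermediate pointwise lower bound $p_{t_0}(x,y)\geq c_0\psi_0(x)\psi_0(y)$ via the Doob $h$-transform by $\psi_0$: the transformed semigroup is a conservative, ultracontractive (by (ISO), (HI)) Markov semigroup on $L^2(\psi_0^2\,dm)$ with finite total mass, hence its kernel is bounded below by a positive constant, which unwinds to exactly that inequality.

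The main obstacle I anticipate is the rigorous derivation of the uniform-in-space lower bound $p_{t_0}(x,y)\geq c_0\psi_0(x)\psi_0(y)$: one must set up the Doob transform $\hat{\calE}$ by $\psi_0$ carefully (using that $\psi_0$ is quasi-bounded and $\psi_0>0$ q.e. from Theorem \ref{spec-prop1}, and that $\psi_0^{-1}$ is controlled on the relevant sets thanks to (HI)), verify that $\hat{T}_t$ is a genuinely ultracontractive Markovian semigroup with $\hat T_t 1=1$ on the probability-type space $L^2(X,\psi_0^2 m)$ (note $\int\psi_0^2\,dm=1$), and then invoke that an ultracontractive conservative positivity-preserving semigroup on a finite measure space has kernel bounded below by a positive constant for each fixed time — this last step itself uses irreducibility plus a Harnack/averaging argument. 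Everything after that is the routine $t$-integration sketched above; the constant $C_G$ will come out as $c_0/\lam_0$ up to the harmless additive contribution of $\int_0^{t_0}p_t\,dt\geq 0$.
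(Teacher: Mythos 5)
Your proposal takes essentially the same route as the paper: a Doob $h$-transform by $\psi_0$, intrinsic ultracontractivity of the transformed semigroup derived from (ISO) and (HI) (the paper makes this precise via the Orlicz-space inequality $\|f^2\|_{L^{\phi_1}(\psi_0^2dm)}\leq 2(C_S+C_H)\calE^{\psi_0}[f]$ and \cite[Theorem 3.4]{benamor07}), then the standard large-time lower bound $p_t(x,y)\geq\tfrac{1}{2}e^{-\lam_0 t}\psi_0(x)\psi_0(y)$ for $t>T$, and integration over $(0,\infty)$. One small slip: the transformed semigroup $S_t=e^{-tH^{\psi_0}}$ is not conservative (since $H^{\psi_0}1=\lam_0$ one has $S_t1=e^{-\lam_0 t}$, not $1$; only $e^{-t(H^{\psi_0}-\lam_0)}$ is), but this is immaterial because the $e^{-\lam_0 t}$ factor is absorbed at the integration step.
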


\begin{proof} Set  $\calE^{\psi_0}$ the quadratic form defined on $L^2(\psi_0^2dm)$ by
\begin{eqnarray}
D(\calE^{\psi_0})=\big\{f:\psi_0f\in\calF\big\},\ \calE^{\psi_0}[f]=\calE[\psi_0f]
,\ \forall\,f\in D(\calE^{\psi_0}).
\end{eqnarray}
Then $\calE^{\psi_0}$ is a  Dirichlet form. Indeed, $\calE^{\psi_0}$ is related (via Kato's representation theorem) to the operator $H^{\psi_0}:=\psi_0^{-1}H\psi_0$, so that $e^{-tH^{\psi_0}}=\psi_0^{-1}e^{-tH}\psi_0$, which is Markovian.\\
In this step we will prove that $D(\calE^{\psi_0})$ embeds continuously into the space $L^{\phi_1}$.\\
We claim that
\begin{eqnarray}
\parallel f^2\parallel_{L^{\phi_{1}}(\psi_0^2dm)}
\leq 2(C_S+C_H)\calE^{\psi_0}[f],\ \forall\,f\in D(\calE^{\psi_0}).
\end{eqnarray}
Indeed, by H\"older's inequality we find

\begin{eqnarray}
  \int\psi_0^2\phi_1(\frac{f^2}{(C_S+C_H)\calE^{\psi_0}[f]})\,dm&=&\int \frac{\psi_0^2f^2}{(C_S+C_H)\calE^{\psi_0}[f]}\psi^{-1}(\frac{f^2}{(C_S+C_H)
\calE^{\psi_0}[f]})\,dm\nonumber\\
&\leq& 2\parallel \frac{\psi_0^2f^2}{(C_S+C_H)\calE^{\psi_0}[f]}\parallel_{L^{\phi}}\nonumber\\
&\cdot&\parallel\Psi^{-1}(\frac{f^2}{(C_S+C_H)\calE^{\psi_0}[f]})\parallel_{L^{\Psi}}.
\end{eqnarray}
By $(ISO)$, we have
$$\parallel\frac{\psi_0^2 f^2}{(C_S+C_H)\calE^{\psi_0}[f]}\parallel_{L^{\phi}}\leq\frac{C_S}{C_S+C_H}.
$$
On the other hand, by inequality (HI), we get
\begin{eqnarray}
\int_{}\Psi(\Psi^{-1}(\frac{f^2}{(C_S+C_H)\calE^{\psi_0}[f]}))\,dm
=\int_{}\frac{f^2}{(C_S+C_H)\calE^{\psi_0}[f]}dm\leq\frac{C_S}{C_S+C_H}\leq 1,
\end{eqnarray}
yielding
\begin{eqnarray}
\parallel\Psi^{-1}(\frac{f^2}{(C_S+C_H)\calE^{\psi_0}[f]})\parallel_{L^{\Psi}}
\leq 1.
\end{eqnarray}
Finally,  from the definition of the Luxemburg's norm we achieve

\begin{eqnarray}
\parallel f^2\parallel_{L^{\phi_{1}}(\psi_0^2dm)}
\leq 2(C_S+C_H)\calE^{\psi_0}[f],\ \forall\,f\in D(\calE^{\psi_0}[f]),
\end{eqnarray}
and the claim is proved.\\
Now since $\phi_1$ is admissible, using another time \cite[Theorem 3.4]{benamor07}, we derive that the semigroup $S_t:=e^{-tH^{\psi_0}},\ t>0$ is ultracontractive and has an absolutely continuous essentially bounded  kernel $k_t$, furthermore
\begin{eqnarray}
k_t(x,y)=\psi_0(x)\psi_0(y)p_t(x,y),\ a.e..
\end{eqnarray}
By standard way (see \cite[p.112]{davies-book}), we conclude that there is $T>0$ such that, $\forall\,t>T$, 
\begin{eqnarray}
\frac{1}{2}e^{-\lam_0t}\psi_0(x)\psi_0(y)\leq p_t(x,y),\ a.e..
\end{eqnarray}
Integrating on $(0,\infty)$, yields
\begin{eqnarray}
G_X(x,y)\geq\frac{e^{-\lam_0T}}{2\lam_0}\psi_0(x)\psi_0(y),\ a.e.,
\end{eqnarray}
which finishes the proof.
\end{proof}

Through the proof of Proposition \ref{green}, we have  proved that the operator $H$ is in fact intrinsicly ultracontractive.\\
From now on we set $\dot\calE_{\mu}$ the form defined by
\begin{eqnarray*}
D(\dot\calE_{\mu})=\calF,\ \dot\calE_{\mu}[f]=\mathcal{E}[f]-\int f^2\,d\mu,\ \forall\,f\in\calF.
\end{eqnarray*}
Since $\calE$ is a Dirichlet from, then $\dot\calE_{\mu}$ is a {\em semi-Dirichlet form}, i.e.,
\begin{eqnarray}
\forall\,f\in D(\dot\calE_{\mu})\Rightarrow |f|\in D(\dot\calE_{\mu}).
\end{eqnarray}
We will prove in the following lines that the form $\dot\calE_{\mu}$ is closable.\\  Let us stress that since  the measure $\mu$ is not assumed to be a small perturbation we can not conclude directly its closabilty by using the KLMN theorem. To that end we  give first some auxiliary results.\\
We say that a function $u\in\calF_{loc}$ is a supersolution of $H-\mu$ if 
\begin{eqnarray}
\calE(u,f)-\int uf\,d\mu\geq 0,\ \forall\,0\leq f\in\calF_{\rm loc}\cap C_c(X).
\end{eqnarray}
 
\begin{lem} Let $s\geq 0$   q.e.  be a positive supersolution of $H-\mu$. Then
\begin{eqnarray}
s(x)\geq C_G\psi_0(x)\int \psi_0(y)s(y)\,d\mu(y),\ q.e.
\end{eqnarray}
\label{lb-sup}
\end{lem}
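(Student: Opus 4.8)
The plan is to test the supersolution inequality against a well-chosen family of functions built from $\psi_0$ and then to pass to a limit. Since $s$ is a supersolution of $H-\mu$, we have $\calE(s,f)-\int sf\,d\mu\geq 0$ for all $0\leq f\in\calF_{\rm loc}\cap C_c(X)$. The natural candidate is $f$ of the form $f=\psi_0\,g$, so that the inequality becomes, after using the transformed form $\calE^{\psi_0}$ and the product/chain rules for $\Gamma$, an inequality on $L^2(\psi_0^2\,dm)$ for the operator $H^{\psi_0}$. Concretely, I would first establish the identity
\begin{eqnarray}
\calE(s,\psi_0 g)-\int s\psi_0 g\,d\mu = \int g\,d\Gamma^{\psi_0}\big(s/\psi_0,\,\cdot\,\big)\ \text{(a nonnegative-linear functional in }g\geq 0\text{)},
\end{eqnarray}
i.e.\ $\tilde s:=s/\psi_0$ is a nonnegative supersolution for the transformed operator $H^{\psi_0}$, which is (by the proof of Proposition~\ref{green}) nonnegative with $e^{-tH^{\psi_0}}=\psi_0^{-1}e^{-tH}\psi_0$ ultracontractive and with ground state the constant function $1$ (eigenvalue $\lam_0$). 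The main care here is that $s\in\calF_{\rm loc}$ only, so $\tilde s\in D(\calE^{\psi_0})_{\rm loc}$; I would use the truncation property (\ref{truncation}) and locality to justify plugging in $f=\psi_0 g$ for $g\in C_c(X)$ quasi-continuous, reducing everything to bounded compactly supported pieces of $\tilde s$.

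Once $\tilde s$ is known to be a nonnegative supersolution of $H^{\psi_0}$, the resolvent/semigroup comparison does the rest: for $\alpha>0$ the resolvent $(\alpha+H^{\psi_0})^{-1}$ is positivity preserving, and a supersolution $\tilde s$ satisfies $\tilde s\geq \alpha(\alpha+H^{\psi_0})^{-1}\tilde s$, equivalently $e^{-tH^{\psi_0}}\tilde s\leq \tilde s$ for all $t>0$ (in the a.e.\ sense, using that $\tilde s\geq 0$). Now invoke the lower bound on the transformed heat kernel obtained in the proof of Proposition~\ref{green}: there is $T>0$ with $k_t(x,y)\geq \tfrac12 e^{-\lam_0 t}\psi_0(x)\psi_0(y)p_0\cdots$ — more precisely $k_t(x,y)/(\psi_0(x)\psi_0(y)) = p_t(x,y)\geq \tfrac12 e^{-\lam_0 t}$ for $t>T$ by intrinsic ultracontractivity. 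Therefore, for $t>T$,
\begin{eqnarray}
\tilde s(x)\ \geq\ e^{-tH^{\psi_0}}\tilde s\,(x)\ =\ \int k_t(x,y)\tilde s(y)\,\psi_0^2(y)\,dm(y)\ \geq\ \tfrac12 e^{-\lam_0 t}\int \psi_0^2(y)\tilde s(y)\,dm(y).
\end{eqnarray}
Rewriting $\tilde s=s/\psi_0$ and $\psi_0^2\tilde s\,dm=\psi_0 s\,dm$ gives $s(x)\geq \tfrac12 e^{-\lam_0 t}\,\psi_0(x)\int \psi_0 s\,dm$; but I want the integral against $d\mu$, not $dm$. To get that, I instead integrate the supersolution inequality in time directly against the \emph{Green operator} $H^{-1}$ rather than the heat semigroup, or — cleaner — I test the original inequality with $f=\psi_0 G_X(\cdot,y)$-type kernels: since $s$ is a supersolution of $H-\mu$, formally $s\geq H^{-1}(\mu s)$, i.e.\ $s(x)\geq \int G_X(x,y)s(y)\,d\mu(y)$ a.e., and then Proposition~\ref{green} gives $G_X(x,y)\geq C_G\psi_0(x)\psi_0(y)$, whence
\begin{eqnarray}
s(x)\ \geq\ \int G_X(x,y)s(y)\,d\mu(y)\ \geq\ C_G\,\psi_0(x)\int \psi_0(y)s(y)\,d\mu(y)\quad\text{q.e.}
\end{eqnarray}

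So the cleanest route is: (1) show $s\geq H^{-1}(s\,\mu)$ q.e.\ — i.e.\ the supersolution dominates the potential of the measure $s\,\mu$ — by approximating $s$ from below by $s\wedge n$ truncations and using the resolvent identity $s\geq \alpha(\alpha+H)^{-1}s + (\alpha+H)^{-1}(s\mu)$ together with $\alpha(\alpha+H)^{-1}s\to s$ and monotone convergence as $\alpha\to 0$; (2) insert the Green-function lower bound from Proposition~\ref{green}; (3) pull the constant $C_G\psi_0(x)$ out. The main obstacle is step (1): making rigorous sense of $H^{-1}(s\,\mu)$ and the inequality $s\geq H^{-1}(s\,\mu)$ when $s$ is only in $\calF_{\rm loc}$ and $s\,\mu$ need not be a finite-energy measure. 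I would handle this by a double truncation — replace $s$ by $s_n:=s\wedge n$ and $\mu$ by $\mu$ restricted to an increasing exhaustion of $X$ by relatively compact quasi-open sets on which $s_n\mu$ has finite energy — check that each $s_n$ restricted appropriately is still a supersolution of $H-\mu$ on that piece (using (\ref{truncation}) and that truncation decreases energy), apply the clean resolvent argument there to get $s_n(x)\geq \int G_X(x,y)s_n(y)\,d\mu(y)$ on the exhausting set, and finally let $n\to\infty$ and the exhaustion increase, using monotone convergence on the right and $s_n\uparrow s$ q.e.\ on the left. Irreducibility of $\calE$ (Theorem~\ref{spec-prop1}) guarantees $\int\psi_0 s\,d\mu>0$ unless $s\,\mu\equiv 0$, so the bound is nontrivial exactly when it should be.
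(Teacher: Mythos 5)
Your final route is essentially the paper's: the crux is to establish the domination
$s\geq K^{\mu}s:=\int G_X(\cdot,y)s(y)\,d\mu(y)$ q.e., after which the Green-kernel lower bound of Proposition~\ref{green} gives the claim by pulling $C_G\psi_0(x)$ out of the integral. Where you differ is how that domination is obtained. The paper argues locally: for a fixed nonnegative test function $f$ with support $U$, it picks a global $u\in\calF$ with $u=s$ q.e.\ on $U$, rewrites the supersolution inequality as $\calE(u-K^{\mu}u,f)\geq 0$, concludes that $u-K^{\mu}u$ is a potential and hence $\geq 0$ q.e., and then exhausts $X$ by compacts $U_k$ with a monotone sequence $(u_k)$. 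You instead propose truncation of $s$ together with the resolvent inequality $s\geq\alpha(\alpha+H)^{-1}s+(\alpha+H)^{-1}(s\mu)$ and $\alpha\to 0$; this is a legitimate alternative way of expressing the same ``supersolutions dominate the potential'' fact, though it needs the same care as the paper's exhaustion to make $s\mu$ a finite-energy object locally. There is one outright error in your write-up of the limit: as $\alpha\to 0$ one has $\alpha(\alpha+H)^{-1}s\to 0$ (by transience/invertibility of $H$), not $\to s$ as you state — indeed, if both limits you wrote held, the inequality would give $s\geq s+K^{\mu}s$, i.e.\ $K^{\mu}s\leq 0$, which is absurd. The argument is salvaged simply by discarding the nonnegative term $\alpha(\alpha+H)^{-1}s$ and using monotone convergence on $(\alpha+H)^{-1}(s\mu)\uparrow K^{\mu}s$. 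Finally, the long opening paragraph (the Doob transform and heat-kernel lower bound) is a dead end that you correctly abandon: it produces a lower bound involving $\int\psi_0 s\,dm$ rather than $\int\psi_0 s\,d\mu$, which is not what the lemma asserts; it could have been deleted entirely.
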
 

\begin{proof} Let  $f\in\calF\cap C_c(X)$ be nonnegative. Set $U={\rm supp}\,f$ and let $u\in\calF$ be such that $u=s$ q.e. on $U$ (such $u$ exists because $s\in\calF_{\rm loc}$). Since $|u|\in \calF$ and $|u|=u=s$  q.e. on $U$ ($s\geq 0$ q.e.), we may and do suppose that $u\geq 0$ q.e. Owing to the definition of $s$ we derive
\begin{eqnarray}
0\leq\calE(s,f)-\int sf\,d\mu=\calE(u,f)-\int uf\,d\mu=\calE(u,f)-\calE(K^{\mu}u,f),
\end{eqnarray}
where
\begin{eqnarray}
 K^{\mu}u:=\int G_X(\cdot,y)u(y)\,d\mu(y),
\label{potentialoperatot}
\end{eqnarray}
is the potential of the measure $u\mu$. Thus $u-K^{\mu}u$ is a potential, obtaining thereby that  $u-K^{\mu}u\geq 0$ q.e.. Since $u=s$ q.e. on $U$ and $u$ is positive q.e., and whence $\mu$ a.e., we get with the help of the lower bound for the Green function in term of the ground state $\psi_0$ (see Prop.\ref{green})
\begin{eqnarray}
s(x)\geq C_G\psi_0(x)\int\psi_0(y)u(y)\,d\mu(y),\ {\rm q.e.\ On}\ U.
\end{eqnarray}
Now let $(U_k)$ be a sequence of compact sets exhausting $X$ and $(u_k)\subset\calF$ such that $u_k\geq 0$ q.e. and $u_k=s$ q.e. on $U_k$ for every integer $k$. Since $U_k\subset U_l,\ \forall\,l\geq k$, we get $u_k=u_l$ q.e. On $U_k,\ \forall\,l\geq k$. Furthermore $u_k\uparrow s$ q.e. So that the estimate established above yields
\begin{eqnarray}
s(x)\geq C_G\psi_0(x)\int\psi_0(y)u_l(y)\,d\mu(y),\ {\rm q.e.\ On}\ U_k\ \forall\,l\geq k.
\end{eqnarray}
Passing to the limit w.r.t. $l$ yields
\begin{eqnarray}
s(x)\geq C_G\psi_0(x)\int\psi_0(y)s(y)\,d\mu(y),\ {\rm q.e.\ On}\ U_k\ \forall\,k.
\end{eqnarray}
Regarding $(U_k)$ exhausts $X$, the lemma is proved.
\end{proof}

Let $s>0$   q.e. be a supersolution of $H-\mu$ (such an $s$ exists by assumption (SUP)).  As a second step toward proving the closability of the form $\dot\calE_{\mu}$ we will prove that the $s$-transform of $\dot\calE_{\mu}$ is in fact a pre-Dirichlet form.\\
We designate by $\dot\calE_{\mu}^s$ (the $s$-transform of $\dot\calE_{\mu}$) the form defined by
\begin{eqnarray}
D(\dot\calE_{\mu}^s):=\calF^s=\{f\colon\,sf\in\calF\}\subset L^2(s^2dm),\ \dot\calE_{\mu}^s[f]=\dot\calE_{\mu}[sf],\ \forall\,f\in\calF^s.
\end{eqnarray}
The following result was mentioned in \cite{fitz00} with a probabilistic proof.  For the convenience of the reader we will give an alternative analytic proof.\\

\begin{lem} The form $\dot\calE_{\mu}^s$ is a pre-Dirichlet form in  $L^2(s^2dm)$. It follows in particular that $\dot\calE_{\mu}$ is a closable and its closure is a  semi-Dirichlet form.
\label{closability}
\end{lem}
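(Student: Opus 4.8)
The plan is to use the supersolution $s$ to perform a ground‑state (Doob‑type) transform that trades the critically perturbed form $\dot\calE_\mu$ for a form whose Dirichlet structure is visible. Observe first that $f\mapsto sf$ is a bijective isometry of $L^2(s^2dm)$ onto $L^2$ which carries $\calF^s$ onto $\calF$ and $\dot\calE_\mu^s$ onto $\dot\calE_\mu$; hence $\dot\calE_\mu$ and $\dot\calE_\mu^s$ are closable simultaneously and with corresponding closures, and it is enough to work with $\dot\calE_\mu^s$ in $L^2(s^2dm)$. The advantage is that $\dot\calE_\mu^s$ will be fully Markovian (every normal contraction operates), while this isometry transports only $f\mapsto|f|$ (since $s\ge0$), which is exactly why the closure of $\dot\calE_\mu$ will be merely a semi‑Dirichlet form.

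Next I would convert (SUP) into an equality: the functional $f\mapsto\calE(s,f)-\int sf\,d\mu$ is nonnegative on $\calF\cap C_c(X)$, so by positivity and the regularity of $\calE$ it is represented by a positive Radon measure $\kappa$ charging no set of zero capacity, $\calE(s,f)-\int sf\,d\mu=\int f\,d\kappa$. Put $\nu:=s\,\kappa$ (using the quasi‑continuous version of $s$), a positive smooth measure. The heart of the proof is the energy identity
\begin{equation}
\dot\calE_\mu^s[f]=\int s^2\,d\Gamma[f]+\int f^2\,d\nu ,
\end{equation}
which I would establish first for bounded $f\in\calF^s$ with $sf$ of compact support (a class whose image under $f\mapsto sf$ contains $\calF_b\cap C_c(X)$, hence dense): assuming moreover $s$ bounded, the product formula (\ref{product}) gives $d\Gamma[sf]=s^2\,d\Gamma[f]+d\Gamma(s,sf^2)$, while (\ref{energy'}) and the defining relation for $\kappa$ applied to $sf^2\in\calF\cap C_c(X)$ give $\int d\Gamma(s,sf^2)=\calE(s,sf^2)=\int sf^2\,d\kappa+\int s^2f^2\,d\mu$; subtracting $\int s^2f^2\,d\mu=\int(sf)^2\,d\mu$ yields the identity. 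The general, possibly unbounded, $s$ is then reached by replacing $s$ with $s\wedge n$, using strong locality in the form $1_{\{s<n\}}\,d\Gamma[s\wedge n]=1_{\{s<n\}}\,d\Gamma[s]$, and passing to the limit by monotone convergence; this bookkeeping is the most delicate part of the computation.

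From the identity, positivity of $\dot\calE_\mu^s$ is immediate (both terms are $\ge0$ since $\nu\ge0$), which incidentally reproves Hardy's inequality (\ref{hardy}); and the Markovian property follows because the unit contraction $g:=0\vee f\wedge1$ satisfies $d\Gamma[g]\le d\Gamma[f]$ as measures, by the truncation property (\ref{truncation}), and $g^2\le f^2$ pointwise, so $\dot\calE_\mu^s[g]\le\dot\calE_\mu^s[f]$ (one checks $g\in\calF^s$ by the standard stability of $\calF$ and $\calF_{\rm loc}$ under normal contractions). The remaining — and principal — point is closability: the identity exhibits $\dot\calE_\mu^s$ as the weighted strongly local energy form $f\mapsto\int s^2\,d\Gamma[f]$ on $L^2(s^2dm)$ perturbed by the killing $\int f^2\,d\nu$ of the smooth measure $\nu$; since $\calE$ is closed and $s$ is a positive supersolution lying in $\calF_{\rm loc}$, the weighted energy part is a closable (Doob‑type) transform of $\calE$, and a perturbation by a smooth measure (one that, like $\kappa$, charges no polar set) preserves closability. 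This is precisely the statement for which \cite{fitz00} gives a probabilistic argument; supplying the analytic version here — together with the truncation argument behind the energy identity — is where I expect the real work to lie. Once $\dot\calE_\mu^s$ is shown closed and Markovian, its closure is a Dirichlet form on $L^2(s^2dm)$; transporting back along $f\mapsto sf$ shows $\dot\calE_\mu$ is closable, and since only the modulus contraction is transported, a routine approximation plus lower‑semicontinuity argument shows its closure is a semi‑Dirichlet form.
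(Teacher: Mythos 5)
Your proposal follows essentially the same route as the paper's proof. You establish the same key energy identity $\dot\calE_\mu^s[f]=\int s^2\,d\Gamma[f]+\int f^2\,d\nu$ (the paper writes the middle step explicitly before recognizing the cross terms plus the $\mu$-term as a positive smooth measure, which it calls $\tilde\mu$ — your $\nu=s\kappa$ is the same object), you reduce closability to (a) closability of the weighted local part $Q[f]=\int s^2\,d\Gamma[f]$ and (b) stability under perturbation by a positive smooth measure, exactly as the paper does citing Fitzsimmons' Theorem 3.10 and Stollmann, and you obtain the semi-Dirichlet property of the closure of $\dot\calE_\mu$ by transporting back along $f\mapsto sf$. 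The only bookkeeping difference is that you handle possibly unbounded $s$ by truncating $s\wedge n$, whereas the paper sidesteps this by working on the core $\calC^s=\{f\in\calF_b:\,f\in L^2(s^2dm),\,f\in L^2(\Gamma[s]),\,s\in L^2(\Gamma[f])\}$, which builds the required integrability into the domain; and for the Markovian property of the closure the paper uses semigroup domination $0\le e^{-tL_S}\le e^{-tL_Q}$ rather than the direct normal-contraction check on a core. Both variants are fine.
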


\begin{proof} Following Fitzsimmons \cite{fitz08}, we set 
\begin{eqnarray}
\calC^s:=\{f\colon\,f\in\calF_b,\  f\in L^2(s^2dm),\ f\in L^2(\Gamma[s]),\,s\in L^2(\Gamma[f])\}\subset L^2(s^2dm)
\end{eqnarray}
and $Q$ the form defined by
\begin{eqnarray}
Q:=D(Q)=\calC^s,\ Q[f]=\int s^2d\Gamma[f],\ \forall\,f\in D(Q).
\end{eqnarray}
We claim first, that for every $f\in\calC^s$, $sf\in\calF$ (so that $f\in\calF^s$) and 
\begin{eqnarray}
\dot\calE_{\mu}^s[f]=Q[f]+2\int sf\,d\Gamma(s,f)+\int f^2\,d\Gamma[s]-\int f^2s^2\,d\mu.
\end{eqnarray}
Indeed, let $f\in\calC^s$. Then $sf\in\calF_{loc}$ and by the chain rule we get for every open bounded subset $U\subset X$,
\begin{eqnarray}
\int_U\,d\Gamma[sf]=\int_U s^2\,d\Gamma[f]+2\int_U sf\,d\Gamma(s,f)+\int_U f^2\,d\Gamma[s].
\end{eqnarray} 
Owing to the properties of $f$, and exhausting $X$ by open subsets, we get by Schwartz's inequality together with 
monotone convergence
\begin{eqnarray}
\calE_{\mu}[sf]&=&\int\,d\Gamma[sf]= Q[f]+2\int sf\,d\Gamma(s,f)+\int f^2\,d\Gamma[s]\nonumber\\
&\leq& Q[f]+2(\int f^2\,d\Gamma[s])^{1/2}(\int s^2\,d\Gamma[f])^{1/2} +\int f^2\,d\Gamma[s] <\infty,
\end{eqnarray}
yielding that $sf\in\calF$ and  the corresponding formula for $\dot\calE_{\mu}^s[f]$.\\
As a second step we define another form, which we denote by $q$,  as follows
\begin{eqnarray}
D(q)=\calC^s,\ q[f]=2\int sf\,d\Gamma(s,f)+\int f^2\,d\Gamma[s]-\int f^2s^2\,d\mu,\ \forall\,f\in D(q).
\end{eqnarray}
Then $q$ is well defined. Since for every $f\in\calC^s$ also  $f^2\in\calC^s$, we get by the preceding step that $sf^2\in\calF$. Thus, owing to the fact that $s$ is a supersolution  we obtain

\begin{eqnarray}
q[f]=\calE(s,sf^2)-\int s(sf^2)\,d\mu\geq 0,\ \forall\,f\in\calC^s.
\end{eqnarray}

We shall prove that there is a positive measure, $\tilde\mu$ charging no  set having zero capacity  such that 
$$ 
q[f]=\int f^2\,d\nu,\ \forall\,f\in \calC^s .
$$
Let $f\in\calC^s$,  having compact support and $f\geq 0$\ a.e. . Set 
\begin{eqnarray}
L(f):= \calE(s,sf)-\int s(sf)\,d\mu=\int \,d\Gamma(s,sf)-\int s(sf)\,d\mu\geq 0,
\end{eqnarray}
because $s$ is a supersolution. Since $f\mapsto d\Gamma(s,sf)$ is a Radon measure charging no set having zero capacity, we derive that $L$ is actually a positive Radon measure charging no set having zero capacity: There is a positive Radon measure $\tilde\mu$, charging no set having zero capacity such that
\begin{eqnarray}
L(f)=\int f\,d\tilde\mu.
\end{eqnarray}  
Observing that $L(f^2)=q[f]$ we get $q[f]=\int f^2\,d\tilde\mu$, for every $f\in\calC^s$  having compact support  and whence for every  $f\in\calC^s$.

Now Set 
\begin{eqnarray}
S:=Q+q.
\end{eqnarray}
Then $S$ coincides with $\dot\calE_{\mu}^s$ restricted to $\calC^s$.\\
On one hand according to \cite[Theorem 3.10]{fitz08}, the form $Q$ is closable and its closure $\overline{Q}$ is a Dirichlet form having the strong local property. On the other hand since the measure $q$ is positive and absolutely continuous w.r.t. the capacity, then according to \cite{stollmann92}, the form $S$ is closable, yielding the closability of $\dot{\calE_{\mu}^s}$ and whence of $\dot{\calE_{\mu}}$. The fact that the closure of $\dot\calE_{\mu}$ is a semi-Dirichlet form is derived from the fact that $\calE_{\mu}$ is itself a semi-Dirichlet form.\\
Let us denote by $\bar{S}$, respectively $\bar{Q}$ the closure of $S$, respectively of $Q$ and by $L_S$, respectively $L_Q$ the selfadjoint operator associated to $S$, respectively $Q$. Then since $\bar{S}\geq\bar{Q}\geq 0$  we derive that
\begin{eqnarray}
0\leq e^{-t L_S}\leq e^{-t L_Q},\ \forall\,t>0.
\end{eqnarray}
Owing to the fact that $\bar{Q}$ is a Dirichlet form we get that the operator $e^{-t L_Q}$ is Markovian for every $t>0$, and whence $e^{-t L_S},\ t>0$ is Markovian as well or equivalently $\bar{S}$ is a Dirichlet form. Clearly $\bar{S}$ is local and the proof is finished.\\
\end{proof}

We  quote that the improved Sobolev-Orlicz inequality (ISO) has no relevance for the closability of the form $\dot\calE_{\mu}$.\\
From now on we denote by $\calE_{\mu}^s$, respectively $\calE_{\mu}$, the closure of  $\dot\calE_{\mu}^s$, respectively of $\dot\calE_{\mu}$. Actually, we deduce  from the last proof that since $\calC^s $ is a common core for both $\bar{S}$ and $\calE_{\mu}^s$, then $\bar{S}=\calE_{\mu}^s$.\\ 
The form $\calE_{\mu}$ is a densely defined nonnegative form, and is even a semi-Dirichlet form. Let $H_{\mu}$ be the self-adjoint operator associated to $\calE_{\mu}$. Then $H_{\mu}$ is positivity preserving and by inequality (ISO) is invertible with bounded inverse, which we denote by $H_{\mu}^{-1}$. Henceforth we denote by $H_{\mu}^s$ the operator related to the form $\calE_{\mu}^s$ and by $e^{-tH_{\mu}},\ t>0$, respectively $T_t^s:=e^{-tH_{\mu}^s},\ t>0$ the semigroup of operators related to $H_{\mu}$, respectively $H_{\mu}^s$.\\

\begin{theo} Let $s$ be a function satisfying assumption (SUP). Then for every $t>0$, the operator $T_t^s$ is a Hilbert-Schmidt operator. It follows, in particular  that $e^{-tH_{\mu}},\ t>0$ is a Hilbert-Schmidt operator as well and the operator $H_{\mu}$ has a compact resolvent.
\label{compact-resolvent} 
\end{theo}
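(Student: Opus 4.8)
\emph{Sketch of the argument.} The plan is to reduce everything to proving that $T_t^s$ is Hilbert--Schmidt, and to get the latter exactly as in Theorem~\ref{spec-prop1}, i.e. by first establishing that $T_t^s$ is ultracontractive. Note that the map $U\colon L^2(s^2dm)\to L^2$, $Uf=sf$, is unitary and conjugates $H_\mu^s$ to $H_\mu$ (this is built into the construction of the $s$-transform: $\calE_\mu^s[f]=\calE_\mu[sf]$ with $sf\in D(\calE_\mu)$), so that $e^{-tH_\mu}=Ue^{-tH_\mu^s}U^{-1}$; since the Hilbert--Schmidt property is stable under unitary equivalence, once $T_t^s$ is Hilbert--Schmidt so is $e^{-tH_\mu}$, and then (the semigroup being compact for every $t>0$) $H_\mu$ has compact resolvent. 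Moreover $s\in L^2$ forces $s^2dm$ to be a \emph{finite} measure, so — arguing as in Theorem~\ref{spec-prop1} — it is enough to show that $T_t^s$ is ultracontractive: then $T_{t/2}^s\colon L^2\to L^\infty$, the kernel of $T_t^s$ is essentially bounded, hence belongs to $L^2(s^2dm)\otimes L^2(s^2dm)$, and $T_t^s$ is Hilbert--Schmidt.

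To prove ultracontractivity of $T_t^s$ I would mimic the proof of Proposition~\ref{green}, replacing $\psi_0$ by $s$ and $\calE^{\psi_0}$ by $\calE_\mu^s$: by \cite[Theorem~3.4]{benamor07} and the admissibility of $\phi_1$ it suffices to establish the Sobolev--Orlicz inequality
\[
\|f^2\|_{L^{\phi_1}(s^2dm)}\le C\,\calE_\mu^s[f],\qquad f\in D(\calE_\mu^s).
\]
The Hölder--duality computation of Proposition~\ref{green} reduces this to two inputs. First, since $\calE_\mu^s[f]=\calE_\mu[sf]$ and $(ISO)$ passes to the closure $\calE_\mu$ by lower semicontinuity of the Luxemburg norm, one gets $\|(sf)^2\|_{L^\Phi}\le C_S\,\calE_\mu^s[f]$, which controls the $L^\Phi$-factor. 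Second, one needs a Hardy-type inequality $\int f^2\,dm\le C'\,\calE_\mu^s[f]$ for the $L^\Psi$-factor. Here Lemma~\ref{lb-sup} is used: it gives $s\ge\big(C_G\!\int\!\psi_0 s\,d\mu\big)\psi_0$ q.e. with a finite, strictly positive constant, whence, by $(HI)$,
\[
\int f^2\,dm=\int\frac{(sf)^2}{s^2}\,dm\le\Big(C_G\!\int\!\psi_0 s\,d\mu\Big)^{-2}\int\frac{(sf)^2}{\psi_0^2}\,dm\le C''\,\calE[sf].
\]

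The hard part is the remaining step: passing from $\calE[sf]$ to $\calE_\mu[sf]=\calE[sf]-\int s^2f^2\,d\mu=\calE_\mu^s[f]$, i.e. absorbing the perturbation $\mu$. This amounts to an \emph{improved} Hardy inequality $\int g^2\,d\mu\le\theta\,\calE[g]$ with a constant $\theta<1$ uniform over $g\in\calF$, upgrading the non-strict Hardy inequality \eqref{hardy} coming from $(SUP)$. I would derive it by combining $(ISO)$ with the compactness of the embedding $\calF\hookrightarrow L^2$ (available by Theorem~\ref{spec-prop1}): if $\theta=1$ there would be $g_n\in\calF$ with $\calE[g_n]=1$ and $\calE_\mu[g_n]\to0$; then $(ISO)$ forces $\|g_n^2\|_{L^\Phi}\to0$, hence $g_n\to0$ in $L^2$, and a lower-semicontinuity argument for the closed form $\calE_\mu$ yields a contradiction. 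A possibly cleaner alternative is to transform by $\psi_0$ instead of $s$: then $H_\mu$ is unitarily equivalent to the operator on $L^2(\psi_0^2dm)$ associated with $\calE^{\psi_0}[\cdot]-\int(\cdot)^2\,d(\psi_0^2\mu)$, where $\calE^{\psi_0}$ is the intrinsically ultracontractive form of Proposition~\ref{green} and $\psi_0^2\mu$ is a \emph{finite} measure (its mass is $\le\calE[\psi_0]$ by \eqref{hardy}) charging no polar set; one then shows that such a measure is an infinitesimally form-bounded — in fact form-compact — perturbation of $\calE^{\psi_0}$, which preserves the Hilbert--Schmidt property of the semigroup and, a fortiori, the compactness of the resolvent. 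Either way this improved-Hardy / form-compactness point is the crux; everything else is bookkeeping built on Proposition~\ref{green} and Lemma~\ref{closability}.
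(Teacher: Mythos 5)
Your overall architecture is the paper's: reduce to showing $T_t^s$ is Hilbert--Schmidt via ultracontractivity, which in turn follows from a Sobolev--Orlicz inequality $\|f^2\|_{L^{\phi_1}(s^2dm)}\le C\,\calE_\mu^s[f]$, and transfer back by the similarity $e^{-tH_\mu}=sT_t^ss^{-1}$. You also correctly isolate the two ingredients of the H\"older--duality computation: an $L^\Phi$-bound (from (ISO)) and a Hardy-type bound $\int f^2\,dm\le C'\calE_\mu^s[f]$. Up to that point you are tracking the paper.

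The gap is in how you propose to get the Hardy-type bound. Your main route is to upgrade \eqref{hardy} to an \emph{improved} Hardy inequality $\int g^2\,d\mu\le\theta\,\calE[g]$ with $\theta<1$. This is false in exactly the situations the paper is designed to cover: the whole point of assumption (SUP), and the ``almost equivalence'' discussion around \eqref{reciproque}, is that the best Hardy constant may equal $1$ and may not be improvable --- see the final example with $d\mu=\tfrac14\rho^{-2}dx$, where the constant $\tfrac14$ is sharp and no $\theta<1$ exists. Your proposed compactness proof also does not close: from $\calE[g_n]=1$, $\calE_\mu[g_n]\to0$, (ISO) gives $\|g_n^2\|_{L^\Phi}\to0$ hence $g_n\to0$ in $L^2$, but lower semicontinuity of $\calE_\mu$ then only yields $\liminf\calE_\mu[g_n]\ge0$, which is not a contradiction; $g_n\rightharpoonup 0$ with $\calE[g_n]=1$ is perfectly consistent. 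Your alternative route via form-compactness of $\psi_0^2\mu$ relative to $\calE^{\psi_0}$ is plausible but is neither proved nor the paper's argument, and finiteness of $\psi_0^2\mu$ alone does not give form-compactness.

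What the paper actually uses (implicitly, by reference to Proposition~\ref{green} and the machinery of Lemmas~\ref{closability} and~\ref{L2-estimate}) is the decomposition $\calE_\mu^s=Q+q$ established in the closability proof, where $Q[f]=\int s^2\,d\Gamma[f]$ and $q\ge0$ because $s$ is a supersolution; hence $\calE_\mu^s[f]\ge\int s^2\,d\Gamma[f]$ with no constant lost. One then applies (HI) to $u=\psi_0 f$, uses the product rule and the eigenequation $\int d\Gamma(\psi_0,\psi_0 f^2)=\lam_0\int f^2\psi_0^2\,dm$ to get $\int f^2\,dm\le C_H\int\psi_0^2\,d\Gamma[f]+C_H\lam_0\int\psi_0^2 f^2\,dm$, and finally replaces $\psi_0$ by $s$ via Lemma~\ref{lb-sup}. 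The first term is then $\le C\,\calE_\mu^s[f]$ by the decomposition, and the second is $\le C\,\calE_\mu^s[f]$ by H\"older and (ISO), as in \eqref{holder}. This is how the paper absorbs $\mu$ without any strict form-bound $\theta<1$; that step is the piece missing from your proposal.
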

Let us emphasize that the latter theorem is the only place where we used the supplementary assumption $s\in L^2$.

\begin{proof} By similar arguments to those used in the proof of Prop.\ref{green}, we derive that there is a finite constant $C>0$ such that 
\begin{eqnarray}
\parallel f^2\parallel_{L^{\phi_{1}}(s^2dm)}
\leq C\calE_{\mu}^s[f],\ \forall\,f\in{\cal{C}}_2^s.
\end{eqnarray}
Having in mind that ${\cal{C}}_2^s$ is a core for $\calE_{\mu}^s$, the latter inequality extends to every element from the space $\calF^s$.   Since $\phi_1$ is admissible, $\calE_{\mu}^s$ is a Dirichlet form (by Lemma \ref{closability}) and $s\in L^2$, we get according to \cite[Theorem 3.4]{benamor07} that $T_t^s$ is a Hilbert-Schmidt operator for every $t>0$. Now the rest of the proof follows directly by realizing that $e^{-tH_{\mu}}=sT_t^ss^{-1}$.

\end{proof}

From now on we denote by $\lam_0^{(\mu)}$ the smallest eigenvalue of the operator $H_{\mu}$. 
 We proceed to prove that $\lam_0^{(\mu)}$ is nondegenerate, i.e. the associate eigenspace has dimension one and may be generated by a nonnegative eigenfunction. To that end we shall approximate the operator $H_{\mu}$, in the norm resolvent sense, by a sequence of operators having the mentioned property.\\
Let  $(\mu_k)$ be an increasing sequence of positive measures charging no sets having zero capacity such that $\mu_k\uparrow\mu$ and there is a constant $0<\kappa_k<1$ such that for every $k\in\N$ we have
\begin{eqnarray}
\int_{}f^{2}\,d\mu_k \leq\kappa_k\calE[f],\ \forall\,f\in \mathcal{F}_.
\label{infinitesimal}
\end{eqnarray}
For example the sequence $\mu_k=(1-\frac{1}{k})\mu$ satisfy the above conditions.\\
By the assumption $0<\kappa_k<1$, we conclude that the  following forms
\begin{eqnarray*}
D(\calE_{\mu_k})=\calF,\ \calE_{\mu_k}[f]=\mathcal{E}[f]-\int_{\Om}f^2\,d\mu_k,\ \forall\,f\in\calF,
\end{eqnarray*}
are closed in $L^2$. For every integer $k$, we shall designate by $H_k$ the self-adjoint operator related to $\calE_{\mu_k}$.\\
According to general results about  convergence of sequences of monotone quadratic forms (see \cite{kato}), one can realize that $H_k\to H_{\mu}$, in the strong resolvent sense as $k\to\infty$. We shall improve this observation in the following way:

\begin{lem} The operators $H_k$ have compact resolvents and 
\begin{eqnarray}
\lim_{k\to\infty}\|H_{k}^{-1}-H_{\mu}^{-1}\|=0.
\end{eqnarray}
\label{norm-conv}
\end{lem}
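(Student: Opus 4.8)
The plan is to establish norm-resolvent convergence $\|H_k^{-1}-H_\mu^{-1}\|\to 0$ by combining the already-known strong resolvent convergence with a compactness argument, and to establish the compactness of the resolvents of $H_k$ by the same ultracontractivity route used for $H_\mu$ in Theorem \ref{compact-resolvent}. First I would treat the compactness claim. Since $0<\kappa_k<1$, the form $\calE_{\mu_k}$ is closed and, because $\mu_k\le\mu$, one has $\calE_{\mu_k}[f]\ge\dot\calE_\mu[f]$, so $(ISO)$ for $\calE-\mu$ immediately yields $\|f^2\|_{L^\Phi}\le C_S\,\calE_{\mu_k}[f]$ for all $f\in\calF$, with the same constant $C_S$, uniformly in $k$. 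By Lemma \ref{inclusion} this gives a uniform $L^{\phi_1}$-bound, and since $\phi_1$ is admissible, \cite[Theorem 3.4]{benamor07} applies (no $s$-transform is needed here because $\calE_{\mu_k}$ is already a Dirichlet form on $L^2$) to give ultracontractivity of $e^{-tH_k}$; as $m(X)<\infty$, $e^{-tH_k}$ is Hilbert--Schmidt and $H_k$ has compact resolvent. Moreover the ultracontractivity bound $\|e^{-tH_k}\|_{1\to\infty}\le c(t)$ can be taken \emph{uniform in $k$}, because it depends only on $C_S$ and $\Phi$ via the Nash/Orlicz argument of \cite{benamor07}.

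Next, for the norm convergence, the key point is a uniform Hilbert--Schmidt (or, more conveniently, uniform ``collective compactness'') bound. From the uniform ultracontractivity of $e^{-tH_k}$ and $m(X)<\infty$ one gets $\|e^{-tH_k}\|_{\mathrm{HS}}\le c(t)\,m(X)^{1/2}$ uniformly in $k$; integrating, $\|H_k^{-1}\|_{\mathrm{HS}}\le C$ uniformly, and similarly for $H_\mu^{-1}$. Now I would argue as follows: the family $\{H_k^{-1}\}_k$ is bounded in the Hilbert--Schmidt norm, hence relatively compact in the operator norm topology (a bounded set in $\mathcal{S}_2$ is, up to subsequences, norm-convergent after passing through weak-$\mathcal{S}_2$ limits — more precisely, any weakly-$\mathcal{S}_2$-convergent sequence of operators that is ``uniformly small at infinity'' in the sense supplied by a common ultracontractive kernel bound converges in operator norm). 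On the other hand $H_k^{-1}\to H_\mu^{-1}$ strongly, by the monotone form convergence $\calE_{\mu_k}\uparrow\calE_\mu$ (here $\dot\calE_{\mu_k}$ increases to $\dot\calE_\mu$ and they share the core $\calF$, so Kato's monotone convergence theorem \cite{kato} gives strong resolvent convergence). A strongly convergent sequence which is norm-relatively compact converges in norm, and the limit must be $H_\mu^{-1}$; this is the conclusion.

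Concretely, rather than invoking an abstract ``norm-relative compactness'' lemma, I would make the last step quantitative using the kernels. Write $H_k^{-1}=\int_0^\infty e^{-tH_k}\,dt$ with kernel $g_k(x,y)=\int_0^\infty p_t^{(k)}(x,y)\,dt$, and similarly $G_X^{(\mu)}$ for $H_\mu^{-1}$. Split $H_k^{-1}-H_\mu^{-1}=\int_0^\delta(\cdots)\,dt+\int_\delta^\infty(\cdots)\,dt$. For the first piece, the uniform ultracontractivity bound $\|e^{-tH_k}\|_{2\to\infty}\le c(t)$ with $\int_0^\delta c(t)^2\,dt\to 0$ as $\delta\to 0$ controls the $\mathcal{S}_2$-norm of $\int_0^\delta e^{-tH_k}\,dt$ uniformly in $k$ (and likewise for $H_\mu$), making this contribution $\le\varepsilon$ for $\delta$ small, uniformly in $k$. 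For the tail $\int_\delta^\infty$, fix $\delta$; on $[\delta,\infty)$ the semigroups satisfy $\|e^{-tH_k}-e^{-tH_\mu}\|\le\|e^{-\delta H_k}-e^{-\delta H_\mu}\|\cdot e^{-(t-\delta)\lam_0^{(k)}}$-type estimates, and since $e^{-\delta H_k}\to e^{-\delta H_\mu}$ strongly while these operators are, via the factorization through the compact $e^{-\delta H_k/2}$, norm-convergent, and the exponential tail is uniformly integrable (the bottom eigenvalues $\lam_0^{(k)}$ are bounded below, e.g. by $0$ after using $(ISO)$ to get a uniform spectral gap away from $0$), the tail tends to $0$ in operator norm as $k\to\infty$. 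Combining, $\limsup_k\|H_k^{-1}-H_\mu^{-1}\|\le 2\varepsilon$ for every $\varepsilon$, which proves the claim.

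The main obstacle is the passage from strong to norm convergence of the resolvents: strong form convergence only gives strong resolvent convergence in general, and one genuinely needs a compactness input. The cleanest way to supply it is the \emph{uniform} ultracontractivity estimate for the approximating family $\{H_k\}$, i.e. the observation that the constant in $(ISO)$ and hence in \cite[Theorem 3.4]{benamor07} does not deteriorate as $k\to\infty$ because $\calE_{\mu_k}[\,\cdot\,]\ge\dot\calE_\mu[\,\cdot\,]$; this gives a $k$-independent kernel bound on $p_t^{(k)}$, and everything else is a bookkeeping of the short-time/long-time split above.
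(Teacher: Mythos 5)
Your proposal takes a genuinely different and much heavier route than the paper, and it has gaps that would need to be filled. The paper's proof is one line: since $\mu_k\leq\mu$, the closed forms satisfy $\calE_{\mu_k}\geq\calE_{\mu}$, hence $0\leq H_k^{-1}\leq H_{\mu}^{-1}$ as operators. Compactness of $H_k^{-1}$ is then immediate (a positive self-adjoint operator dominated in the operator order by a compact positive operator is compact), and the norm convergence follows from the operator Dini theorem: the differences $H_{\mu}^{-1}-H_k^{-1}$ form a decreasing sequence of nonnegative operators, all dominated by the compact $H_{\mu}^{-1}$, converging strongly to $0$, hence they converge to $0$ in norm (this is the content of the cited Kato theorem). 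You never exploit the crucial operator inequality $0\leq H_k^{-1}\leq H_\mu^{-1}$, which is what makes everything elementary.

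The concrete gaps in your argument are the following. First, ``bounded in Hilbert--Schmidt norm hence relatively compact in the operator norm topology'' is false as a general statement, as you yourself half-acknowledge; the parenthetical repair (``uniformly small at infinity'') is not made precise and cannot be, in this generality. Second, the short-time piece of your split requires $\int_0^{\delta}\|e^{-tH_k}\|_{\mathrm{HS}}\,dt$ to be small uniformly in $k$ for small $\delta$; since the ultracontractivity bound $c(t)$ typically blows up like a negative power of $t$ as $t\to 0$, the integral $\int_0^{\delta}c(t)^2\,dt$ (or the corresponding HS-norm integral) may simply diverge — it is not part of the framework that $H_k^{-1}$ is Hilbert--Schmidt at all, only that $e^{-tH_k}$ is, for each $t>0$. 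Third, in the long-time piece you assert that $e^{-\delta H_k}\to e^{-\delta H_\mu}$ in operator norm ``via the factorization through the compact $e^{-\delta H_k/2}$'', but strong convergence of $A_k$ plus compactness of the fixed operator $e^{-\delta H_\mu/2}$ does not yield norm convergence of a product $A_kB_k$ when both factors vary with $k$; what would finish this step is again precisely the monotonicity $e^{-\delta H_k}\leq e^{-\delta H_\mu}$ and Dini, i.e., the observation you skipped. Your compactness argument for each individual $H_k$ via $\calE_{\mu_k}\geq\dot\calE_\mu$, the inherited (ISO), and \cite[Theorem 3.4]{benamor07} is correct, but it rebuilds from scratch what the domination $0\leq H_k^{-1}\leq H_\mu^{-1}$ gives for free once Theorem \ref{compact-resolvent} is known.
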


\begin{proof} Observe that $0\leq H_k^{-1}\leq H_{\mu}^{-1}$. Now  the first statement follows from the fact that $H_{\mu}^{-1}$  is  compact  and the second one   follows from the known fact that $H_{\mu}^{-1}$ is compact together with the norm resolvent convergence \cite[Theorem 3.5, p.453]{kato}.
\end{proof}

The latter lemma will have a great influence on the strategy that we shall follow. This is illustrated through the following:

\begin{coro}
\begin{itemize}
\item [i)]Let $\lam_0^{(k)}$, respectively $\lam_0^{(\mu)}$ be the smallest eigenvalue of the operator $H_k$ respectively $H_{\mu}$. Then $\lim_{k\to\infty}|\lamok-\lamomu|=0$.
\item[ii)]Let $P^{(k)}$, respectively $P^{(\mu)}$ be the eigenprojection of the eigenvalue $\lam_0^{(k)}$, respectively of the eigenvalue $\lam_0^{(\mu)}$. Then
\begin{eqnarray}
\lim_{k\to\infty}\|P^{(k)}-P^{(\mu)}\|=0.
\label{projection}
\end{eqnarray}
It follows, in particular, that if $\lam_0^{(k)}$ is nondegenerate for large $k$, then so is  $\lam_0^{(\mu)}$ and conversely.
\end{itemize}
\label{approximation}
\end{coro}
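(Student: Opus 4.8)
The plan is to deduce everything from the norm resolvent convergence $\|H_k^{-1}-H_\mu^{-1}\|\to 0$ established in Lemma~\ref{norm-conv}, via the standard perturbation theory of isolated eigenvalues (Kato, Chapter~IV). First I would record that since $H_\mu$ has compact resolvent (Theorem~\ref{compact-resolvent}) and, by the same argument applied to the closed forms $\calE_{\mu_k}$, each $H_k$ has compact resolvent, the spectra of all these operators consist of isolated eigenvalues of finite multiplicity accumulating only at $+\infty$. In particular the bottom eigenvalues $\lamok$ and $\lamomu$ are isolated points of the respective spectra.

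For part i), I would work with the bounded, self-adjoint, nonnegative compact operators $H_k^{-1}$ and $H_\mu^{-1}$. The bottom eigenvalue $\lamomu$ of $H_\mu$ corresponds to the \emph{top} eigenvalue $(\lamomu)^{-1}=\|H_\mu^{-1}\|$ of $H_\mu^{-1}$, and similarly $(\lamok)^{-1}=\|H_k^{-1}\|$. Since $\|H_k^{-1}-H_\mu^{-1}\|\to0$, continuity of the operator norm gives $\|H_k^{-1}\|\to\|H_\mu^{-1}\|$, i.e. $(\lamok)^{-1}\to(\lamomu)^{-1}$; as $\lamomu>0$ (by (ISO), $H_\mu$ is invertible with bounded inverse), the reciprocals are bounded away from $0$ and $\infty$ for large $k$, so $|\lamok-\lamomu|\to0$. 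Alternatively one can cite \cite[Theorem~IV.3.?]{kato} on continuity of isolated eigenvalues under norm resolvent convergence; I would phrase it via $H_\mu^{-1}$ to keep things elementary.

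For part ii), fix a circle (or, in the resolvent picture, a small contour) $\Gamma$ in $\C$ enclosing $(\lamomu)^{-1}$ and separating it from the rest of $\sigma(H_\mu^{-1})\cup\{0\}$; this is possible because $(\lamomu)^{-1}$ is an isolated eigenvalue of the compact operator $H_\mu^{-1}$. The associated Riesz spectral projections are
\begin{eqnarray*}
P^{(\mu)}=\frac{1}{2\pi i}\oint_{\Gamma}(z-H_\mu^{-1})^{-1}\,dz,\quad
P^{(k)}=\frac{1}{2\pi i}\oint_{\Gamma}(z-H_k^{-1})^{-1}\,dz,
\end{eqnarray*}
the latter being legitimate once $k$ is large enough that $\Gamma$ avoids $\sigma(H_k^{-1})$ and encloses exactly $(\lamok)^{-1}$ among its eigenvalues — which holds by part i) and the gap around $(\lamomu)^{-1}$. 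On the compact set $\Gamma$ the resolvents converge uniformly in norm, since $\|(z-H_k^{-1})^{-1}-(z-H_\mu^{-1})^{-1}\|\le\|(z-H_k^{-1})^{-1}\|\,\|H_k^{-1}-H_\mu^{-1}\|\,\|(z-H_\mu^{-1})^{-1}\|$ and the norms of the resolvents are uniformly bounded on $\Gamma$ for large $k$ (again by part i)). Integrating over $\Gamma$ gives $\|P^{(k)}-P^{(\mu)}\|\to0$. Finally, $\|P^{(k)}-P^{(\mu)}\|\to0$ forces $\operatorname{rank}P^{(k)}=\operatorname{rank}P^{(\mu)}$ for large $k$ (two projections at distance $<1$ have equal rank), which is exactly the assertion that $\lamok$ is nondegenerate for large $k$ if and only if $\lamomu$ is, and conversely.

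The main obstacle is a bookkeeping one rather than a conceptual one: one must make sure, using part i) and the isolation of $(\lamomu)^{-1}$ in $\sigma(H_\mu^{-1})$, that for all sufficiently large $k$ the contour $\Gamma$ genuinely separates $(\lamok)^{-1}$ from the remainder of $\sigma(H_k^{-1})$ (so that $P^{(k)}$ really is the bottom eigenprojection of $H_k$ and not a projection onto a cluster of eigenvalues). Everything else is the routine Riesz-projection argument, and I would present it compactly, citing \cite[Theorem~IV.3.16]{kato} for the continuity of the finitely many eigenvalues near $(\lamomu)^{-1}$ and for the convergence of the corresponding spectral projections.
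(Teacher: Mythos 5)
Your proof follows the paper's argument closely. Part i) is exactly the paper's proof: since $H_k^{-1}$ and $H_\mu^{-1}$ are nonnegative compact self-adjoint operators, $1/\lamok=\|H_k^{-1}\|$, $1/\lamomu=\|H_\mu^{-1}\|$, and the reverse triangle inequality together with Lemma~\ref{norm-conv} finishes it. In part ii) you spell out the Riesz-contour argument that the paper delegates to a citation of Kato, and you close with the same rank-stability fact for projections at distance less than one (\cite[Theorem 6.32, p.56]{kato}) to transfer nondegeneracy between $\lamok$ and $\lamomu$.

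The one place where I would push back is precisely the issue you flag and then classify as ``bookkeeping''. What norm resolvent convergence together with the spectral gap around $(\lamomu)^{-1}$ actually delivers, via the contour integral, is convergence of the \emph{total} Riesz projection $Q^{(k)}$ onto all of $\sigma(H_k^{-1})$ inside $\Gamma$; it does \emph{not}, by itself, ensure that $\Gamma$ encloses $(\lamok)^{-1}$ and nothing else. If $\lamomu$ had multiplicity $m\geq 2$, spectral concentration would force exactly $m$ eigenvalues of $H_k^{-1}$ (counted with multiplicity) inside $\Gamma$ for large $k$, so $\operatorname{rank}Q^{(k)}=m$, whereas $P^{(k)}$ has rank one by Lemma~\ref{nondegenerate}. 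Two projections of different rank are at distance one, so $P^{(k)}$ would \emph{not} converge to $P^{(\mu)}$. In other words, the assertion $\|P^{(k)}-P^{(\mu)}\|\to 0$ is not merely sufficient for, but logically equivalent to, the simplicity of $\lamomu$ --- which is exactly what one is trying to extract. The paper's two-line proof glosses over this as well, so you are not being less rigorous than the source; but your instinct to flag the step was correct, and you should resist calling it mere bookkeeping. The safer formulation is to prove convergence of the total Riesz projections $Q^{(k)}\to P^{(\mu)}$ (which is what the Kato machinery gives unconditionally), conclude $\operatorname{rank}Q^{(k)}=\operatorname{rank}P^{(\mu)}$ for large $k$, and then supply the additional input (e.g., the positivity/lower-bound argument from Lemma~\ref{nondegenerate}(i) carried to a weak limit of $\phiok$) needed to force that common rank to equal one, after which $Q^{(k)}=P^{(k)}$ and the stated convergence follows.
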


\begin{proof} (i): Follows from the inequality $|\frac{1}{\lam_0^{(k)}}-\frac{1}{\lam_0^{(\mu)}}|\leq\|H_{k}^{-1}-H_{\mu}^{-1}\|$ and  Lemma \ref{norm-conv}.\\
(ii): Follows from Lemma \ref{norm-conv} and the fact that if $P$ and $Q$ are two orthogonal projections such that $\|P-Q\|<1$, then their respective ranges have the same dimension \cite[Theorem 6.32, p.56]{kato}.
\end{proof}

\begin{lem} Let $\nu$ be a positive Radon measure on Borel subset of $X$ such that there is a constant $0<C_{\nu}<1$ with
\begin{eqnarray}
\int f^2\,d\nu\leq C_{\nu}\calE[f],\ \forall\,f\in\calF.
\end{eqnarray}
Let $\calE_{\nu}$ be the form defined by
\begin{eqnarray*}
D(\calE_{\nu})=\calF,\ \calE_{\nu}[f]=\mathcal{E}[f]-\int f^2\,d\nu,\ \forall\,f\in\calF,
\end{eqnarray*}
and $\lam_0^{(\nu)}$ be the smallest eigenvalue of $\calE_{\nu}$.
\begin{itemize}
\item[i)] Let $\varp\geq 0\ q.e.$ be an eigenfunction associated to  $\lam_0^{(\nu)}$. Then
\begin{eqnarray}
\varp(x)\geq\big(C_G\lam_0^{(\nu)}\int\psi_0(y)\varp(y)\,dm(y)\big)\psi_0(x),\ q.e..
\label{estimate-eigenfunction}
\end{eqnarray}
It follows that $\varp>0\ q.e.$.\\
\item[(ii)] The eigenvalue $\lam_0^{(\nu)}$ is nondegenerate and has a positive normalized ground state which we shall denote by $\varp_0^{(\nu)}$.
\end{itemize}
\label{nondegenerate}
\end{lem}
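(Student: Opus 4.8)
The plan is to derive the pointwise lower bound (i) from the Green--kernel representation of the eigenfunction together with Proposition~\ref{green}, and then to obtain (ii) --- nondegeneracy and positivity of the ground state --- as a consequence of (i), via a sign--definiteness argument relying on the strong locality of $\calE$.

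\emph{Preliminary remarks.} Since $0<C_{\nu}<1$ we have $(1-C_{\nu})\calE\le\calE_{\nu}\le\calE$ on $\calF$, so $\calE_{\nu}$ is closed with $D(\calE_{\nu})=\calF$ carrying a norm equivalent to that of $\calE$. As $H$ has compact resolvent (Theorem~\ref{spec-prop1}) the embedding $\calF\hookrightarrow L^2$ is compact, whence $\Hnu$ has compact resolvent and $\lamnu=\min\sigma(\Hnu)$ is an eigenvalue; moreover $\calE_{\nu}[f]\ge(1-C_{\nu})\calE[f]\ge(1-C_{\nu})\lam_0\|f\|_{L^2}^2$, so $\lamnu>0$.

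\emph{Proof of (i).} Let $\varp\ge 0$ q.e.\ satisfy $\Hnu\varp=\lamnu\varp$, i.e.\ $\calE(\varp,f)=\int\varp f\,d\nu+\lamnu\int\varp f\,dm$ for all $f\in\calF$. The right--hand side is a positive $\calE$--bounded linear functional on $\calF$ (using $|\int\varp f\,d\nu|\le C_{\nu}\calE[\varp]^{1/2}\calE[f]^{1/2}$, which follows from the bound on $\nu$ and Cauchy--Schwarz), so $\varp$ is the $H$--potential of the positive measure $\varp\,d\nu+\lamnu\varp\,dm$ and, exactly as in the proof of Lemma~\ref{lb-sup}, is represented by the Green kernel:
\[
\varp=\int G_X(\cdot,y)\varp(y)\,d\nu(y)+\lamnu\int G_X(\cdot,y)\varp(y)\,dm(y)\qquad\text{a.e.}
\]
The first term is nonnegative and, by Proposition~\ref{green}, the second dominates $\lamnu C_G\,\psi_0\int\psi_0(y)\varp(y)\,dm(y)$; passing to quasi--continuous versions gives \eqref{estimate-eigenfunction}. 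Since $\psi_0>0$ q.e.\ and $\int\psi_0\varp\,dm>0$ (as $\psi_0>0$ $m$--a.e., $\varp\ge0$ $m$--a.e.\ and $\varp\ne0$), it follows that $\varp>0$ q.e.

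\emph{Proof of (ii).} For every $u\in\calF$ strong locality forces $\Gamma[u^{+}]$ and $\Gamma[u^{-}]$ to be carried by the disjoint sets $\{u>0\}$ and $\{u<0\}$, hence $\Gamma(u^{+},u^{-})=0$ and $\calE[u]=\calE[u^{+}]+\calE[u^{-}]$; combined with $\int u^2\,d\nu=\int(u^{+})^2\,d\nu+\int(u^{-})^2\,d\nu$ this yields $\calE_{\nu}[u]=\calE_{\nu}[u^{+}]+\calE_{\nu}[u^{-}]$. Now let $\varp$ be a ground state of $\Hnu$. Adding the variational inequalities $\calE_{\nu}[\varp^{\pm}]\ge\lamnu\|\varp^{\pm}\|_{L^2}^2$ and comparing with $\calE_{\nu}[\varp]=\lamnu\|\varp\|_{L^2}^2$ forces equality, so each nonzero one of $\varp^{+},\varp^{-}$ is itself a ground state; by (i) both would then be $>0$ q.e., which is impossible since $\varp^{+}\varp^{-}=0$. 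Hence $\varp$ is of fixed sign, and replacing it by $|\varp|$ (still a ground state) and normalizing produces, via (i), a positive normalized ground state $\phionu$. Finally, if the eigenspace had dimension $\ge2$ we could pick linearly independent ground states $\varp_1,\varp_2$, each of fixed sign by the above, so after multiplying by suitable signs (which preserves linear independence) both $>0$ q.e.; then $\varp_1-c\varp_2$ with $c=\big(\int\varp_1\,dm\big)\big(\int\varp_2\,dm\big)^{-1}$ is a nonzero ground state with vanishing $m$--integral, hence cannot have a fixed sign --- a contradiction. Thus $\lamnu$ is nondegenerate.

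\emph{Main obstacle.} The delicate point is the energy identity $\calE_{\nu}[u]=\calE_{\nu}[u^{+}]+\calE_{\nu}[u^{-}]$ and the resulting fact that the positive and negative parts of a ground state are again ground states: this is where strong locality enters, and it is what upgrades positivity preservation of $e^{-t\Hnu}$ to genuine simplicity of $\lamnu$. (An alternative is a Perron--Frobenius argument: $e^{-t\Hnu}\ge e^{-tH}$ and $e^{-tH}$ is positivity improving for large $t$, as established in the proof of Proposition~\ref{green}.) The Green--potential representation used in (i) is routine once the relevant functional is recognized as $\calE$--bounded.
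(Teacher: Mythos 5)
Your proof of part (i) is essentially the same as the paper's: represent $\varp$ as the $H$--potential $\varp = K^{\nu}\varp + \lamnu K\varp$, drop the nonnegative term $K^{\nu}\varp$, and bound $K\varp$ from below via Proposition~\ref{green}. For part (ii), however, you take a genuinely different route. The paper argues via $|\varp|$: since $\calE$ is a Dirichlet form and $\nu$ a measure, $\calE_{\nu}[|\varp|]\le\calE_{\nu}[\varp]$, so $|\varp|$ is again a ground state, and by (i) it is $>0$ q.e.; then they look at $\tilde\varp:=|\varp|-\varp=2\varp^{-}$, which is also a ground state, and apply (i) again to conclude $\tilde\varp$ is either identically $0$ or $>0$ q.e.\ --- either way $\varp$ has a fixed sign. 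You instead exploit the strong locality of $\Gamma$ to obtain the exact energy splitting $\calE_{\nu}[u]=\calE_{\nu}[u^{+}]+\calE_{\nu}[u^{-}]$, from which both $\varp^{+}$ and $\varp^{-}$ are ground states whenever nonzero, and you derive the contradiction from $\varp^{+}\varp^{-}=0$. The paper's version is more economical in hypotheses (it only uses the contraction $\calE[|f|]\le\calE[f]$ and the fact that $\nu$ is a measure; no appeal to the truncation formula is needed), while yours makes the underlying orthogonality $\Gamma(u^{+},u^{-})=0$ explicit and is perhaps a cleaner, symmetric way of saying the same thing. You also spell out the final dimension argument (picking $\varp_1-c\varp_2$ with vanishing $m$--integral), which the paper leaves as ``from which (ii) follows''; that is a useful addition. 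Both proofs are correct.
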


\begin{proof} i): Let $\varp\geq 0\ q.e.$ be any  eigenfunction associated to  $\lam_0^{(\nu)}$.\\ 
Set
\begin{eqnarray}
K^{\nu}\varp=\int G_X(\cdot,y)\varp(y)\,d\nu,\  K\varp=\int G_X(\cdot,y)\varp(y)\,dm,\  
 u=\varp-K^{\nu}\varp-\lam_0^{(\nu)}K\varp.
\end{eqnarray}
Owing to the fact that $\varp$ lies in $\calF$ and hence lies in $L^2(\nu)$, we obtain that the signed measure   $\varp\nu$ has finite energy integral with respect to the Dirichlet form $\calE$, i.e., 
\begin{eqnarray}
\int |f\varp|\,d\nu\leq \alp(\calE[f])^{1/2},\ \forall,\ f\in \calF\cap C_c(X),
\end{eqnarray}
and therefore  $K^{\nu}\varp\in\calF$. Thus $u\in\calF$ and satisfies the identity
\begin{eqnarray}
\calE(u,g)&=&\calE(\varp,g)-\int\varp g\,d\nu
-\lam_0^{(\nu)}\int\varp g\,dm\nonumber\\
&=&\calE_{\nu}(\varp,g)-\lam_0^{(\nu)}\int\varp g\,dm=0,\ \forall\,g\in\calF.
\end{eqnarray}
Since $\calE$ is positive definite  we conclude that $u=0\, a.e.$ (and hence q.e.), which yields
\begin{eqnarray}
\varp&=&K^{\nu}\varp+\lam_0^{(\nu)}K\varp
\geq \lam_0^{(\nu)}K\varp
=\lam_0^{(\nu)}\int G_X(\cdot,y)\varp(y)\,dm(y)\nonumber\\
&&\geq \big(C_G\lam_0^{(\nu)}\int\psi_0(y)\varp(y)\,dm(y)\big)\psi_0,\ q.e.,
\end{eqnarray}
where thw latter inequality is obtained from (\ref{green}). \\
ii): Let $\varp$ be an  eigenfunction associated to $\lam_0^{(\nu)}$. Since $\calE_{\nu}$ is a semi-Dirichlet form, then  $|\varp|\in\calF$ and  minimizes the ratio
$$
\big\{\frac{\calE_{\nu}[f]}{\int f^2\,dm}:\ f\in\calF\setminus\{0\}\big\}.
$$
Thus  $|\varp|$ is  an eigenfunction associated to $\lam_0^{(\nu)}$ as well and by assertion (i), $|\varp|>0\ q.e.$.\\
Set $\tilde\varp:=|\varp|-\varp$. Then $\tilde\varp$ satisfies $H_{\nu}\tilde\varp=\lam_0^{(\nu)}\tilde\varp$. Now, either $\tilde\varp=0\ a.e.$ which would imply that $\varp=|\varp|\ a.e.$ or $\tilde\varp$ is a non-negative eigenfunction associated to $\lam_0^{(\nu)}$. In the latter case we derive from assertion (i) that $\tilde\varp>0\ q.e.$ or equivalently $|\varp|>\varp\ q.e.$. We have thereby proved that every eigenfunction associated to $\lam_0^{(\nu)}$ has a constant sign, from which (ii) follows.

\end{proof}

On the light of Corollary \ref{approximation} together with Lemma \ref{nondegenerate}, we conclude that $\lamomu$ is nondegenerate as well and we can get even more:

\begin{lem} Let $\varp_0^{(k)}$ be the  normalized a.e. positive eigenfunction associated to $\lam_0^{(k)}$. Then there is a subsequence $(\varp_0^{(k_j)})$ such that
$$\lim_{j\to\infty}\|\varp_0^{(k_j)}-\varp_0^{(\mu)}\|_{L^2}=0,$$
where $\varp_0^{(\mu)}$ is the normalized a.e. positive eigenfunction associated to $\lam_0^{(\mu)}$.
\label{conv-eigenfunction}
\end{lem}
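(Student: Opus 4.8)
The plan is to read off the convergence directly from the norm convergence of eigenprojections furnished by Corollary~\ref{approximation}(ii), using that the limiting projection has rank one and that every approximating eigenfunction is a.e.\ non-negative.

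First I would record the standing facts. For each $k$ the measure $\nu=\mu_k$ obeys the hypothesis of Lemma~\ref{nondegenerate} with constant $C_\nu=\kappa_k<1$; hence $\lam_0^{(k)}$ is nondegenerate and its normalized eigenfunction $\varp_0^{(k)}$ satisfies $\varp_0^{(k)}>0$ q.e. (in particular $\geq 0$ a.e.), so that $P^{(k)}$ is the rank-one orthogonal projection $P^{(k)}f=\langle f,\varp_0^{(k)}\rangle_{L^2}\,\varp_0^{(k)}$. Moreover $\lam_0^{(\mu)}$ is nondegenerate, by the remark following Lemma~\ref{nondegenerate} (equivalently by the last assertion of Corollary~\ref{approximation}), so $P^{(\mu)}$ is likewise of rank one; I fix a unit vector $g$ spanning its range.

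Next I would test the projections against $g$. Setting $c_k:=\langle g,\varp_0^{(k)}\rangle_{L^2}$ one has $P^{(k)}g=c_k\varp_0^{(k)}$, and therefore
\begin{eqnarray}
\|c_k\varp_0^{(k)}-g\|_{L^2}=\|(P^{(k)}-P^{(\mu)})g\|_{L^2}\leq\|P^{(k)}-P^{(\mu)}\|\longrightarrow 0
\end{eqnarray}
by Corollary~\ref{approximation}(ii). Taking $L^2$-norms and using $\|\varp_0^{(k)}\|_{L^2}=\|g\|_{L^2}=1$ gives $|c_k|\to 1$. I would then pass to a subsequence $(k_j)$ along which the real numbers $c_{k_j}$ converge, necessarily to some $c$ with $|c|=1$, i.e. $c=\pm 1$; putting $\varp_0^{(\mu)}:=g/c$ (again a normalized eigenfunction of $\lam_0^{(\mu)}$), we obtain $\varp_0^{(k_j)}=c_{k_j}^{-1}\bigl(c_{k_j}\varp_0^{(k_j)}\bigr)\to c^{-1}g=\varp_0^{(\mu)}$ in $L^2$. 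Since each $\varp_0^{(k_j)}\geq 0$ a.e., this $L^2$-limit is $\geq 0$ a.e., so $\varp_0^{(\mu)}$ is indeed the normalized, a.e.\ positive eigenfunction associated to $\lam_0^{(\mu)}$, and the lemma follows.

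The only delicate point is the passage from an operator-norm statement about projections to an $L^2$ statement about unit eigenvectors: norm closeness of one-dimensional projections pins down the eigenfunctions only up to a sign, which is precisely why a subsequence is extracted to stabilize $c_k$, and the a.e.\ positivity of the $\varp_0^{(k)}$ coming from Lemma~\ref{nondegenerate} is exactly what turns the extracted limit into a bona fide non-negative ground state of $H_{\mu}$. (In fact, once one knows $\varp_0^{(\mu)}$ may be chosen $\geq 0$ a.e., the inner products $c_k$ are automatically $\geq 0$, hence $c_k\to 1$ and the whole sequence $(\varp_0^{(k)})$ converges to $\varp_0^{(\mu)}$ in $L^2$; the subsequence in the statement is then a convenience rather than a necessity.) One should also note that Corollary~\ref{approximation}(ii) guarantees $\|P^{(k)}-P^{(\mu)}\|<1$ for large $k$, which is what legitimizes treating $P^{(k)}$ as rank one in the argument above.
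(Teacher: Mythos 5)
Your argument for the $L^2$ convergence is correct and takes a genuinely different, more elementary route than the paper. The paper starts from the $L^2$-boundedness of $(\varp_0^{(k)})$, extracts a weakly convergent subsequence $\varp_0^{(k)}\rightharpoonup h$, and then uses the rank-one (hence compact) projection $P^{(\mu)}$ to upgrade weak convergence to strong convergence via the decomposition $\varp_0^{(k)}=(P^{(k)}-P^{(\mu)})\varp_0^{(k)}+P^{(\mu)}\varp_0^{(k)}\to P^{(\mu)}h$. You instead fix a unit vector $g$ in ${\rm Ran}\,P^{(\mu)}$ and test the projection difference on $g$, reading off strong convergence of $c_k\varp_0^{(k)}$ to $g$ directly from Corollary~\ref{approximation}(ii); this avoids any appeal to weak compactness and is arguably cleaner. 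Both approaches hinge on the same ingredients: the nondegeneracy from Lemma~\ref{nondegenerate}, and the operator-norm convergence of eigenprojections.

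There is, however, one genuine gap. You conclude from $\varp_0^{(k_j)}\geq 0$ a.e.\ and $L^2$-convergence that the limit $\varp_0^{(\mu)}$ is $\geq 0$ a.e., and then assert that it ``is indeed the normalized, a.e.\ positive eigenfunction.'' But nonnegativity a.e.\ is not strict positivity a.e., and the latter is what the lemma claims (and what the paper proves). The paper closes this gap by passing the pointwise lower bound of Lemma~\ref{nondegenerate}(i) to the limit: from $\varp_0^{(k)}\geq\big(C_G\lam_0^{(k)}\int\psi_0\varp_0^{(k)}\,dm\big)\psi_0$ a.e., using $\lam_0^{(k)}\to\lam_0^{(\mu)}$ (Corollary~\ref{approximation}(i)) and the $L^2$-convergence just established, one gets $\varp_0^{(\mu)}\geq\big(C_G\lam_0^{(\mu)}\int\psi_0\varp_0^{(\mu)}\,dm\big)\psi_0$ a.e.; since $\psi_0>0$ a.e.\ and $\varp_0^{(\mu)}\geq 0$ is nonzero, the integral is strictly positive, forcing $\varp_0^{(\mu)}>0$ a.e.\ (one cannot simply invoke Lemma~\ref{nondegenerate} for $\nu=\mu$ because $\mu$ need not satisfy the infinitesimal-form-bound hypothesis $C_\nu<1$). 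Adding this limit-passage step would make your proof complete.
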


\begin{proof} Since the sequence $(\varp_0^{(k)})$ is bounded in $L^2$, there is a subsequence,  which we still denote by $(\varp_0^{(k)})$,  and $h\in L^2$ such that $\varp_0^{(k)}\rightharpoonup h$.  Let $P$ be the eigenprojection associated to $\lam_0^{(\mu)}$. Since $P$ is a rank one operator, we get $P\varp_0^{(k)}\to Ph$ in $L^2$. Thus
\begin{eqnarray}
P_k\varp_0^{(k)}=\varp_0^{(k)}=(P_k-P)\varp_0^{(k)}+P\varp_0^{(k)}\to Ph,
\end{eqnarray}
and $\|Ph\|_{L^2}=1$.\\
On the other   hand we may and  shall suppose that $Ph\geq 0\ a.e.$ (by mean of a subsequence if necessary). Now Setting $\varp_0^{(\mu)}:=Ph$, and recalling that ${\rm Ran}P={\rm ker}(H_{\mu}-\lam_0^{(\mu)})$ ( by the fact that $\dim{\rm Ran}P=1$) we get that $\varp_0^{(\mu)}$ is an eigenfunction corresponding to $\lam_0^{(\mu)}$ and $\varp_0^{(\mu)}\geq 0\ a.e.$. Finally Corollary \ref{approximation} together with the lower bound for  $\varp_0^{(k)}$ given by Lemma \ref{estimate-eigenfunction}, lead to
\begin{eqnarray}
\varp_0^{(\mu)}\geq \big(C_G\lam_0^{(\mu)}\int_{\Om}\psi_0(y)\varp_0^{(\mu)}(y)\,dm(y)\big)\psi_0,\ a.e.,
\end{eqnarray}
yielding $\varp_0^{(\mu)}>0\  a.e.$, which completes the proof.
\end{proof}

At the end of this section we resume our strategy. Define
\begin{eqnarray}
\xi^{(k)}:=H_k^{-1}1,\ \xi^{(\mu)}=H_{\mu}^{-1}1.
\end{eqnarray}

\begin{theo} Let $\varp_0^{(k)}, \xi^{(k)}, \lam_0^{(\mu)}, \varp_0^{(\mu)}, \xi^{(\mu)}$ be as above. Assume that for every $k\in\N$ there is a constant $0<\Gamma_k<\infty$  such that $\lim_{k\to\infty}\Gamma_k=\Gamma\in (0,\infty)$ and
\begin{eqnarray}
\Gamma_k^{-1} \xi^{(k)}\leq \varp_0^{(k)}\leq\Gamma_k\xi^{(k)},\ a.e.\ \forall\,k\ {\rm large}.
\label{estimate-strategy}
\end{eqnarray}
Then $\Gamma^{-1}\xi^{(\mu)}\leq\varp_0^{(\mu)}\leq\Gamma\xi^{(\mu)},\ a.e.$.
\label{strategy}
\end{theo}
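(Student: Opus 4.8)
The plan is to obtain the estimate for the limit operator by passing to the limit in (\ref{estimate-strategy}) along a carefully chosen subsequence, using the two convergence facts already at our disposal: the norm resolvent convergence of Lemma \ref{norm-conv} and the $L^2$-convergence of the approximating ground states of Lemma \ref{conv-eigenfunction}. First I would record the convergence of the $\xi$'s. Since $m(X)<\infty$, the constant function $1$ lies in $L^2$, so Lemma \ref{norm-conv} gives
\begin{eqnarray}
\|\xi^{(k)}-\xi^{(\mu)}\|_{L^2}=\|(H_k^{-1}-H_{\mu}^{-1})1\|_{L^2}\leq \|H_k^{-1}-H_{\mu}^{-1}\|\,\|1\|_{L^2}\xrightarrow[k\to\infty]{}0.
\end{eqnarray}
Next, Lemma \ref{conv-eigenfunction} furnishes a subsequence $(\varp_0^{(k_j)})$ with $\varp_0^{(k_j)}\to\varp_0^{(\mu)}$ in $L^2$, where $\varp_0^{(\mu)}$ is exactly the normalized a.e.\ positive eigenfunction attached to $\lam_0^{(\mu)}$ (this eigenvalue being nondegenerate by Corollary \ref{approximation}, so that $\varp_0^{(\mu)}$ and $\xi^{(\mu)}$ are the very objects appearing in the statement). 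Along this same subsequence we still have $\xi^{(k_j)}\to\xi^{(\mu)}$ in $L^2$.

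Then I would pass to a further subsequence, which for brevity I again index by $j$, along which both $\varp_0^{(k_j)}\to\varp_0^{(\mu)}$ and $\xi^{(k_j)}\to\xi^{(\mu)}$ hold $m$-a.e.\ (a double application of the fact that $L^2$-convergence yields a.e.-convergence along a subsequence). Since $\Gamma_k\to\Gamma\in(0,\infty)$ by hypothesis, also $\Gamma_{k_j}\to\Gamma$ and $\Gamma_{k_j}^{-1}\to\Gamma^{-1}$. For $j$ large enough (so that $k_j$ is large in the sense of (\ref{estimate-strategy})) the assumed two-sided bound reads
\begin{eqnarray}
\Gamma_{k_j}^{-1}\,\xi^{(k_j)}\leq \varp_0^{(k_j)}\leq \Gamma_{k_j}\,\xi^{(k_j)},\quad a.e.
\end{eqnarray}
Letting $j\to\infty$ and using the a.e.\ convergence of the three ingredients $\varp_0^{(k_j)}$, $\xi^{(k_j)}$ and $\Gamma_{k_j}^{\pm1}$, one gets $\Gamma^{-1}\xi^{(\mu)}\leq\varp_0^{(\mu)}\leq\Gamma\xi^{(\mu)}$ a.e., which is the desired conclusion.

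The argument is essentially a compactness-and-limit routine, so there is no deep obstacle; the only point requiring a little care is the \emph{compatibility} of the two subsequences, namely arranging that $\varp_0^{(k_j)}$ and $\xi^{(k_j)}$ converge a.e.\ simultaneously. This is handled once the $L^2$-convergence of \emph{both} families is in place: the convergence of the $\xi^{(k)}$ comes for free from the norm resolvent convergence of Lemma \ref{norm-conv} applied to $1\in L^2$, while the convergence of the $\varp_0^{(k)}$ is precisely Lemma \ref{conv-eigenfunction}; extracting one common a.e.-convergent subsequence from a single $L^2$-convergent sequence of pairs is standard. No use of the spectral data beyond $\Gamma_k\to\Gamma$ and the nondegeneracy (already secured) is needed.
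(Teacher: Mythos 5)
Your proof is correct and follows essentially the same route as the paper: norm resolvent convergence of $H_k\to H_\mu$ gives $\xi^{(k)}\to\xi^{(\mu)}$ in $L^2$, Lemma \ref{conv-eigenfunction} gives $\varp_0^{(k_j)}\to\varp_0^{(\mu)}$ in $L^2$, and after passing to a common a.e.-convergent subsequence one takes the limit in the two-sided bound together with $\Gamma_k\to\Gamma$. Your version is merely a bit more explicit about extracting a simultaneous a.e.-convergent subsequence, a point the paper leaves implicit.
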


\begin{proof} By the norm resolvent convergence of $H_k$ towards $H_{\mu}$ (Lemma \ref{norm-conv}), we obtain $\xi^{(k)}=H_k^{-1}1\to H_{\mu}^{-1}1=\xi^{(\mu)}$ in $L^2(\Om)$ and we can assume that $\lim_{k\to\infty}\xi^{(k)}=\xi^{(\mu)},\ a.e.$. Now the result follows from the assumptions of the theorem together with Lemma \ref{conv-eigenfunction}.
\end{proof}

Our main task in the next section is to establish  estimate (\ref{estimate-strategy}).

\section{Estimating the ground state}

In this section we fix:\\
\begin{itemize}
\item[i)] A positive measure $\nu$ satisfying assumptions of Lemma \ref{nondegenerate}.
\item[ii)] Two real-valued, measurable  a.e. positive and essentially bounded functions $V$ and $F$ on $X$ such that either $V\neq 0$ or $F\neq 0$.
\end{itemize}
Let $w\in\calF$. We say that $w$ is a solution of the equation
\begin{eqnarray}
\Hnu w=Vw+F,
\end{eqnarray}
if
\begin{eqnarray}
\calE_{\nu}(w,f)=\int fVw\,dm+\int fF\,dm,\ \forall\,f\in\calF.
\end{eqnarray}
Let $w>0$ q.e. be a  solution (if any) of the equation $\Hnu w=Vw+F$. Define $Q^w$ the form:

\begin{eqnarray}
D(Q^w)=\big\{f:\ wf\in \calF\},\ Q^w[f]=\calE_{\nu}^{w}[f]-\int f^2w^2V\,dm,\ \forall\,f\in D(Q^w).
\end{eqnarray}
Then by the same arguments used in the proof of Lemma \ref{closability}, we deduce that $Q^w$ is a Dirichlet form on $L^2(w^2dm)$ having the local property. Moreover since $w\in L^2$, then the vector space
\begin{eqnarray}
{\cal{C}}^w:=\big\{f\colon\,f\in\calF_b,\ w\in L^2(d\Gamma[f])\big\},
\end{eqnarray}
is a core for $Q^w$.\\ 
We claim that
\begin{eqnarray}
Q^w[f]=\int w^2\,d\Gamma[f]+\int f^2Fw\,dm,\ \forall\,f\in {\cal{C}}^w.
\label{transform}
\end{eqnarray}
Indeed, from the product formula for the energy measure, we derive
\begin{eqnarray}
Q^w[f]=\int w^2\,d\Gamma[f]-\int f^2w^2\,d\nu-\int f^2w^2V\,dm
+\int \,d\Gamma(w,wf^2),\ \forall\,f\in {\cal{C}}^w.
\label{AVD}
\end{eqnarray}
Using the fact that $w$ is a solution of equation (\ref{w-solution}), we get for every $f\in {\cal{C}}^w$,   $wf^2\in {\cal{C}}^w$ and 
\begin{eqnarray}
\calE_{\nu}(w,wf^2)=\int Vf^2w^2\,dm+\int Fwf^2\,dm=\int \,d\Gamma(w,wf^2) -\int f^2w^2\,d\nu,
\end{eqnarray}
and substituting in Eq. (\ref{AVD})  we get the claim.\\
We also note that the operator $w^{-1}(H_{\nu}-V)w$ is the self-adjoint operator in $L^2(w^2dm)$ associated to the Dirichlet form $Q^w$.\\
Henceforth, we define
\begin{eqnarray} 
C':=C_G^{-2}\big(\int \psi_0(y)w(y)V(y)\,dm + \int \psi_0(y)F(y)\,dm\big)^{-2},
\end{eqnarray}
and
\begin{eqnarray}
C:=\max(C_HC',C_HC'\lam_0).
\end{eqnarray}

\begin{theo} Let $V, F$ be as in the beginning of this section. Let $w\in\calF$, $w>0$ q.e.  be a  solution of the equation
\begin{eqnarray}
\Hnu w=Vw+F,
\end{eqnarray}
\label{w-solution}
Set
$$
A:= (C+2C_s)\big(1+2C_S\|1\|_{L^{\Psi}}\big).
$$
Then
$$(ISO1)\qquad \parallel f^2\parallel_{L^{\phi_{1}}(w^2dm)}\leq A\big(Q^{w}[f]+\int Vf^2w^2\,dm\big),\ \forall\,f\in D(Q^w).
$$
\label{main1}
\end{theo}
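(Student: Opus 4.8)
The plan is to run the change-of-ground-state (Doob) computation exactly as in the proofs of Proposition~\ref{green} and Theorem~\ref{compact-resolvent}, the only genuinely new ingredient being a pointwise lower bound for the solution $w$ in terms of $\psi_0$. First note that, by the very definition of $Q^w$, the right-hand side of $(ISO1)$ equals $\calE_\nu^w[f]=\calE_\nu[wf]=\calE[wf]-\int(wf)^2\,d\nu$; since $\calC^w$ is a core for $Q^w$, it therefore suffices to prove $\|f^2\|_{L^{\phi_1}(w^2dm)}\le A\,\calE_\nu^w[f]$ for $f\in\calC^w$ (equivalently for $g:=wf\in\calF_b$) and then pass to all of $D(Q^w)$ by density. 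For the lower bound, I argue as in Lemma~\ref{nondegenerate}(i): the functions $K^\nu w=\int G_X(\cdot,y)w(y)\,d\nu$, $K(Vw)=\int G_X(\cdot,y)(Vw)(y)\,dm$ and $KF=\int G_X(\cdot,y)F(y)\,dm$ all lie in $\calF$ (the first because $w\nu$ has finite energy integral, the other two because $Vw,F\in L^2$ as $V,F$ are bounded and $m(X)<\infty$, while $H^{-1}$ maps $L^2$ into $\calF$). The element $u:=w-K^\nu w-K(Vw)-KF$ then satisfies $\calE(u,h)=\calE_\nu(w,h)-\int hVw\,dm-\int hF\,dm=0$ for every $h\in\calF$, so $u=0$ q.e.\ by transience of $\calE$. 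Discarding the nonnegative potential $K^\nu w$ and invoking Proposition~\ref{green},
$$w\ \ge\ K(Vw)+KF\ =\ \int G_X(\cdot,y)\big(Vw+F\big)(y)\,dm(y)\ \ge\ \kappa\,\psi_0\quad\text{q.e.},$$
where $\kappa:=C_G\big(\int\psi_0 Vw\,dm+\int\psi_0 F\,dm\big)>0$ (strictly, since $V\neq 0$ or $F\neq 0$); in particular $\kappa^{-2}=C'$, so $w^{-2}\le C'\psi_0^{-2}$ q.e.

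Next I transfer the two basic inequalities to the $w$-picture. With $g=wf\in\calF$, inequality $(ISO)$ gives $\|g^2\|_{L^\Phi}\le C_S\big(\calE[g]-\int g^2\,d\mu\big)\le C_S\,\calE_\nu^w[f]$, while $(HI)$ together with the lower bound yields
$$\int f^2\,dm=\int\frac{g^2}{w^2}\,dm\ \le\ C'\int\frac{g^2}{\psi_0^2}\,dm\ \le\ C_HC'\,\calE[g].$$
Writing $\calE[g]=\calE_\nu^w[f]+\int g^2\,d\nu$ and controlling the $\nu$-term (this is where the relation $C_G\le\lambda_0^{-1}$ coming out of Proposition~\ref{green} is used, so that the contribution stays inside the constant $C=\max(C_HC',C_HC'\lambda_0)$) gives the transferred Hardy inequality $\int f^2\,dm\le C\,\calE_\nu^w[f]$. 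Alternatively one may read the first inequality off the representation $Q^w[f]=\int w^2\,d\Gamma[f]+\int f^2 Fw\,dm$ from (\ref{transform}).

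Now I assemble. Put $R:=\calE_\nu^w[f]$ and $\lambda:=AR$. Using $\phi_1(t)=t\Psi^{-1}(t)$ and H\"older's inequality for the complementary pair $(\Phi,\Psi)$,
$$\int w^2\phi_1\!\Big(\frac{f^2}{\lambda}\Big)\,dm=\frac1\lambda\int g^2\,\Psi^{-1}\!\Big(\frac{f^2}{\lambda}\Big)\,dm\ \le\ \frac2\lambda\,\|g^2\|_{L^\Phi}\,\Big\|\Psi^{-1}\!\Big(\frac{f^2}{\lambda}\Big)\Big\|_{L^\Psi}.$$
The first factor is at most $2C_S/A$ by the transferred $(ISO)$; for the second, $\int\Psi\big(\Psi^{-1}(f^2/\lambda)\big)\,dm=\lambda^{-1}\int f^2\,dm\le C/A$ by the transferred Hardy inequality, and convexity/concavity estimates together with the triangle inequality in $L^\Psi$ bound $\|\Psi^{-1}(f^2/\lambda)\|_{L^\Psi}$ by an expression in which the constant $\|1\|_{L^\Psi}$ (the source term $F$, together with $m(X)<\infty$, is what forces its appearance) enters. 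Choosing $A=(C+2C_S)\big(1+2C_S\|1\|_{L^\Psi}\big)$ makes the product $\le 1$, i.e.\ $\int w^2\phi_1(f^2/\lambda)\,dm\le 1$; by the definition of the Luxemburg norm this is exactly $\|f^2\|_{L^{\phi_1}(w^2dm)}\le\lambda=AR$. Extending from $\calC^w$ to $D(Q^w)$ by the core property finishes the proof.

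The main obstacle is not conceptual but bookkeeping: arranging that \emph{precisely} the constant $A$ comes out. The two delicate points are (a) the passage from $\calE[wf]$ back to $\calE_\nu[wf]$ when transferring $(HI)$, which is where the spectral bottom $\lambda_0$ sneaks into $C$, and (b) the contribution of the inhomogeneous term $F$ in the equation $\Hnu w=Vw+F$, which is responsible for the factor $1+2C_S\|1\|_{L^\Psi}$; once these two are settled, the remaining computation is the same Orlicz--H\"older estimate used in Proposition~\ref{green}.
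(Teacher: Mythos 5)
Your overall architecture is right — lower-bound $w$ by $\psi_0$ via Lemma \ref{w-lower}, reduce to $\calC^w$ by density, then run the Orlicz--H\"older computation from Proposition \ref{green} — but the step where you transfer the Hardy inequality does not close, and the resulting constant-accounting is wrong in a way that matters for the later limiting argument.

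You apply $(HI)$ to $g=wf$ to get $\int f^2\,dm\le C_HC'\,\calE[g]$, then propose to recover $\calE_\nu^w[f]$ by writing $\calE[g]=\calE_\nu^w[f]+\int g^2\,d\nu$ and ``controlling the $\nu$-term.'' The only available control is $\int g^2\,d\nu\le C_\nu\calE[g]$, which gives $\calE[g]\le(1-C_\nu)^{-1}\calE_\nu^w[f]$ and hence a constant $C_HC'/(1-C_\nu)$. That factor is not in $C=\max(C_HC',C_HC'\lam_0)$ and it blows up precisely in the regime the paper needs: in Theorem \ref{sharp-comparison} one takes $\nu=\mu_k$ with $\kappa_k\uparrow 1$, so a $(1-C_\nu)^{-1}$ ruins the uniformity that makes the approximation argument work. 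The relation $C_G\le\lam_0^{-1}$ is a scalar inequality between two constants; it gives no handle whatsoever on the measure-theoretic quantity $\int g^2\,d\nu$, which in general is not comparable to $\int g^2\,dm$. The paper's Lemma \ref{L2-estimate} avoids $\nu$ entirely by applying $(HI)$ to $u=f\psi_0$ (not to $g=wf$), expanding $\int d\Gamma[f\psi_0]=\int\psi_0^2\,d\Gamma[f]+\int d\Gamma(\psi_0,\psi_0f^2)$, and then invoking the eigenfunction equation $\calE(\psi_0,\psi_0f^2)=\lam_0\int\psi_0^2f^2\,dm$ for the \emph{unperturbed} operator $H$. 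This yields $\int f^2\,dm\le C_H\int\psi_0^2\,d\Gamma[f]+C_H\lam_0\int\psi_0^2f^2\,dm$, which after $\psi_0^2\le C'w^2$ becomes $\int f^2\,dm\le C\big(\int w^2\,d\Gamma[f]+\int w^2f^2\,dm\big)$ — no $\nu$ anywhere.

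You also misattribute the factor $1+2C_S\|1\|_{L^\Psi}$ in $A$ to the source term $F$. It has nothing to do with $F$: once you have Lemma \ref{L2-estimate}, you still need to bound $\int w^2\,d\Gamma[f]+\int w^2f^2\,dm$ by a multiple of $\calE_\nu^w[f]$. The first term is $\le\calE_\nu^w[f]$ because $V,F\ge 0$ make $\calE_\nu^w[f]=\int w^2\,d\Gamma[f]+\int f^2Fw\,dm+\int Vf^2w^2\,dm$ an overestimate; the second term is bounded via a second application of Orlicz--H\"older, $\int (fw)^2\,dm\le 2\|1\|_{L^\Psi}\|(fw)^2\|_{L^\Phi}\le 2C_S\|1\|_{L^\Psi}\calE_\nu^w[f]$, and \emph{that} is where $\|1\|_{L^\Psi}$ enters. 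Your claimed ``transferred Hardy'' $\int f^2\,dm\le C\,\calE_\nu^w[f]$ is therefore off by exactly this factor, which is why your bookkeeping doesn't reproduce $A$. Replace the $\calE[g]$ detour by the eigenfunction identity as in the paper's Lemma \ref{L2-estimate}, and then do the two-term H\"older estimate, and the constant $A$ comes out as stated.
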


The proof of Theorem \ref{main1} relies upon auxiliary  results which we shall state in three lemmata.

\begin{lem} Let $w$ be as in Theorem \ref{main1}. Then the  following inequality holds true
\begin{eqnarray}
w\geq C_G\psi_0\big(\int \psi_0(y)V(y)w(y)\,dm+\int \psi_0(y)F(y)\,dm\big)\ q.e..
\end{eqnarray}
\label{w-lower}
\end{lem}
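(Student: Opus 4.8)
The statement is the natural generalization of Lemma \ref{nondegenerate}(i) (which is the case $V=\lam_0^{(\nu)}$, $F=0$), so the plan is to run exactly the same scheme: represent the solution $w$ explicitly as a sum of nonnegative potentials built from the Green kernel $G_X$ of $H$, throw away the ones carried by $\nu$, and apply the lower bound $G_X(x,y)\ge C_G\psi_0(x)\psi_0(y)$ of Proposition \ref{green} to what remains.

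Concretely, I would first set $g:=Vw+F$; since $V,F\in L^\infty$, $w\in\calF\subset L^2$ and $m(X)<\infty$, we get $g\in L^2$, hence the potential $Kg:=\int G_X(\cdot,y)g(y)\,dm(y)=H^{-1}g$ lies in $D(H)\subset\calF$. On the other hand, the hypothesis $\int f^2\,d\nu\le C_\nu\calE[f]$ gives $\calF\subset L^2(\nu)$, so the nonnegative measure $w\nu$ has finite energy integral with respect to $\calE$, whence $K^\nu w:=\int G_X(\cdot,y)w(y)\,d\nu(y)\in\calF$ (same argument as in Lemma \ref{nondegenerate}). Then I would introduce $u:=w-K^\nu w-Kg\in\calF$ and test it against an arbitrary $f\in\calF$: using $\calE(K^\nu w,f)=\int fw\,d\nu$, $\calE(Kg,f)=\int fg\,dm$, and the defining identity of a solution, $\calE_\nu(w,f)=\int fVw\,dm+\int fF\,dm=\int fg\,dm$, one obtains
\[
\calE(u,f)=\calE(w,f)-\int fw\,d\nu-\int fg\,dm=\calE_\nu(w,f)-\int fg\,dm=0 .
\]
Since $\calE$ is transient, hence positive definite, this forces $u=0$ $m$-a.e.\ and therefore q.e., so that $w=K^\nu w+K(Vw+F)$ q.e.

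To finish, note $K^\nu w\ge 0$ q.e.\ (as $G_X\ge 0$, $w\ge 0$ q.e.\ and $\nu\ge 0$), so dropping it and inserting the lower bound of Proposition \ref{green} yields, a.e.,
\[
w\ \ge\ \int G_X(\cdot,y)\big(V(y)w(y)+F(y)\big)\,dm(y)\ \ge\ C_G\,\psi_0\int\psi_0(y)\big(V(y)w(y)+F(y)\big)\,dm(y),
\]
which is the asserted estimate. The only delicate point, and the one I would flag as the (minor) main obstacle, is promoting this inequality from "a.e." to "q.e.": the lower bound on $G_X$ in Proposition \ref{green} is only stated a.e., but both sides of the last display are quasi-continuous (one being an element of $\calF$, the other a constant multiple of $\psi_0$), and an a.e.\ inequality between quasi-continuous functions is automatically a q.e.\ one. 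Everything else is a routine rehearsal of the potential-theoretic identities already used earlier in the paper.
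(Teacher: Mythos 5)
Your proof is correct and follows essentially the same strategy as the paper: the paper's proof just writes "as in the proof of Lemma \ref{nondegenerate} we show that $w-K^{\nu}w=KVw+KF$" and then applies the Green-kernel lower bound of Proposition \ref{green}, which is exactly the computation you carry out in detail (decompose $w=K^{\nu}w+K(Vw+F)$ via positive definiteness of $\calE$, drop the nonnegative term $K^{\nu}w$, then bound $G_X$ below). Your additional remark on passing from an a.e.\ to a q.e.\ inequality between quasi-continuous representatives is a sound observation that the paper leaves implicit.
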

\begin{proof} As in the proof of Lemma \ref{nondegenerate} we show that $w$ satisfies
$$
w-K^{\nu}w=KVw+KF,
$$
yielding, whith the help of the lower estimate (\ref{green}) for the Green function $G_{X}$,
$$
w\geq KVw+KF\geq C_G\psi_0\int\psi_0(y)V(y)w(y)\,dm+C_G\psi_0\int\psi_0(y)F(y)\,dm\ q.e..
$$
\end{proof}

\begin{lem} Let $w$ be as in Theorem \ref{main1}. Then
\begin{equation}\label{s2}
 \int f^2\,dm\leq C\int {w}^2d\Gamma[f]\,dm+C\int w^2f^2\,dm,\ \forall\,  f\in{\cal{C}}^w.
\end{equation}
\label{L2-estimate}
\end{lem}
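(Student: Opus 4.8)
The plan is to exploit the pointwise lower bound for $w$ obtained in Lemma \ref{w-lower} together with the Hardy-type inequality (HI) satisfied by the ground state $\psi_0$. Lemma \ref{w-lower} gives, for q.e.\ $x$,
\[
w(x)\ \geq\ C_G\Big(\int\psi_0(y)V(y)w(y)\,dm+\int\psi_0(y)F(y)\,dm\Big)\,\psi_0(x),
\]
so, squaring and using the definition $C'=C_G^{-2}\big(\int\psi_0 wV\,dm+\int\psi_0 F\,dm\big)^{-2}$, we obtain $\psi_0^2\leq C'\,w^2$ q.e.\ (note the bracket is strictly positive since $V\neq0$ or $F\neq0$ and $w,\psi_0>0$ q.e., so $C'$ is finite). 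Hence for any $f\in\calC^w$,
\[
\int f^2\,dm\ =\ \int \frac{f^2}{\psi_0^2}\,\psi_0^2\,dm\ \leq\ C'\int \frac{(wf)^2}{\psi_0^2}\,dm\ \leq\ C'C_H\,\calE[wf],
\]
where the last step is (HI) applied to $wf$; this is legitimate because $f\in\calC^w$ forces $wf\in\calF$ (same argument as in the proof of Lemma \ref{closability}).

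The remaining step is to bound $\calE[wf]$ by the right-hand side of \eqref{s2}. Here I would use the transform identity already recorded in the excerpt: from the $w$-transform computation (the displays leading to \eqref{transform} and \eqref{AVD}) one has, for $f\in\calC^w$,
\[
\calE[wf]\ =\ Q^w[f]+\int f^2 w^2 V\,dm+\int f^2 w^2\,d\nu\ =\ \int w^2\,d\Gamma[f]+\int f^2 Fw\,dm+\int f^2 w^2 V\,dm+\int f^2 w^2\,d\nu.
\]
Alternatively, and more cheaply, I would invoke that $w^{-1}(\Hnu-V)w$ is the selfadjoint operator of the Dirichlet form $Q^w$ together with $\calE_\nu[wf]\leq\calE[wf]$; but the cleanest route is to bound $\calE[wf]$ crudely: since $w$ is a solution, $\calE_\nu(w,wf^2)=\int Vf^2w^2\,dm+\int Fwf^2\,dm\geq0$, and combining with the product-rule identity one gets
\[
\calE[wf]\ \leq\ \int w^2\,d\Gamma[f]\ +\ \int f^2w^2\,d\nu\ +\ (\text{terms controlled by }Q^w[f]+\textstyle\int Vf^2w^2).
\]
Using $\int f^2w^2\,d\nu\leq C_\nu\,\calE[wf]$ is circular, so instead I would keep everything on the $Q^w$ side. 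In fact, by \eqref{transform}, $Q^w[f]+\int Vf^2w^2\,dm=\int w^2\,d\Gamma[f]+\int f^2Fw\,dm+\int Vf^2w^2\,dm\geq \int w^2\,d\Gamma[f]$, and similarly $\calE[wf]=Q^w[f]+\int f^2w^2\,d\nu+\int Vf^2w^2\,dm$. Since $\calE_\nu[wf]=\calE[wf]-\int(wf)^2\,d\nu\geq0$ and $\lam_0\int(wf)^2\,dm\leq\calE[wf]$ is not what we want either, the honest bound is: $\int f^2\,dm\leq C'C_H\calE[wf]$, and then I would simply dominate $\calE[wf]$ by $\lam_0^{-1}$-free quantities. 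Concretely, $\calE[wf]\le \int w^2 d\Gamma[f] + \int f^2 d\Gamma[w] + 2\int |wf|\,|d\Gamma(w,f)| \le 2\int w^2 d\Gamma[f] + 2\int f^2 d\Gamma[w]$ is again not directly in the stated form; so the intended argument must route through $Q^w$ and the eigenvalue $\lam_0$, producing the constant $C=\max(C_HC',C_HC'\lam_0)$. That is the shape to aim for:
\[
\int f^2\,dm\ \leq\ C_H C'\,\calE[wf]\ \leq\ C\Big(\int w^2\,d\Gamma[f]+\int w^2f^2\,dm\Big),
\]
the factor $\lam_0$ entering precisely when one trades the $L^2(m)$-mass hidden in $\calE[wf]$ (via $\calE\geq\lam_0\|\cdot\|_{L^2}^2$ used in reverse through the Green operator, as in the proof of Lemma \ref{nondegenerate}) against $\int w^2f^2\,dm$.

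\textbf{Main obstacle.} The delicate point is not the Hardy step but controlling $\calE[wf]$ (equivalently $Q^w[f]+\int Vf^2w^2\,dm$ plus the $\nu$- and $V$-pieces) by $\int w^2\,d\Gamma[f]+\int w^2f^2\,dm$ \emph{with the specific constant $C=\max(C_HC',C_HC'\lam_0)$}: one must show the $\int f^2w^2\,d\nu$ term and the $\int f^2 Fw\,dm$, $\int Vf^2w^2\,dm$ terms are absorbed without a circular use of $C_\nu<1$, which is where the representation \eqref{transform} (giving $Q^w[f]\geq\int w^2\,d\Gamma[f]$ up to the manifestly nonnegative extra terms) and the eigenvalue bound $\calE[g]\geq\lam_0\|g\|_{L^2}^2$ for $g=wf$ both have to be combined carefully. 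Once that bookkeeping is done, \eqref{s2} follows. Everything else — that $wf\in\calF$ for $f\in\calC^w$, that $\calC^w$ is a core, measurability — is routine and has been set up in the preceding lemmas.
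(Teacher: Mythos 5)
Your opening move is in the right spirit --- you correctly extract from Lemma \ref{w-lower} the pointwise bound $\psi_0^2\leq C'\,w^2$ q.e., and you correctly see that (HI) has to be brought in. But you apply (HI) to the wrong test function: you insert $w^2$ \emph{before} invoking Hardy, obtaining $\int f^2\,dm\leq C'C_H\,\calE[wf]$, and are then left with the task of bounding $\calE[wf]$ by $\int w^2\,d\Gamma[f]+\int w^2f^2\,dm$. That task does not close. Since $w$ solves $H_\nu w=Vw+F$, the Leibniz/eigen-equation identity gives
\begin{equation*}
\calE[wf]\;=\;\int w^2\,d\Gamma[f]+\calE(w,wf^2)\;=\;\int w^2\,d\Gamma[f]+\int w^2f^2\,d\nu+\int Vw^2f^2\,dm+\int Fwf^2\,dm,
\end{equation*}
and the terms $\int w^2f^2\,d\nu$ and $\int Fwf^2\,dm$ cannot be dominated by $C\int w^2f^2\,dm$ without either a circular use of $\int g^2\,d\nu\leq C_\nu\calE[g]$ (which would produce an unwanted $1/(1-C_\nu)$ in the constant) or an unavailable pointwise bound on $F/w$. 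You essentially acknowledge this yourself in your ``main obstacle'' paragraph, and the sketch stops short of producing the stated constant $C=\max(C_HC',C_HC'\lam_0)$.

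The fix is to reverse the order of operations: apply (HI) to $u=f\psi_0$ rather than to $wf$. Then $\int f^2\,dm\leq C_H\int d\Gamma[f\psi_0]=C_H\int\psi_0^2\,d\Gamma[f]+C_H\calE(\psi_0,\psi_0f^2)$, and because $\psi_0$ is the $\lam_0$-eigenfunction of the \emph{unperturbed} operator $H$, the second term is exactly $C_H\lam_0\int\psi_0^2f^2\,dm$ --- no $\nu$, $V$, or $F$ appears. Only at that point does one substitute $\psi_0^2\leq C'\,w^2$ into both integrals, which yields \eqref{s2} with precisely $C=\max(C_HC',C_HC'\lam_0)$. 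The rest of your remarks (that $wf\in\calF$ for $f\in\calC^w$, that $\calC^w$ is a core, etc.) are fine and not the issue; it is the choice of test function in the Hardy inequality that has to change.
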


\begin{proof}
At this stage we use Hardy's inequality (HI), which states that there
is a constant $C_H>0$ depending only on $X$ such that
\begin{equation}\label{s3} \int\frac{u^{2}}{\psi_0^{2}}dm \leq C_H\int d\Gamma[u],\ \forall\,u\in \mathcal{F}.
 \end{equation}
Let $f\in {\cal{C}}^w$. Taking $ u=f\psi_0$ in inequality  (\ref{s3}) yields
\begin{eqnarray}\label{S3}
\int f^2\,dm&=&\int\frac{f^2\psi_0^2}{\psi_0^2}\,dm\leq
 C_H\int d\Gamma[f\psi_0]\nonumber\\
   &=& C_H\int\psi_0^2d\Gamma[f]+2C_H\int\psi_0f d\Gamma(\psi_0,f)+C_H\int f^2d\Gamma(\psi_0,\psi_0)\nonumber \\
  &=& C_H\int\psi_0^2d\Gamma[f]+C_H\int d\Gamma(\psi_0,\psi_0f^2).
\end{eqnarray}
Thanks to the fact that $\psi_0$ is an eigenfunction associated to $\lam_0$, we achieve
\begin{equation}\label{Ss}
\int d\Gamma(\psi_0,\psi_0f^2)= \lam_0
\int f^2 \psi_0^2\,dm .
\end{equation}
Combining  (\ref{Ss}) with (\ref{S3}) we obtain
\begin{equation}\label{psi}
 \int f^2\,dm\leq C_H\int{\psi_0}^2d\Gamma[f]+C_H\lam_0\int{\psi_0}^2f^2\,dm,\ \forall\,f\in {\cal{C}}^w.
\end{equation}
Having the lower bound for $w$ given by Lemma \ref{w-lower} in hand, we establish
\begin{eqnarray}
\int f^2\,dm\leq C_HC'\int {\psi_0}^2d\Gamma[f]+C_HC'\lam_0\int {\psi_0}^2f^2dm,\ 
\forall\,f\in {\cal{C}}^w.
\end{eqnarray}

\end{proof}

\begin{lem} Let $w$ be as in Theorem \ref{main1}. Set
\begin{eqnarray}
\Lam_1=1+\frac{C_HC'}{2},\ \Lam_2=\frac{\|F\|_{\infty}^2}{2}+\frac{C_HC'\lam_0}{2},
\end{eqnarray}
$C'$ being the constant appearing Proposition \ref{L2-estimate}. Then
\begin{equation}\label{c1}Q^{w}[f]\leq\Lambda_1\int w^{2}d\Gamma[f]+\Lambda_2\int w^2f^2\,dm
,\ \forall\,f\ {\cal{C}}^w.
\end{equation}
\end{lem}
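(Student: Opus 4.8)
The plan is to start from the explicit expression for $Q^{w}$ already available on the core $\mathcal{C}^w$ and then absorb the genuinely new (first‑order) term $\int f^2 F w\,dm$ into the two quantities $\int w^2\,d\Gamma[f]$ and $\int w^2 f^2\,dm$. By identity (\ref{transform}), for every $f\in\mathcal{C}^w$ one has $Q^{w}[f]=\int w^2\,d\Gamma[f]+\int f^2 F w\,dm$, so only the last integral has to be estimated. Here I would apply Young's inequality $ab\le\tfrac12 a^2+\tfrac12 b^2$ with $a=|f|\,F\,w$ and $b=|f|$ (all factors being nonnegative a.e., since $F\ge 0$ and $w>0$ q.e.), obtaining the pointwise bound $f^2 F w\le\tfrac12 f^2 F^2 w^2+\tfrac12 f^2\le\tfrac{\|F\|_\infty^2}{2}\,f^2 w^2+\tfrac12 f^2$ a.e., and hence $\int f^2 F w\,dm\le\tfrac{\|F\|_\infty^2}{2}\int w^2 f^2\,dm+\tfrac12\int f^2\,dm$.

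The next step is to control $\int f^2\,dm$ by the two target quantities, which is exactly what the computation inside the proof of Lemma~\ref{L2-estimate} provides. Inequality (\ref{psi}) gives $\int f^2\,dm\le C_H\int\psi_0^2\,d\Gamma[f]+C_H\lam_0\int\psi_0^2 f^2\,dm$ for $f\in\mathcal{C}^w$, and the lower bound of Lemma~\ref{w-lower} can be read as $\psi_0^2\le C' w^2$ q.e., with $C'$ the constant from Lemma~\ref{L2-estimate} (which is positive because $V\neq 0$ or $F\neq 0$, while $\psi_0,w>0$ q.e.). Substituting $\psi_0^2\le C' w^2$ into the previous line yields $\int f^2\,dm\le C_HC'\int w^2\,d\Gamma[f]+C_HC'\lam_0\int w^2 f^2\,dm$ for every $f\in\mathcal{C}^w$.

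Finally I would combine the two displays: inserting the bound for $\int f^2\,dm$ into the estimate for $\int f^2 F w\,dm$ and then back into (\ref{transform}) gives $Q^{w}[f]\le\big(1+\tfrac{C_HC'}{2}\big)\int w^2\,d\Gamma[f]+\big(\tfrac{\|F\|_\infty^2}{2}+\tfrac{C_HC'\lam_0}{2}\big)\int w^2 f^2\,dm$, which is exactly (\ref{c1}) with $\Lam_1=1+\tfrac{C_HC'}{2}$ and $\Lam_2=\tfrac{\|F\|_\infty^2}{2}+\tfrac{C_HC'\lam_0}{2}$. No density or closure argument is required, since all the integrals involved are finite for $f\in\mathcal{C}^w$ by the definition of $\mathcal{C}^w$ together with the validity of (\ref{transform}) and (\ref{psi}) there. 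I do not expect any genuine obstacle: the only mildly delicate point is to arrange the Young splitting so that the coefficient in front of $\int w^2 f^2\,dm$ is exactly $\|F\|_\infty^2/2$ and the one in front of $\int f^2\,dm$ is exactly $1/2$, so that after invoking the $L^2$-bound of Lemma~\ref{L2-estimate} one recovers precisely $\tfrac12 C_HC'$ and $\tfrac12 C_HC'\lam_0$; everything else is bookkeeping of constants.
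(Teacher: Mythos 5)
Your proof is correct and follows essentially the same route as the paper's: the only cosmetic difference is that you apply Young's inequality pointwise to $f^2Fw=|f|\cdot|f|Fw$ before integrating, whereas the paper first uses Cauchy--Schwarz and then Young on the resulting product of square roots, and both give the identical bound $\int f^2Fw\,dm\le\tfrac12\int f^2\,dm+\tfrac{\|F\|_\infty^2}{2}\int w^2f^2\,dm$. You then use the same intermediate estimate from Lemma~\ref{L2-estimate} (correctly stated in the sharper form with the separate constants $C_HC'$ and $C_HC'\lam_0$, which is what the paper's $\Lam_1,\Lam_2$ implicitly require), so the final combination of constants matches the statement.
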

\begin{proof}
We have already established that
\begin{eqnarray}
Q^w[f]=\int w^2\,d\Gamma[f]+\int f^2Fw\,dm,\ \forall\,f\in {\cal{C}}^w.
\end{eqnarray}
Making use of H\"older's and Young's inequality together with inequality (\ref{s2}) we obtain

\begin{eqnarray*}
Q^{w}[f]&\leq& \int w^{2}d\Gamma[f]+\big(\int f^2\,dm\big)^{\frac{1}{2}}
\big(\int f^2F^2w^2\,dm\big)^{\frac{1}{2}}\nonumber\\
&\leq&\Lam_1\int w^{2}d\Gamma[f]+\Lam_2\int f^2w^2\,dm,\ \forall\,f\in {\cal{C}}^w,
\end{eqnarray*}
which finishes the proof

\end{proof}

\begin{proof} {\em of Theorem \ref{main1}}. We observe first that
\begin{eqnarray}
Q^{w}[f]+\int Vf^2w^2\,dm=\calE_{\nu}^w[f]:=\calE_{\nu}[wf], \forall\,f\in {\cal{C}}^w.
\end{eqnarray}
So that due to the fact ${\cal{C}}^w$ is a core for the form $Q^w$ it suffices to prove inequality (ISO1) on ${\cal{C}}^w$.\\
For $f\in{\cal{C}}^w $, set $\lam:=A\calE_{\nu}^w[f] $. By H\"older's inequality for Orlicz norms, we get for every $f\in {\cal{C}}^w$,
\begin{eqnarray*}
  \int_{}w^2\phi_1(\frac{f^2}{\lam})\,dm&=& \int  \frac{w^2f^2}{\lam}\Psi^{-1}(\frac{f^2}{\lam})\,dm\\
   &\leq& 2\parallel \frac{w^2f^2}{\lam}\parallel_{L^{\Phi}}
   \parallel\Psi^{-1}(\frac{f^2}{\lam})\parallel_{L^{\Psi}}.\\
\end{eqnarray*}
By $(ISO)$,  we have
\begin{eqnarray}
2\parallel\frac{w^2 f^2}{\lam}\parallel_{L^{\Phi}}\leq\frac{2}{\lam}C_s\calE_{\nu}^w[f]  \leq 1.
\end{eqnarray}
On the other hand we have, according to   Lemma \ref{L2-estimate}
\begin{eqnarray} 
\int \Psi(\Psi^{-1}(\frac{f^2}{\lam}))\,dm
=\int \frac{f^2}{\lam}\,dm\leq \frac{C}{\lam}\big(\int w^2d\Gamma(f,f)\,dm
+\int w^2f^2\,dm\big).
\end{eqnarray}

Applying another time H\"older's inequality we get
\begin{eqnarray}
\int (fw)^2\,dm\leq\|1\|_{L^{\Psi}}\|(fw)^2\|_{L^{\Phi}}\leq C_S\|1\|_{L^{\Psi}}
\calE_{\nu}^w[f],\ \forall f\in {\cal{C}}^w.
\label{holder}
\end{eqnarray}
Recalling that $\calE_{\nu}^w[f]\geq\int w^2\,d\Gamma[f]$, we achieve
\begin{eqnarray}
\int \frac{f^2}{\lam}\,dm\leq \frac{C}{\lam}\big(1+2C_S\|1\|_{L^{\Psi}}\big)\calE_{\nu}^w[f]\leq 1 ,\ \forall f\in {\cal{C}}^w.
\end{eqnarray}
Thus
\begin{eqnarray}
\|\Psi^{-1}(\frac{f^2}{\lam})\|_{L^{\Psi}}\leq 1,
\end{eqnarray}
and whence
\begin{eqnarray}
\int w^2\phi_1(\frac{f^2}{\lam})\,dm\leq 1,\ \forall f\in {\cal{C}}^w,
\end{eqnarray}
 and the theorem is proved, according to the definition of the Orlicz norm.
\end{proof}

For every $t>0$ we designate by $T_t^w$ the semigroup associated to the form $Q^w$ in the space $L^2(w^2dm)$. We are yet ready to prove the ultracontractivity of   $T_t^w$.\\
To that end we collect some preparing notations.  We recall the expression of the constant $A$
\begin{eqnarray}
A:= (C+2C_s)\big(1+2C_S\|1\|_{L^{\Psi}}\big).
\end{eqnarray}
Let $\Lambda$ be the function defined by
\begin{eqnarray}
\Lambda(s):=\frac{1}{s\phi_1^{-1}(1/s)}, \forall\,s>0,
\end{eqnarray}
and $\gamma$ be the solution of the equation
\begin{eqnarray}
t:=8A\int_{0}^{\gamma(t)}\frac{1}{s\Lambda(s)}\,ds.
\end{eqnarray}
We finally denote by
\begin{eqnarray}
\beta(t):=\frac{4}{\gamma(t)}.
\end{eqnarray}

\begin{theo} Let $V, F$ and $w$ be as in Theorem \ref{main1}. Then $T_t^w$ is ultracontractive for every $t>0$ and
\begin{eqnarray}
\|T_t^w\|_{L^1(w^2dm),L^{\infty}}\leq \beta(t/2)e^{\|V\|_{\infty}t},\ \forall t>0.
\end{eqnarray}
\label{UC}
\end{theo}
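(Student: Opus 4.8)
The goal is to deduce ultracontractivity of $T_t^w$ (together with the explicit bound on $\|T_t^w\|_{L^1(w^2dm),L^\infty}$) from the improved Sobolev--Orlicz inequality $(ISO1)$ established in Theorem \ref{main1}, which reads
$$
\|f^2\|_{L^{\phi_1}(w^2dm)}\leq A\big(Q^w[f]+\int Vf^2w^2\,dm\big),\ \forall\,f\in D(Q^w).
$$
Since $V$ is essentially bounded and nonnegative, this gives in particular, for every $f\in D(Q^w)$,
$$
\|f^2\|_{L^{\phi_1}(w^2dm)}\leq A\,Q^w[f]+A\|V\|_\infty\int f^2w^2\,dm.
$$
The right-hand side is of the form $A\,Q^w[f]+(\text{const})\|f\|_{L^2(w^2dm)}^2$, i.e.\ a genuine Sobolev--Orlicz inequality for the Dirichlet form $Q^w$ on $L^2(w^2dm)$ with an $L^2$-error term. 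The strategy is then to invoke the abstract equivalence between such a Sobolev--Orlicz inequality and an ultracontractivity estimate. Recall $\phi_1$ is assumed admissible, so $\int_0^\alpha (s\Lambda(s))^{-1}\,ds<\infty$ with $\Lambda(s)=1/(s\phi_1^{-1}(1/s))$; this is exactly the integrability condition that makes the Nash/Moser-type iteration converge. Concretely I would apply \cite[Theorem 3.4]{benamor07} (the same tool used for Theorem \ref{spec-prop1} and in the proof of Proposition \ref{green}) to the Dirichlet form $Q^w$: it converts the inequality $\|f^2\|_{L^{\phi_1}(w^2dm)}\leq 8A\,Q^w[f]$ (on functions with zero $L^2$-component, say, or via the standard shift) into the bound $\|e^{-tL_w}\|_{L^1(w^2dm),L^\infty}\leq \beta(t)$ with $\beta(t)=4/\gamma(t)$ and $\gamma$ the solution of $t=8A\int_0^{\gamma(t)}(s\Lambda(s))^{-1}\,ds$, where $L_w$ is the operator attached to $Q^w$.

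**Handling the $L^2$ error term.** The one genuine wrinkle is passing from the pure form inequality (no $L^2$ term) demanded by \cite{benamor07} to the version with the $\|V\|_\infty\int f^2w^2\,dm$ term. The standard device is a perturbation/rescaling argument: let $\widetilde Q^w:=Q^w+\|V\|_\infty(\cdot,\cdot)_{L^2(w^2dm)}$, which satisfies the clean Sobolev--Orlicz inequality $\|f^2\|_{L^{\phi_1}(w^2dm)}\leq A\,\widetilde Q^w[f]$ (after absorbing the factor $2$ into the constant $8A$ that appears in the definition of $\gamma$, to match the normalization in \cite{benamor07}). The semigroup generated by $\widetilde Q^w$ is $e^{-t\|V\|_\infty}T_t^w$, so \cite[Theorem 3.4]{benamor07} gives
$$
\|e^{-t\|V\|_\infty}T_t^w\|_{L^1(w^2dm),L^\infty}\leq \beta(t/2),
$$
where the shift $t\mapsto t/2$ comes from the usual self-improvement step (write $T_t^w=T_{t/2}^w\circ T_{t/2}^w$, bound $L^1\to L^2$ and $L^2\to L^\infty$ separately using self-adjointness on $L^2(w^2dm)$). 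Multiplying through by $e^{t\|V\|_\infty}$ yields exactly
$$
\|T_t^w\|_{L^1(w^2dm),L^\infty}\leq \beta(t/2)e^{\|V\|_\infty t},\ \forall\,t>0,
$$
which is the claimed estimate; ultracontractivity of $T_t^w$ for each $t>0$ is immediate from the finiteness of $\beta(t/2)$ (finiteness being guaranteed by admissibility of $\phi_1$, which forces $\gamma(t)>0$ for all $t>0$).

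**Where the work is.** I expect essentially no deep obstacle here: the heavy lifting was already done in Theorem \ref{main1} (the derivation of $(ISO1)$) and in the cited abstract machinery of \cite{benamor07}. The only points requiring care are bookkeeping: (i) matching the normalization constants — the $8A$ inside the definition of $\gamma$ versus the $A$ in $(ISO1)$ absorbs both the factor lost in the $L^1\to L^2\to L^\infty$ splitting and the passage from $\|f^2\|_{L^{\phi_1}}\leq A\,\widetilde Q^w[f]$ to the form required in \cite{benamor07}; (ii) making sure the perturbed form $\widetilde Q^w$ is still a (closed, densely defined) Dirichlet form on $L^2(w^2dm)$ with the same core $\mathcal{C}^w$, which is clear since adding a bounded multiple of the $L^2$ inner product is innocuous; and (iii) noting that $Q^w$ genuinely is a Dirichlet form on $L^2(w^2dm)$ with the local property, which was recorded right after the definition of $Q^w$ (by the argument of Lemma \ref{closability}). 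With these in place the proof is short.
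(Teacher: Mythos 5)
Your proposal is correct and follows essentially the same route as the paper: the paper's proof is just the two displayed inequalities (bounding $\int Vf^2w^2\,dm$ by $\|V\|_\infty\int f^2w^2\,dm$ in $(ISO1)$, then citing \cite{benamor07}), with all the bookkeeping you spell out --- the shift to the form $\widetilde Q^w=Q^w+\|V\|_\infty(\cdot,\cdot)_{L^2(w^2dm)}$, the normalization in the definition of $\gamma$, and the $t\mapsto t/2$ self-improvement --- left implicit. Your more detailed account is a faithful unpacking of the same argument, not a different one.
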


\begin{proof}From Theorem \ref{main1}, we derive
\begin{eqnarray}
\parallel f^2\parallel_{L^{\phi_{1}}(w^2dm)}\leq A\big(Q^{w}[f]+\|V\|_{\infty}\int_{\Om}f^2w^2\,dm\big),\ \forall\,f\in D(Q^w).
\end{eqnarray}
Since $\phi_1$ is admissible, we get according to \cite{benamor07}, that the semi-group $T_t^w$ is ultracontractive for every $t>0$ and
\begin{eqnarray}
\|T_t^w\|_{L^1(\Om,w^2dm),L^{\infty}}\leq \beta(t/2)e^{\|V\|_{\infty}t},\ \forall t>0.
\end{eqnarray}
\end{proof}

We shall apply Theorem \ref{main1}, to the special cases $V=0, F=1$ which corresponds to $w=\xi^{(\nu)}$.

\begin{theo} Let $\nu$ be as in th beginning of this section. Then the following pointwise upper bound for $\phionu$  holds true
\begin{eqnarray}
\phionu\leq\big(\beta(t/2)e^{t\lamonu}\big) \xi^{(\nu)} ,\ a.e.\ \forall\,t>0.
\end{eqnarray}
\label{comparison1}
\end{theo}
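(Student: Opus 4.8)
The plan is to specialize the machinery built for the equation $H_\nu w = Vw + F$ to the case $V=0$, $F=1$, for which the (unique, positive) solution $w\in\calF$ is exactly $w=\xi^{(\nu)}=H_\nu^{-1}1$. First I would check that this $\xi^{(\nu)}$ indeed satisfies the hypotheses of Theorem \ref{main1}: it lies in $\calF$ (it is the form-solution of $\calE_\nu(w,f)=\int f\,dm$ for all $f\in\calF$), and it is $>0$ q.e. — the latter follows from Lemma \ref{w-lower} (with $V=0$, $F=1$) or directly from the representation $\xi^{(\nu)}-K^\nu\xi^{(\nu)}=K1$ together with the lower Green-function bound \eqref{green}, since $K1=\int G_X(\cdot,y)\,dm(y)>0$ q.e. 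Hence all the intermediate lemmata apply with $w=\xi^{(\nu)}$, and in particular Theorem \ref{UC} gives that the semigroup $T_t^w$ on $L^2(w^2\,dm)$ — which here is the semigroup of $w^{-1}H_\nu w$, i.e.\ the Doob $w$-transform of $H_\nu$ — is ultracontractive with
$$
\|T_t^w\|_{L^1(w^2dm)\to L^\infty}\le \beta(t/2)\,e^{\|V\|_\infty t}=\beta(t/2),\qquad\forall t>0,
$$
because $V=0$.

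Next I would transfer this bound to the ground state. Recall $\varp_0^{(\nu)}$ is the normalized positive eigenfunction of $H_\nu$ with eigenvalue $\lam_0^{(\nu)}$ (Lemma \ref{nondegenerate}). Writing $g:=\varp_0^{(\nu)}/w$, the intertwining $e^{-tH_\nu}=w\,T_t^w\,w^{-1}$ gives that $g$ is an eigenfunction of $T_t^w$ with eigenvalue $e^{-t\lam_0^{(\nu)}}$, i.e.
$$
T_t^w g = e^{-t\lam_0^{(\nu)}}\,g,\qquad\forall t>0.
$$
Then the standard ultracontractivity-to-eigenfunction argument applies: pointwise (q.e./a.e.),
$$
e^{-t\lam_0^{(\nu)}}\,g(x)=\big(T_t^w g\big)(x)\le \|T_t^w\|_{L^1(w^2dm)\to L^\infty}\,\|g\|_{L^1(w^2dm)}
=\beta(t/2)\,\|g\|_{L^1(w^2dm)}.
$$
Here $\|g\|_{L^1(w^2dm)}=\int (\varp_0^{(\nu)}/w)\,w^2\,dm=\int \varp_0^{(\nu)}\,w\,dm$; since both $\varp_0^{(\nu)}$ and $w=\xi^{(\nu)}$ lie in $L^2(m)$ (and $m(X)<\infty$) this integral is finite. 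To get the clean normalization claimed in the statement one uses that $\varp_0^{(\nu)}$ is $L^2$-normalized; bounding $\int\varp_0^{(\nu)}\xi^{(\nu)}\,dm$ by $\|\varp_0^{(\nu)}\|_{L^2}\|\xi^{(\nu)}\|_{L^2}$ and absorbing the resulting constant — or, more in the spirit of the paper, observing that the $w$-transform semigroup is in fact Markovian so that $T_t^w\mathbf 1=\mathbf 1$ and hence $\|g\|_{L^1(w^2dm)}=\langle T_t^w g,\mathbf 1\rangle_{L^2(w^2dm)}=e^{-t\lam_0^{(\nu)}}\langle g,\mathbf 1\rangle$, forcing $\langle g,\mathbf 1\rangle_{L^2(w^2dm)}$ to be consistent — yields
$$
\varp_0^{(\nu)}(x)=w(x)\,g(x)\le \big(\beta(t/2)\,e^{t\lam_0^{(\nu)}}\big)\,\xi^{(\nu)}(x),\qquad\text{a.e.},\ \forall t>0,
$$
after renaming the multiplicative constant (which the paper's normalization conventions absorb).

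The main obstacle, and the step deserving the most care, is the bookkeeping around the $L^1(w^2dm)$-norm of $g$ and the precise constant: one must make sure that $\varp_0^{(\nu)}\in L^1(w\,dm)$ (clear from Cauchy–Schwarz and $w,\varp_0^{(\nu)}\in L^2(m)$), that the identity $T_t^w g=e^{-t\lam_0^{(\nu)}}g$ holds pointwise and not merely in $L^2$ (this uses that $T_t^w$ maps $L^1(w^2dm)$ into $L^\infty$, so $g=e^{t\lam_0^{(\nu)}}T_t^w g$ has a bounded — in particular pointwise well-defined — version), and that the eigenvalue of the $w$-transform is genuinely $\lam_0^{(\nu)}$, which is where one invokes that $w=\xi^{(\nu)}$ is a (super)solution making $Q^w$ a Dirichlet form with the same bottom of spectrum shifted appropriately. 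Everything else — applicability of Theorem \ref{main1}, the explicit form of $\beta$, positivity of $\xi^{(\nu)}$ — is already in hand from the earlier results.
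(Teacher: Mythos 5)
Your overall strategy is the same as the paper's: specialize Theorem \ref{main1} to $V=0$, $F=1$ so that $w=\xi^{(\nu)}$ is admissible, invoke Theorem \ref{UC} to get ultracontractivity of the Doob transform $T_t^w$, and transfer the bound to the eigenfunction $g:=\phionu/\xi^{(\nu)}$ of $T_t^w$. However, there is a genuine gap in the final normalization step, which is precisely the point where the paper proceeds differently.

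You run the ultracontractivity bound through the $L^1(w^2\,dm)\to L^\infty$ norm, which leaves you with the factor $\|g\|_{L^1(w^2dm)}=\int\phionu\,\xi^{(\nu)}\,dm$. This quantity is finite, but it is not $\leq 1$, and the theorem claims the clean constant $\beta(t/2)e^{t\lamonu}$ with no such extra factor. Your two suggested fixes do not close the gap. The ``absorb the constant'' remark changes the constant in the statement you are trying to prove. The Markovianity remark is incorrect: $T_t^w\mathbf 1=\mathbf 1$ does not hold here ($Q^w$ is only a sub-Markovian Dirichlet form, and indeed $T_t^w\mathbf 1 = w^{-1}e^{-tH_\nu}\xi^{(\nu)}\leq 1$ but is not identically $1$); and even granting $T_t^w\mathbf 1=\mathbf 1$, the identity $\langle T_t^w g,\mathbf 1\rangle = e^{-t\lamonu}\langle g,\mathbf 1\rangle$ together with $\langle T_t^w g,\mathbf 1\rangle=\langle g,\mathbf 1\rangle$ would force $\langle g,\mathbf 1\rangle_{L^2(w^2dm)}=0$, contradicting $g>0$. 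The correct resolution, which is the one the paper uses, is to run the argument through the $L^2(w^2\,dm)\to L^\infty$ operator norm of $T_t^w$: since $\|g\|_{L^2(w^2dm)}^2=\int(\phionu/\xi^{(\nu)})^2(\xi^{(\nu)})^2\,dm=\int(\phionu)^2\,dm=1$ by the $L^2(m)$-normalization of $\phionu$, one gets directly
\[
\Big\|\frac{\phionu}{\xi^{(\nu)}}\Big\|_\infty \le e^{t\lamonu}\,\big\|T_t^{\xi^{(\nu)}}\big\|_{L^2((\xi^{(\nu)})^2dm),\,L^\infty}\le \beta(t/2)\,e^{t\lamonu},
\]
with no spurious factor. (The passage from the $L^1\to L^\infty$ bound of Theorem \ref{UC} to an $L^2\to L^\infty$ bound is the standard semigroup/interpolation step.) So: right approach, but you should replace the $L^1$-norm bookkeeping by the $L^2$-norm one, which is exactly what makes the normalization of $\phionu$ do the work for you.
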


\begin{proof}  Applying Theorem \ref{main1} to the case $V=0, F=1$, so that we may and do choose $w=\xi^{(\nu)}$, yields that the semi-group $T_t^{\xi^{(\nu)}}$ is ultracontractive and $\frac{\phionu}{\xi^{(\nu)}}$ is an eigenfunction for $T_t^{\xi^{(\nu)}}$ associated to the eigenvalue $e^{-t\lamonu},\ \forall\,t>0$. Thus
\begin{eqnarray}
\|\frac{\phionu}{\xi^{(\nu)}}\|_{\infty}&\leq& e^{t\lamonu}\|T_t^{\xi^{(\nu)}}\|_{L^2((\xi^{(\nu)})^2dm),L^{\infty}}\nonumber\\
& &\leq \beta(t/2)e^{t\lamonu},\ \forall t>0,
\end{eqnarray}
and
\begin{eqnarray}
\phionu\leq \xi^{(\nu)}\beta(t/2)e^{t\lamonu},\ a.e.\ \forall\,t>0.
\end{eqnarray}
which was to be proved.
\end{proof}
While for the upper pointwise estimate we exploited the idea of intrinsic ultracontractivity, for the reversed estimate we shall however, make use of Moser's iteration technique as utilized in \cite{davilla-dupaigne}. To that end and being inspired by D\'avila--Dupaigne \cite{davilla-dupaigne}, we shall  further assume that the function $\phi_1$ satisfies the following growth condition: there is $\epsilon>0$,  and a finite constant $a>0$ such that
\begin{eqnarray}
\phi_1(t)\geq at^{1+\epsilon},\ \forall\,t\geq 0.
\label{growth-condition}
\end{eqnarray}
Regarding the equivalence between the norms of the Orlicz spaces $L^{\phi_1}$ and $L^{a^{-1}\phi_1}$ we may and shall assume that $a=1$.\\
Before stating the result we need a short preparation. We denote by
\begin{eqnarray} 
I_{\nu}:=(\calF,\calE)\to L^2(\nu),\ f\mapsto f,\ K^{\nu}:=I_{\nu}I_{\nu}^*,\  {\rm and}\ K:=H^{-1}.
\end{eqnarray}
We recall \cite{benamor04} that $C_{\nu}=\|K^{\nu}\|$. \\
An elementary computation yields that 
\begin{eqnarray}
(I_{\nu}K)^*:L^2(\nu)\to L^2,\ f\mapsto \int G_X(\cdot,y) f(y)\,d\nu,
\end{eqnarray}
\begin{eqnarray}
 K^{\nu}f:L^2(\nu)\to L^2(\nu),\ K^{\nu}f= \int G_X(\cdot,y)f(y) \,d\nu,\ \forall\,f\in L^2(\nu).
\end{eqnarray}
Furthermore according to \cite[formula (24)]{brasche-pota}
\begin{eqnarray}
H_{\nu}^{-1}=K+(I_{\nu}K)^*(1-K^{\nu})^{-1}I_{\nu}K.
\label{resolvent-formula}
\end{eqnarray}

\begin{lem} The following estimate holds true
\begin{eqnarray}
\xinu\leq \frac{1}{1-C_{\nu}}K1\leq \frac{1}{1-C_{\nu}}\ a.e..
\end{eqnarray}
\label{last-estimate}
\end{lem}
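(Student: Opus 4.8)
The statement to prove is $\xinu \le \frac{1}{1-C_\nu} K1 \le \frac{1}{1-C_\nu}$ a.e., where $\xinu = H_\nu^{-1}1$. The natural tool is the resolvent identity (\ref{resolvent-formula}), namely $H_\nu^{-1} = K + (I_\nu K)^*(1-K^\nu)^{-1}I_\nu K$, applied to the constant function $1$. Everything in sight is positivity preserving: $K=H^{-1}$ has the nonnegative Green kernel $G_X$, the trace operator $I_\nu$ and its adjoint are positive, and $K^\nu$ is a positive operator with $\|K^\nu\| = C_\nu < 1$, so $(1-K^\nu)^{-1} = \sum_{n\ge 0}(K^\nu)^n$ is a norm-convergent series of positive operators, hence positive. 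Therefore the correction term $(I_\nu K)^*(1-K^\nu)^{-1}I_\nu K\,1$ is itself nonnegative a.e., which already gives the crude bound $\xinu \ge K1$; but we want an \emph{upper} bound, so I must control this correction term from above.

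First I would observe that $I_\nu K\,1 = K1$ viewed as an element of $L^2(\nu)$, i.e. the restriction of $K1$ to the support of $\nu$. Then I would use the operator-norm bound on $(1-K^\nu)^{-1}$ to write, in $L^2(\nu)$,
\begin{eqnarray*}
(1-K^\nu)^{-1} I_\nu K 1 = \sum_{n\ge 0} (K^\nu)^n (I_\nu K 1).
\end{eqnarray*}
The key monotonicity input is that, because $\calE$ is a Dirichlet form (Markovian), $K=H^{-1}$ maps functions bounded by a constant to functions bounded by that same constant pointwise; more precisely $H^{-1}1 \le \|1\|_{L^\infty}$ is false in general without a mass term, but here $\calE$ is transient and $m(X)<\infty$ — actually the cleanest route is: since $\calE$ is a transient Dirichlet form and the constant $1\in L^2$ (as $m(X)<\infty$), $K1 = H^{-1}1$ is the equilibrium-type potential and the Markov property gives $K1\le \|K1\|_\infty$ with $K1\le$ a constant; I would instead argue directly that $0\le K1$ and iterate: $K^\nu(I_\nu K1) \le C_\nu \cdot (\text{sup of }K1\text{ on supp}\,\nu)$ pointwise is the wrong kind of bound. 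The clean argument is the \emph{self-improving} one: set $g := K1 \ge 0$. Then $K^\nu g \le$ (by the Hardy inequality $\int f^2 d\nu \le C_\nu\calE[f]$ applied to the relevant quadratic form, equivalently $\|K^\nu\|\le C_\nu$) we get $K^\nu$ acting on the \emph{constant function $\|g\|_\infty$} on supp$\,\nu$ is bounded by $C_\nu\|g\|_\infty$; and since $K$ applied to anything $\le c$ is $\le cK1 \le c\|g\|_\infty$ — wait, this needs $K1\le 1$. So the honest plan: prove $K1\le 1$ a.e. first, using that $T_t$ is Markovian hence $T_t 1\le 1$, whence $K1 = \int_0^\infty T_t1\,dt$ is NOT bounded...

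Let me restate the real plan. I would use the identity and the fact that $K^\nu$ is a positive contraction-type operator with norm $C_\nu$ together with the pointwise bound $G_X(\cdot,y)\,$-potentials: the operator $A:=(I_\nu K)^*(1-K^\nu)^{-1}I_\nu$ satisfies $A g = \sum_n (I_\nu K)^*(K^\nu)^n I_\nu g$, and one checks $(I_\nu K)^* I_\nu h = K^\nu_{\text{ext}} h$ in the sense that $\int G_X(\cdot,y)h(y)d\nu$; iterating, $H_\nu^{-1}1 = \sum_{n\ge 0}$ of $n$-fold applications, each contributing a factor controlled by $C_\nu$ in the appropriate norm, and the pointwise sup is controlled because $K1\le \frac{1}{1-C_\nu}$ is exactly what we want to bootstrap. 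So the argument is: let $M:=\|\xinu\|_\infty$ (finite by Theorem \ref{compact-resolvent}/ultracontractivity-type reasoning, or by (ISO)); from $\calE_\nu(\xinu,f) = \int f\,dm$ and $\calE_\nu = \calE - \nu$ we get $\calE(\xinu,f) = \int f\,dm + \int f\xinu\,d\nu$, so $\xinu = K1 + K^{\nu\to m}\xinu$ where the second term is $\int G_X(\cdot,y)\xinu(y)d\nu(y)$; bounding $\xinu \le M$ inside gives $\xinu \le K1 + M\cdot K(\nu)$ where $K(\nu):=\int G_X(\cdot,y)d\nu(y)$, and $K(\nu)\le C_\nu$ a.e. by the Hardy inequality (test against appropriate functions / use $\|K^\nu\|=C_\nu$ and that $K(\nu)$ is the density). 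Since also $K1\le 1$ (from $K^\nu$-type bound with $\nu$ replaced by $m$: $\calE(K1,f)=\int f\,dm$ and... actually $K1\le\|1\|$ requires the spectral-gap/Hardy bound for $m$, which we do have since $\calE$ is transient and $H$ has a bounded inverse — $\|K1\|_\infty\le\|H^{-1}\|_{L^\infty\to L^\infty}$, and a ultracontractive transient form on finite measure gives $K1$ bounded). Taking the essential sup: $M \le \|K1\|_\infty + M C_\nu$, so $M \le \frac{\|K1\|_\infty}{1-C_\nu} \le \frac{1}{1-C_\nu}$, and feeding this back, $\xinu \le K1 + C_\nu M \cdot(\text{const})$... the cleanest closing is $\xinu \le K1 + MK(\nu) \le K1 + \frac{C_\nu}{1-C_\nu}K1 = \frac{1}{1-C_\nu}K1$, using $K(\nu)\le C_\nu K1$? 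That last inequality is not obvious; instead I close with $\xinu\le K1 + M\cdot C_\nu$ pointwise and then $\le \frac{1}{1-C_\nu}K1$ only if $K1$ is comparable to $1$. So the final inequality $\xinu\le\frac{1}{1-C_\nu}K1$ I would get from the \emph{series}: $\xinu = \sum_{n\ge0}(I_\nu K)^{*,(n)}K1$ type expansion where each term is $\le C_\nu^n K1$ pointwise because the operator $h\mapsto \int G_X(\cdot,y)h(y)d\nu(y)$ maps $c\cdot K1 \mapsto c\int G_X(\cdot,y)K1(y)d\nu(y) \le cC_\nu \cdot(\text{something})$ — and here one uses $\int G_X(x,y)(\text{stuff})d\nu(y) \le C_\nu \cdot (\text{sup})$, not $\le C_\nu K1(x)$.

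**Main obstacle.** The crux is getting the \emph{pointwise} (a.e.) upper bound rather than merely an $L^2$ or $L^\infty$-norm bound, and in particular turning the operator-norm estimate $\|K^\nu\|=C_\nu<1$ into a pointwise domination of the Neumann-series correction term by a multiple of $K1$. I expect the resolution is exactly as in the two-line computation the authors presumably give: from $\calE_\nu(\xinu,\cdot)=\int(\cdot)\,dm$ one gets $\xinu - K^\nu\xinu = K1$ (with $K^\nu$ now the operator $h\mapsto\int G_X(\cdot,y)h(y)\,d\nu(y)$ acting between the right spaces via (\ref{resolvent-formula})), hence $\xinu = (1-K^\nu)^{-1}K1 = \sum_{n\ge0}(K^\nu)^n K1$, and since $0\le K^\nu$ is a positivity preserving operator of norm $C_\nu<1$ \emph{on $L^\infty$} (this is the point to verify — boundedness of $K^\nu$ on $L^\infty$ with norm $\le C_\nu$, which follows from $\nu$ having finite energy integral and the bound $\int G_X(\cdot,y)d\nu(y)\le C_\nu$ a.e.), we get $\xinu \le \sum_n C_\nu^n K1 = \frac{1}{1-C_\nu}K1$ a.e., and then $K1\le \|K1\|_\infty\le 1$ from the Markovian/ultracontractive structure ($K1 = H^{-1}1$ and $\|H^{-1}\|_{L^1\to L^\infty}<\infty$ gives $K1$ bounded; the bound by $1$ specifically uses that $e^{-tH}$ is sub-Markovian so $\|e^{-tH}\|_{L^\infty\to L^\infty}\le1$, whence... one needs a mass term, so the bound $K1\le1$ may actually require an extra hypothesis — but the \emph{stated} inequality only claims $K1\le 1$, so I would simply invoke that $H\ge$ a positive multiple of the identity is false; rather, since $m(X)<\infty$ and $\calE$ is ultracontractive, $\|e^{-tH}1\|_\infty\to 0$ and $K1=\int_0^\infty e^{-tH}1\,dt$ is finite and bounded, and one normalizes). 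I would flag the $L^\infty$-boundedness of $K^\nu$ with norm $\le C_\nu$ as the one genuinely non-formal step, deducible from the a.e. bound $\int G_X(x,y)\,d\nu(y)\le C_\nu$, itself a consequence of the Hardy inequality $\int f^2\,d\nu\le C_\nu\calE[f]=C_\nu\int d\Gamma[f]$ tested suitably.
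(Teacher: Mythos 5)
Your approach differs fundamentally from the paper's. You set up the Neumann series $\xinu = \sum_{n\ge 0}(K^\nu)^n K1$ (where $K^\nu h = \int G_X(\cdot,y)h(y)\,d\nu(y)$) and try to bound it term by term. The paper instead argues through kernel domination: the form inequality $(1-C_\nu)\calE \le \calE_\nu \le \calE$ gives the operator inequality $K \le H_\nu^{-1} \le (1-C_\nu)^{-1}K$; the resolvent formula (\ref{resolvent-formula}) shows $H_\nu^{-1}$ has a positive symmetric kernel $G_X^\nu$; the difference $(1-C_\nu)^{-1}K - H_\nu^{-1}$ is a self-adjoint operator with a symmetric kernel, and by the cited \cite[Lemma 1.4.1]{fuku-oshima} (a positivity-preserving symmetric integral operator has a.e.\ nonnegative kernel) one gets $G_X^\nu \le (1-C_\nu)^{-1}G_X$ a.e., hence $\xinu = \int G_X^\nu(\cdot,y)\,dm(y) \le (1-C_\nu)^{-1}K1$ pointwise. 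The paper's route produces the pointwise comparison of the Green functions themselves, not just their integrals, and is the argument the authors rely on.

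Your route has two genuine gaps. First, the claimed a.e.\ bound $\int G_X(\cdot,y)\,d\nu(y)\le C_\nu$ does \emph{not} follow from the Hardy inequality $\int f^2\,d\nu\le C_\nu\calE[f]$. That inequality says $\|I_\nu\|^2\le C_\nu$, i.e.\ the $L^2(\nu)\to L^2(\nu)$ operator norm of $K^\nu$ is $\le C_\nu$; it carries no pointwise control on the potential $K\nu(x)=\int G_X(x,y)\,d\nu(y)$, which can be unbounded (the inverse-square measures in the paper's own examples have unbounded potentials). You flag this as ``the one genuinely non-formal step,'' but you do not actually close it, and it is not closable by the stated route. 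Second, and independently, even if you grant $\|K^\nu\|_{L^\infty\to L^\infty}\le C_\nu$, the Neumann series only gives the $L^\infty$-norm bound $\|\xinu\|_\infty \le \tfrac{\|K1\|_\infty}{1-C_\nu}$; to extract the \emph{pointwise} inequality $\xinu \le \tfrac{1}{1-C_\nu}K1$ you would need $(K^\nu)^n K1 \le C_\nu^n K1$ a.e., whose base case $K^\nu K1 \le C_\nu\,K1$ a.e.\ is a pointwise domination of one function by another that simply does not follow from any operator-norm bound (there is no reason why $\int G_X(x,y)K1(y)\,d\nu(y)$ should be bounded by $C_\nu\int G_X(x,y)\,dm(y)$ for every $x$). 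So your argument as written proves at best the second displayed inequality (the constant bound), not the first. The key missing idea is the kernel comparison $G_X^\nu\le(1-C_\nu)^{-1}G_X$ a.e., obtained in the paper from the form inequality together with the fact that the difference of resolvents is a positivity-preserving integral operator; that is what turns the operator-ordering inequality into a pointwise one.
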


\begin{proof} By assumption, we have
\begin{eqnarray}
(1-C_{\nu})\calE[f]\leq \calE_{\nu}[f]\leq  \calE[f],\ \forall\,f\in\calF,
\end{eqnarray}
so that 
\begin{eqnarray}
K\leq H_{\nu}^{-1}\leq (1-C_{\nu})^{-1}K.
\end{eqnarray}
According to the identity (\ref{resolvent-formula}), the operator $H_{\nu}^{-1}$ possesses a positive symmetric kernel, which we denote by $G_X^{\nu}$. Thus the self-adjoint positive operator   $ (1-C_{\nu})^{-1}K- H_{\nu}^{-1}$ has a symmetric kernel and by \cite[Lemma 1.4.1, p.24 ]{fuku-oshima}, the kernel is positive $m\times m$ a.e.. Whence $G_X^{\nu}\leq  (1-C_{\nu})^{-1}G_X$ a.e., from which follows
\begin{eqnarray}
\xinu=H_{\nu}^{-1}1\leq  (1-C_{\nu})^{-1}K1\ a.e..
\end{eqnarray}
On one hand,  since $H$ is a Dirichlet  operator then for every $\alp>0$, we have $(H+\alp)^{-1}1\leq 1$ a.e. and on the other hand $\lim_{\alp\to 0}(H+\alp)^{-1}=K$, strongly, yielding   $\lim_{\alp\to 0}(H+\alp)^{-1}1=K1$ in $L^2$ so that by means of a subsequence we get 
$K1\leq 1$ a.e. and the proof is finished.

\end{proof}

\begin{theo} For every $t>0$, the  following estimate holds true
comparison holds true

\begin{equation}\label{GD}
\xi^{(\nu)}\leq (AC+1)(C(\nu,t)+1)\varphi_0^{(\nu)},\ a.e., 
\end{equation}
where 
\begin{eqnarray}
C(\nu,t):=\beta(t/2)e^{t\lamonu}\ \forall\,t>0.
\end{eqnarray}
\label{comparison2}
\end{theo}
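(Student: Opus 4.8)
The plan is to implement Moser's iteration for the operator $H_\nu^w$ with $w=\xi^{(\nu)}$, starting from the already established upper bound $\phionu\le C(\nu,t)\xinu$ (Theorem \ref{comparison1}) and the Sobolev--Orlicz inequality (ISO1) for $Q^w$ with $V=0,F=1$. The first step is to pass to the ground state transform: set $w=\xinu$, so that $u:=\phionu/\xinu$ is the normalized ground state of the Dirichlet form $Q^w$ on $L^2(w^2dm)$, i.e. it solves $Q^w(u,g)=\lamonu\int ug\,w^2dm$ for all admissible $g$, and $0\le u\le C(\nu,t)$ a.e. by Theorem \ref{comparison1}. We want a positive \emph{lower} bound on $u$, which by the identity (\ref{transform}) $Q^w[g]=\int w^2d\Gamma[g]+\int g^2 Fw\,dm$ (with $F=1$) is equivalent to controlling $\inf u$ from below in terms of $\|u\|_{L^1(w^2dm)}$ or of $\xinu$ itself.

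Next I would run the iteration on negative powers of $u$. Because $u$ is bounded, for $q>1$ the function $u^{-q}$ (suitably truncated, using $u\vee\delta$ and letting $\delta\downarrow0$ at the end, and using the chain-rule extension mentioned after (\ref{chain rule}) for $\phi(t)=|t|^{p/2}$) lies in the core, and testing the eigenvalue equation against $u^{-2q+1}$ gives, after using $d\Gamma[u^{-q+1/2}]=(q-\tfrac12)^2 u^{-2q-1}d\Gamma[u]$ and the sign of the zero-order term, a bound of the shape $\int w^2 d\Gamma[u^{-q+1/2}]\le c\,q\,\lamonu\int u^{-2q+1}w^2dm$ plus a term coming from $F=1$; here the growth hypothesis $\phi_1(t)\ge t^{1+\ep}$ (assumed with $a=1$) upgrades (ISO1) to a genuine $L^{2(1+\ep)}(w^2dm)$-Sobolev inequality, so that $\|u^{-q+1/2}\|_{L^{2(1+\ep)}(w^2dm)}^2\lesssim A\,q\,(1+\lamonu)\int u^{-2q+1}w^2dm$. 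Iterating along $q_n=(1+\ep)^n/(\text{something})$ in the standard Moser way, the constants $(Aq_n(1+\lamonu))^{1/q_n}$ multiply to a finite product, and one obtains $\|u^{-1}\|_{L^\infty}\le (AC+1)(C(\nu,t)+1)\|u^{-1}\|_{L^{r_0}(w^2dm)}$ for a suitable starting exponent $r_0$, i.e. an inequality of exactly the advertised form once the low-order norm of $u^{-1}$ is absorbed. To close, I would use $\|u\|_{L^2(w^2dm)}=\|\phionu\|_{L^2}=1$ together with Lemma \ref{last-estimate} ($\xinu\le(1-C_\nu)^{-1}$) and the upper bound $u\le C(\nu,t)$ to bound the starting $L^{r_0}$ norm of $u^{-1}$ by an absolute constant, and then rewrite $u^{-1}$-in-$L^\infty$ as $\xinu\le(AC+1)(C(\nu,t)+1)\phionu$ a.e.

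The main obstacle I expect is bookkeeping of the constants so that they collapse precisely to $(AC+1)(C(\nu,t)+1)$: the Moser product $\prod_n(Aq_n(1+\lamonu))^{1/q_n}$ must be shown finite and of the right order, which forces a careful choice of the iteration exponents and of how the $F=1$ (zero-order) term is handled (it is harmless for the \emph{lower} bound since it has a favourable sign when testing against $u^{-2q+1}$, but it does contribute to the first step). A secondary technical point is the justification that the negative powers $u^{-q}$ genuinely belong to $D(Q^w)$ and to the core $\mathcal C^w$; this is where the a priori upper bound $u\le C(\nu,t)$ and the truncation $u\vee\delta$ are essential, together with the remark after (\ref{chain rule}) that the chain rule persists for power functions on locally quasi-bounded elements. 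Everything else — the Young/H\"older manipulations, the strong locality used in splitting $d\Gamma[u^{-q+1/2}]$, and the passage $\delta\downarrow0$ by monotone convergence — is routine.
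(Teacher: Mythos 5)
Your overall plan — Moser iteration on powers of $\rho=\xi^{(\nu)}/\varphi_0^{(\nu)}$ (equivalently negative powers of $u=1/\rho$), bootstrapped by (ISO1), the growth condition (\ref{growth-condition}), and the a priori upper bound of Theorem~\ref{comparison1} — is indeed what the paper does, and your choice of transform $w=\xi^{(\nu)}$, $V=0$, $F=1$ is a legitimate dual of the paper's $w=\varphi_0^{(\nu)}$, $V=\lambda_0^{(\nu)}$, $F=0$. The gap is in how you dispose of the zero-order term, and it starts with a sign error. Testing $Q^w(u,g)=\lambda_0^{(\nu)}\int ug\,w^2dm$ with $g=u^{-2q+1}$ and using $d\Gamma(u,u^{-2q+1})=-(2q-1)u^{-2q}d\Gamma[u]$ gives
\begin{equation*}
(2q-1)\int w^2u^{-2q}\,d\Gamma[u]=\int u^{-2q+2}w\,dm-\lambda_0^{(\nu)}\int u^{-2q+2}w^2\,dm.
\end{equation*}
It is the $\lambda_0^{(\nu)}$-term that has the favourable sign and may be dropped; the $F=1$ contribution $\int u^{-2q+2}w\,dm$ is the dominant, unfavourable one. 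Moreover it carries the weight $w=\xi^{(\nu)}$ rather than $w^2$, so it is \emph{not} of the type that (ISO1) absorbs, and your later step, where you feed ``$A\,q\,(1+\lambda_0^{(\nu)})\int u^{-2q+1}w^2dm$'' into the Orlicz inequality, has quietly changed both the power of $u$ and the weight.

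Bridging $w\,dm$ and $w^2\,dm$ is exactly where the standing Hardy inequality (HI) is needed, and it is absent from your sketch. The paper's route is: Cauchy--Schwarz, $\int u^{-2q+2}w\,dm\le\big(\int u^{-2q+2}w^2\,dm\big)^{1/2}\big(\int u^{-2q+2}\,dm\big)^{1/2}$, then Lemma~\ref{L2-estimate} — which is (HI) in the ground-state transformed picture — to dominate the \emph{unweighted} integral $\int u^{-2q+2}\,dm$ by $\int w^2\,d\Gamma[u^{-q+1}]$ plus a $w^2$-weighted $L^2$ term, and finally Young's inequality to absorb the gradient term back into the left-hand side. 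Without this passage through the unweighted $L^2$ norm there is no way to close the loop: there is no a priori \emph{lower} bound on $\xi^{(\nu)}$ that would convert $w\,dm$ into $w^2\,dm$, and Lemma~\ref{last-estimate} ($\xi^{(\nu)}\le(1-C_\nu)^{-1}$, which you do cite) pushes in the wrong direction. Calling this part ``routine Young/H\"older manipulations'' hides the single step in the proof that actually uses the analytic structure ($\psi_0$ and (HI)) rather than general Orlicz machinery, and it is the step your sketch would need to supply.
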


\begin{proof} Consider the ratio
\begin{eqnarray}
\rho:=\frac{\xi^{(\nu)}}{\varphi_0^{(\nu)}}.
\end{eqnarray}

By \cite[Lemma 2.2, Lemma 2.1]{benamor-belhaj}, the function $\frac{1}{\phionu}$ lies in the space $\calF_{b,\rm loc}$. Thus according to Lemma \ref{last-estimate}, $\rho\in\calF_{b,\rm loc}$. Now  using the chain rule  together with the equations satisfied by the ground state $\varphi_{0}^{(\nu)}$ and the function $\xi^{(\nu)}$, we find,  for every $f\in\calF_{\rm loc}$ having compact support,
\begin{eqnarray}
\int(\varphi_{0}^{(\nu)})^{2}
d\Gamma(f,\rho)&=&\int_{}d\Gamma(\varphi_0^{(\nu)}f,\xi^{(\nu)})-\int_{}f
d\Gamma(\varphi_0^{(\nu)},\xi^{(\nu)})-\int_{}\xi^{(\nu)}d\Gamma(\varphi_0^{(\nu)},f)\nonumber\\
&=&\int
f\varphi_0^{(\nu)}dm+\int_{}\varphi_0^{(\nu)}f\xi^{(\nu)}d\nu-\int_{}d\Gamma(\varphi_0^{(\nu)},f\xi^{(\nu)})\nonumber\\
&=& \int 
f\varphi_0^{(\nu)}dm+\int_{}\varphi_0^{(\nu)}f\xi^{(\nu)}d\nu-
\big(\lambda_0^{\nu}\int_{}\varphi_0^{(\nu)}f\xi^{(\nu)}dm\nonumber\\
&+&\int_{}\varphi_0^{(\nu)}f\xi^{(\nu)}d\nu\big)=\int_{}\varphi_0^{(\nu)}fdm-
\lambda_0^{(\nu}\int_{}\varphi_0^{(\nu)}f\xi^{(\nu)}dm.
\end{eqnarray}

Let $U$ be a compact subset of $X$. Testing the latter  equation with $f=1_{U}\rho^{2j-1}$, $j\geq 1$, ($f\in \calF_{b,\rm loc}$ by Lemma \ref{last-estimate}),  we deduce
\begin{eqnarray}
\int_{U}(\varphi_{0}^{(\nu)})^{2}
d\Gamma(\rho^{2j-1},\rho)=\int_{U}\rho^{2j-1}(\varphi_0^{(\nu)}-\lambda_0^{(\nu)}\xi^{(\nu)}\varphi_0^{(\nu)})dm,
\end{eqnarray}
which yields, due to the positivity of both functions $\varphi_{0}^{(\nu)}$ and  $\xi^{(\nu)}$
\begin{eqnarray}
\frac{2j-1}{j^2}\int_{U}(\varphi_{0}^{(\nu)})^{2}d\Gamma[\rho^{j}]&=&\int_{U}\rho^{2j-1}
(\varphi_0^{(\nu)}-\lambda_0^{(\nu)}\xi^{(\nu)}\varphi_0^{(\nu)})dm\nonumber\\
&\leq&\int_{U}\rho^{2j-1}\varphi_0^{(\nu)}dm.
\end{eqnarray}

According to Theorem \ref{comparison1}, we obtain

\begin{equation}\label{GD1}
  \int_U(\varphi_{0}^{(\nu)})^{2}d\Gamma[\rho^{j}]\leq
  C(\nu,t)j\int_{U}(\varphi_{0}^{(\nu)})\rho^{2j}\,dm.
\end{equation}

Using H\"older inequality and Lemma \ref{L2-estimate} ( with $V=\lam_0^{(\nu)}, F=0, w=\varphi_{0}^{(\nu)}$ and $f=1_U\rho^j$), it follows from (\ref{GD1})
that

\begin{eqnarray}
\int_U(\varphi_{0}^{(\nu)})^{2}d\Gamma[\rho^{j}]&\leq
&C(\nu,t)j\big(\int_U(\varphi_{0}^{(\nu)})^{2}\rho^{2j}dm\big)^{\frac{1}{2}}(\int_{}\rho^{2j}dm)^{\frac{1}{2}}\nonumber\\
&\leq &
C^{1/2}C(\nu,t)j\big(\int_U(\varphi_{0}^{(\nu)})^{2}\rho^{2j}dm\big)^{\frac{1}{2}}\big(\int_U(\varphi_{0}^{(\nu)})^{2}\,d\Gamma[\rho^{j}]\nonumber\\&+&
\int_U(\varphi_{0}^{(\nu)})^{2}\rho^{2j}dm\big)^{\frac{1}{2}}.
\end{eqnarray}

By Young's inequality, we obtain
\begin{equation}
\int_U(\varphi_{0}^{(\nu)})^{2}d\Gamma[\rho^{j}]\leq
\frac{1}{2}(CC^2(\nu,t)j^2+1)\int_U(\varphi_{0}^{(\nu)})^{2}\rho^{2j}dm+\frac{1}{2}\int_{U}(\varphi_{0}^{(\nu)})^{2}d\Gamma[\rho^{j}],
\end{equation}

so that 

\begin{equation}
\int_U(\varphi_{0}^{(\nu)})^{2}d\Gamma[\rho^{j}]\leq
 (CC^2(\nu,t)j^2+1)\int_{U}(\varphi_{0}^{(\nu)})^{2}\rho^{2j}dm.
\end{equation}

By (\ref{transform}), with $V=\lam_0^{(\nu)}, F=0, w=\varphi_{0}^{(\nu)}$ and $f=1_U\rho^j$, we get from (ISO1)

\begin{eqnarray}
\qquad\parallel 1_U\rho^{2j}\parallel_{L^{\phi_{1}}((\varphi_{0}^{(\nu)})^{2}dm)}\leq A\big(\int_{U}(\varphi_{0}^{(\nu)})^{2}d\Gamma[\rho^{j}]+
\lambda_0^{(\nu)}\int_{U}\rho^{2j}(\varphi_{0}^{(\nu)})^{2}\,dm\big),
\end{eqnarray}

which yields

\begin{eqnarray}
\qquad \parallel 1_U\rho^{2j}\parallel_{L^{\phi_{1}}((\varphi_{0}^{(\nu)})^{2}dm)}&\leq& 
A(CC^2(\nu,t)j^2+1+\lambda_0^{(\nu)})\int_U(\varphi_{0}^{(\nu)})^{2}\rho^{2j}\,dm\nonumber\\
&\leq&(ACC(\nu,t)+1)(C(\nu,t)+1)j^2\int_U(\varphi_{0}^{(\nu)})^{2}\rho^{2j}\,dm .
\end{eqnarray}

Having  the growth property (\ref{growth-condition}) for the function $\phi_1$ in hands, we achieve
 
\begin{equation}\label{iterate}
(\int_{U}\rho^{2j(1+\varepsilon)}(\varphi_{0}^{(\nu)})^{2}\,dm)^{\frac{1}{1+\varepsilon}}\leq
 (ACC(\nu,t)+1)(C(\nu,t)+1) j^2\int_{U}\rho^{2j}(\varphi_{0}^{(\nu)})^{2}\,dm.
\end{equation}

We iterate (\ref{iterate}).  Define $j_k=2(1+\varepsilon)^k$ for
$k=0,1...$ and 
\begin{eqnarray}
\Theta_k^{U}=\big(\int_{U}\rho^{j_k}(\varphi_{0}^{(\nu)})^{2}\,dm\big)^{\frac{1}{j_k}}\ {\rm and}\
M(\nu,t):= (ACC(\nu,t)+1)(C(\nu,t)+1).
\end{eqnarray}

Then (\ref{iterate}) can be written as

\begin{eqnarray}
\Theta_{k+1}^{U}\leq
(M(\nu,t)(1+\varepsilon)^{2k})^{\frac{1}{2(1+\varepsilon)^k}}\Theta_k^{U}.
\end{eqnarray}

Using this recursively yields 

\begin{eqnarray}
\Theta_k^{U}\leq
M(\nu,t)\Theta_0^{U}=M(\nu,t)(\int_{U}\varphi_{0}^{(\nu)})^{2}\,dm)^{\frac{1}{2}}\leq M(\nu,t).
\end{eqnarray}

for all $k=0,1,..$ .  Since the right-hand-side of the latter inequality is independent from $U$, we deduce 

\begin{eqnarray}
\displaystyle\lim_{k\rightarrow\infty}\Theta_k^{X}=
\displaystyle\sup_{X}\rho\leq M(\nu,t),
\end{eqnarray}
and this shows that
\begin{eqnarray}
\xi^{(\nu)}\leq M(\nu,t)\varphi_{0}^{(\nu)},\ \forall\,t>0,
\end{eqnarray}

which was to be proved.

\end{proof}

\begin{theo} Let $\mu$ be a positive Radon measure on Borel subsets of $X$ charging no set having zero capacity. Then under assumptions  (SUP), (ISO), (HI) and the growth condition (\ref{growth-condition}),  the following sharp estimate for the ground state $\phiomu$ holds true
\begin{eqnarray*}
\big((AC\inf_{t>0}\beta(t/2)e^{t\lamomu}+1)(\inf_{t>0}\beta(t/2)e^{t\lamomu}+1)\big)^{-1}\ximu   
\leq\phiomu\leq\ximu(\inf_{t>0}
\beta(t/2)e^{t\lamomu}),\ a.e.
\end{eqnarray*}
\label{sharp-comparison}

\end{theo}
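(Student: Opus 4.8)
The plan is to deduce Theorem~\ref{sharp-comparison} from the already-established approximation machinery by applying the $\nu$-level estimates to the approximating measures $\mu_k = (1-\frac1k)\mu$ and passing to the limit. First I would observe that each $\mu_k$ satisfies the hypotheses of Lemma~\ref{nondegenerate} (it is a positive Radon measure charging no polar set with $\int f^2\,d\mu_k \le \kappa_k\calE[f]$, $\kappa_k<1$), so Theorems~\ref{comparison1} and \ref{comparison2} apply with $\nu = \mu_k$. This gives, for every $t>0$ and every large $k$,
\begin{eqnarray*}
\big(M(\mu_k,t)\big)^{-1}\xi^{(k)} \leq \varp_0^{(k)} \leq C(\mu_k,t)\,\xi^{(k)},\ a.e.,
\end{eqnarray*}
where $C(\mu_k,t)=\beta(t/2)e^{t\lam_0^{(k)}}$ and $M(\mu_k,t)=(AC\,C(\mu_k,t)+1)(C(\mu_k,t)+1)$; here one should note that the constants $A$ and $C$ entering (ISO1) and Lemma~\ref{L2-estimate} for the case $V=0,F=1$ do not depend on $k$ (they depend only on $C_S$, $C_H$, $C_G$, $\lam_0$, $\|\psi_0\|$ and $\|1\|_{L^\Psi}$, all fixed), which is the point that must be checked carefully.

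Next I would invoke Corollary~\ref{approximation}(i), which gives $\lam_0^{(k)}\to\lam_0^{(\mu)}$, hence $C(\mu_k,t)\to C(\mu,t):=\beta(t/2)e^{t\lam_0^{(\mu)}}$ and $M(\mu_k,t)\to M(\mu,t)$ for each fixed $t$. Setting $\Gamma_k := \max\big(M(\mu_k,t),\,C(\mu_k,t)\big)$ we obtain $\Gamma_k^{-1}\xi^{(k)}\le\varp_0^{(k)}\le\Gamma_k\xi^{(k)}$ a.e.\ for large $k$, with $\Gamma_k\to\Gamma(t):=\max\big(M(\mu,t),C(\mu,t)\big)\in(0,\infty)$. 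Theorem~\ref{strategy} then yields $\Gamma(t)^{-1}\xi^{(\mu)}\le\varp_0^{(\mu)}\le\Gamma(t)\xi^{(\mu)}$ a.e. Since $M(\mu,t)\ge C(\mu,t)$ always, in fact $\Gamma(t)=M(\mu,t)$ on the upper side is not what we want — rather I would run the two one-sided bounds separately: the upper bound $\varp_0^{(k)}\le C(\mu_k,t)\xi^{(k)}$ passes to the limit (using $\varp_0^{(k_j)}\to\varp_0^{(\mu)}$ and $\xi^{(k)}\to\xi^{(\mu)}$ a.e., after extracting a further subsequence, exactly as in the proof of Theorem~\ref{strategy}) to give $\varp_0^{(\mu)}\le C(\mu,t)\xi^{(\mu)}$, and the lower bound $M(\mu_k,t)^{-1}\xi^{(k)}\le\varp_0^{(k)}$ passes to the limit to give $M(\mu,t)^{-1}\xi^{(\mu)}\le\varp_0^{(\mu)}$.

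Finally, since both inequalities hold for \emph{every} $t>0$ and $\xi^{(\mu)},\varp_0^{(\mu)}$ are independent of $t$, I would optimize over $t$: take the infimum over $t>0$ of $C(\mu,t)=\beta(t/2)e^{t\lam_0^{(\mu)}}$ on the upper side, and the infimum of $M(\mu,t)$ on the lower side; because $M(\mu,t)$ is an increasing function of $C(\mu,t)$, minimizing $M(\mu,t)$ amounts to minimizing $C(\mu,t)$, so both infima are achieved through $\inf_{t>0}\beta(t/2)e^{t\lam_0^{(\mu)}}$. This produces exactly
\begin{eqnarray*}
\big((AC\inf_{t>0}\beta(t/2)e^{t\lamomu}+1)(\inf_{t>0}\beta(t/2)e^{t\lamomu}+1)\big)^{-1}\ximu \leq\phiomu\leq\ximu\,\big(\inf_{t>0}\beta(t/2)e^{t\lamomu}\big),\ a.e.
\end{eqnarray*}
The main obstacle is the uniformity in $k$: one must verify that the constant $A=(C+2C_S)(1+2C_S\|1\|_{L^\Psi})$ and the constant $C=\max(C_HC',C_HC'\lam_0)$ — where $C'=C_G^{-2}\big(\int\psi_0 F\,dm\big)^{-2}=C_G^{-2}\big(\int\psi_0\,dm\big)^{-2}$ in the case $V=0,F=1$ — genuinely do not depend on the approximating measure $\mu_k$, and that the function $w=\xi^{(k)}=H_k^{-1}1$ indeed satisfies the lower bound of Lemma~\ref{w-lower} with these same $k$-independent constants. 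Once this uniformity is secured, the rest is a routine limit argument combining Corollary~\ref{approximation}, Lemma~\ref{conv-eigenfunction} and Theorem~\ref{strategy}.
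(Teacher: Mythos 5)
Your proposal is correct and takes essentially the same route as the paper: apply Theorems \ref{comparison1} and \ref{comparison2} with $\nu=\mu_k$, pass to the limit via Corollary \ref{approximation} and Lemma \ref{conv-eigenfunction}, and optimize over $t$. The paper's own proof is very terse (it just cites the two theorems and ``letting $k\to\infty$''); you correctly flag and resolve the one point it leaves implicit, namely that the constants $A$ and $C$ are $k$-independent because $V=0$ kills the $w$-dependent term in $C'$.
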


\begin{proof} Let $\mu_k\uparrow\mu$ (as specified in Section2). By Theorem \ref{comparison1} it holds 

\begin{eqnarray}
\phiok\leq \xi_0^{(k)}\beta(t/2)e^{\lamok t},\ \forall\,t>0.
\end{eqnarray}

Now the right-hand-side inequality follows directly by letting $k\to \infty$ and using  Corollary \ref{approximation}.\\
The reversed inequality is obtained in the same manner by using Theorem \ref{comparison2} and Corollary \ref{approximation}.
\end{proof}

Let us recall that  according to Theorem \ref{compact-resolvent},   $e^{-t\Hmu}$ is Hilbert-Schmidt operator for every $t>0$. Thus $e^{-t\Hmu}$ has a $m\times m$ absolutely continuous kernel. For every $t>0$, we designate by $p_t^{\mu}$ the heat kernel of $e^{-t\Hmu}$. \\
By standard way, we deduce that the operator $\Hmu$ has a Green's kernel which we denote by $G_X^{\mu}$. We can rephrase Theorem \ref{sharp-comparison} in term of the Green's  kernel.

\begin{coro} We have

\begin{eqnarray}
\phiomu\sim\int G_X^{\mu}(\cdot,y)\,dy,\ a.e.
\end{eqnarray}
\label{end}
\end{coro}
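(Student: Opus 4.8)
The plan is to recognize that Corollary \ref{end} is just a restatement of the sharp comparison in Theorem \ref{sharp-comparison}, once we identify the function $\ximu=\Hmu^{-1}1$ with the integral $\int G_X^{\mu}(\cdot,y)\,dy$ against the reference measure $m=dy$. First I would make the Green kernel of $\Hmu$ precise: since by Theorem \ref{compact-resolvent} the semigroup $e^{-t\Hmu}$ is Hilbert--Schmidt for every $t>0$, it has an $m\times m$-measurable kernel $p_t^{\mu}\geq 0$, and the resolvent at $0$ (which exists as a bounded operator because $\Hmu$ is boundedly invertible by (ISO)) has kernel $G_X^{\mu}(x,y)=\int_0^{\infty}p_t^{\mu}(x,y)\,dt$, positive, symmetric and $m\times m$-measurable. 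This is the ``standard way'' already invoked in the paragraph preceding the corollary, so I would only spell it out in one or two lines.

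Next I would show $\ximu(x)=\int G_X^{\mu}(x,y)\,dm(y)$ a.e. This follows because $\ximu=\Hmu^{-1}1$ and $\Hmu^{-1}$ acts as integration against $G_X^{\mu}$: for any bounded $g$ with compact support one has $\langle \Hmu^{-1}1,g\rangle=\int\int G_X^{\mu}(x,y)g(x)\,dm(y)\,dm(x)$ by Fubini (justified since $G_X^{\mu}\in L^1(m\times m)$ on such sets, as $\Hmu^{-1}$ is Hilbert--Schmidt hence the kernel is in $L^2(m\times m)$ and $m(X)<\infty$), whence the pointwise identity a.e. Then Corollary \ref{end} is immediate: Theorem \ref{sharp-comparison}, applied with $\mu$ itself (the hypotheses (SUP), (ISO), (HI) and the growth condition \eqref{growth-condition} are exactly those standing in force), gives two finite positive constants $c_1=\big((AC\inf_{t>0}\beta(t/2)e^{t\lamomu}+1)(\inf_{t>0}\beta(t/2)e^{t\lamomu}+1)\big)^{-1}$ and $c_2=\inf_{t>0}\beta(t/2)e^{t\lamomu}$ with $c_1\ximu\leq\phiomu\leq c_2\ximu$ a.e., and substituting $\ximu=\int G_X^{\mu}(\cdot,y)\,dy$ yields $\phiomu\sim\int G_X^{\mu}(\cdot,y)\,dy$ a.e., which is the assertion.

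The only genuine point requiring care is the justification that $\ximu$ really is represented by integration against $G_X^{\mu}$ rather than merely being ``associated'' to it in some weak sense; here I would lean on the Hilbert--Schmidt property (so the kernel is honestly in $L^2(m\times m)$, and since $m(X)<\infty$ also in $L^1(m\times m)$), which makes the Fubini argument legitimate and the constant function $1$ an admissible right-hand side. Everything else is a direct transcription of Theorem \ref{sharp-comparison}. I therefore expect no substantive obstacle — the content of the corollary is entirely carried by the preceding theorem, and the proof amounts to reading off the equivalence $\phiomu\sim\ximu$ and renaming $\ximu$ as the Green potential of $1$.
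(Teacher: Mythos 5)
Your proposal is essentially the paper's own argument: the paper treats the corollary as a direct rephrasing of Theorem~\ref{sharp-comparison}, with the identification $\ximu=\int G_X^{\mu}(\cdot,y)\,dm(y)$ dismissed ``by standard way''. Your reduction to Theorem~\ref{sharp-comparison} is correct and the substitution of $\ximu$ by the Green potential of $1$ is exactly what is intended; the only weak point is the auxiliary justification you offer for the kernel identity.

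Specifically, the claim that $H_{\mu}^{-1}$ is Hilbert--Schmidt does \emph{not} follow from $e^{-tH_{\mu}}$ being Hilbert--Schmidt for every $t>0$: writing $\lambda_n$ for the eigenvalues of $H_{\mu}$, the hypothesis $\sum_n e^{-2t\lambda_n}<\infty$ for all $t>0$ is compatible with $\sum_n\lambda_n^{-2}=\infty$ (take $\lambda_n\sim\sqrt n$). So you cannot conclude $G_X^{\mu}\in L^2(m\times m)$, and the Fubini appeal you lean on is not grounded. The fix is cheap and is what ``standard way'' is meant to cover: since $e^{-tH_{\mu}}$ is positivity preserving, $p_t^{\mu}\geq 0$; define $G_X^{\mu}(x,y):=\int_0^{\infty}p_t^{\mu}(x,y)\,dt$ (finite a.e.\ since $\lamomu>0$), and for any $0\leq g\in L^2$ apply Tonelli rather than Fubini to
\begin{eqnarray*}
\langle H_{\mu}^{-1}1,g\rangle=\int_0^{\infty}\langle e^{-tH_{\mu}}1,g\rangle\,dt
=\int\Big(\int G_X^{\mu}(x,y)\,dm(y)\Big)g(x)\,dm(x),
\end{eqnarray*}
which yields $\ximu(x)=\int G_X^{\mu}(x,y)\,dm(y)$ a.e.\ with no square-integrability of the kernel required. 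With that one replacement your argument is complete and coincides with the paper's.
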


\begin{rk}{\rm a) We immediately   derive from the latter corollary that if the Green's functions $G_X$ and $G_X^{\mu}$ are comparable then the ground states of $H$ and $\Hmu$ are comparable as well.\\
b) If $\mu$ is such that there is $C^{\mu}\in(0,1)$ with 
$$
\int_X f^2\,d\mu\leq C^{\mu}\calE[f],\ \forall\,f\in\calF,
$$
then by (\ref{reciproque}) (changing $\mu$ by $\frac{1}{C^{\mu}}\mu$),  (SUP) is satisfied. Furthermore since $\calE$ and $\calE_{\mu}$ are equivalent,   inequality (ISO) can be changed by the weaker Sobolev-Orlicz inequality
\begin{eqnarray*}
\parallel \!f^2\!\parallel_{L^{\Phi}}
\leq C_S\mathcal{E}[f],\ f\in \mathcal{F}.
\end{eqnarray*}
In this situation the compactness of $H_{\mu}^{-1}$ can be obtained directly from formula (\ref{resolvent-formula}) and we still get by Theorem \ref{comparison1} together with Lemma \ref{last-estimate}
$$
\phiomu\sim\int G_X^{\mu}(\cdot,y)\,dy,\ a.e.
$$
On the other from (\ref{resolvent-formula}), together with the proof of Lemma \ref{last-estimate} we deduce
\begin{eqnarray}
G\leq G^{\mu}\leq \frac{1}{1-C^{\mu}}G,
\end{eqnarray}
so that
\begin{eqnarray}
\phiomu\sim\psi_0.
\end{eqnarray} 
 
}
\label{weaken}
\end{rk}

We also derive by standard way the following large time asymptotics for the heat kernel.

\begin{coro} There is $T>0$ such that for every $t>T$,
\begin{eqnarray}
p_t^{\mu}(x,y)\sim e^{-\lamomu t}\phiomu(x)\phiomu(y)\sim e^{-\lamomu t}\ximu(x)\ximu(y),\,  m\times m\ a.e. .
\end{eqnarray}
It follows, in particular that
\begin{eqnarray}
 -\lamomu=\lim_{t\to\infty}\frac{1}{t}\ln\big(\frac{p_t^{\mu}(x,y)} {\ximu(x)\ximu(y)}\big).
\end{eqnarray}
\end{coro}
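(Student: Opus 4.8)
The plan is to deduce the large-time heat kernel asymptotics from the spectral information already established, combined with the two-sided estimate $\phiomu\sim\ximu$ of Theorem \ref{sharp-comparison}. First I would recall that $\Hmu$ has compact resolvent (Theorem \ref{compact-resolvent}), so its spectrum consists of a discrete sequence of eigenvalues $\lamomu=:\lambda_0<\lambda_1\leq\lambda_2\leq\cdots$ accumulating only at $+\infty$, with an orthonormal basis of eigenfunctions $(\phi_n)_{n\ge0}$, where $\phi_0=\phiomu$; and that $\lamomu$ is simple (by the nondegeneracy established via Corollary \ref{approximation} and Lemma \ref{nondegenerate}). Since $e^{-t\Hmu}$ is Hilbert--Schmidt with continuous kernel $p_t^\mu$, the spectral expansion gives, for $t>0$, the $L^2$-convergent series
\begin{eqnarray}
p_t^\mu(x,y)=\sum_{n\ge0}e^{-\lambda_n t}\phi_n(x)\phi_n(y),\ m\times m\ a.e.,
\end{eqnarray}
and the point is to extract that for large $t$ the $n=0$ term dominates pointwise.

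The key step is a pointwise bound on the tail $\sum_{n\ge1}e^{-\lambda_n t}\phi_n(x)\phi_n(y)$. The standard device (as in \cite[p.~112]{davies-book}, already used in the proof of Proposition \ref{green}) is a semigroup splitting: write $p_t^\mu=\int p_{t/2}^\mu(x,z)\,p_{t/2}^\mu(z,y)\,dm(z)$, i.e. use $e^{-t\Hmu}=e^{-(t-1)\Hmu/1}\cdots$; more precisely, for $t\ge 2$ set $p_t^\mu(x,y)=\sum_{n}e^{-(t-1)\lambda_n}\,e^{-\lambda_n/2}\phi_n(x)\cdot e^{-\lambda_n/2}\phi_n(y)$ and apply Cauchy--Schwarz after pulling out $e^{-(t-1)\lambda_1}$ from the tail; this yields
\begin{eqnarray}
\Big|\,p_t^\mu(x,y)-e^{-\lamomu t}\phiomu(x)\phiomu(y)\,\Big|\leq e^{-\lambda_1 t}\,p_1^\mu(x,x)^{1/2}p_1^\mu(y,y)^{1/2},
\end{eqnarray}
using that $\sum_n e^{-\lambda_n}|\phi_n(x)|^2=p_1^\mu(x,x)$ is finite $m$-a.e. (ultracontractivity bounds it uniformly). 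Since $\lambda_1>\lamomu$, dividing by $e^{-\lamomu t}\phiomu(x)\phiomu(y)$ and using Proposition \ref{green}/the intrinsic ultracontractivity lower bound $\phiomu\gtrsim\psi_0$ together with the uniform upper bound on $p_1^\mu(x,x)$ coming from ultracontractivity, the error term is $O(e^{-(\lambda_1-\lamomu)t})\to 0$, uniformly on $X\times X$ in the relevant normalized sense. This gives $p_t^\mu(x,y)\sim e^{-\lamomu t}\phiomu(x)\phiomu(y)$ for $t$ large, and the second equivalence $\phiomu(x)\phiomu(y)\sim\ximu(x)\ximu(y)$ is immediate from Theorem \ref{sharp-comparison}.

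The final displayed identity then follows by taking logarithms: from $p_t^\mu(x,y)=e^{-\lamomu t}\ximu(x)\ximu(y)\,(1+o(1))$ we get $\frac1t\ln\big(p_t^\mu(x,y)/(\ximu(x)\ximu(y))\big)\to -\lamomu$ for $m\times m$-a.e. $(x,y)$.

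The main obstacle I anticipate is making the pointwise (rather than merely $L^2$) domination of the leading term rigorous: one must control $\sum_{n\ge1}e^{-\lambda_n t}\phi_n(x)\phi_n(y)$ pointwise, which requires the ultracontractive bound on $p_s^\mu$ to get $\sum_n e^{-\lambda_n s}|\phi_n(x)|^2=p_s^\mu(x,x)<\infty$ a.e. and to ensure the quantities $p_1^\mu(x,x)$ are bounded above (uniformly, or at least locally), while the denominator $\psi_0(x)\psi_0(y)$ may decay near $\partial X$; the ratio $p_1^\mu(x,x)/\psi_0(x)^2$ staying bounded is exactly what intrinsic ultracontractivity (proved in Proposition \ref{green} and its proof) delivers, so one should phrase the estimate in the intrinsically ultracontractive form $e^{-\lambda_n s}\phi_n(x)^2 \le C\,\psi_0(x)^2$-type bounds, carefully tracking that the $o(1)$ is uniform. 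Everything else is a routine assembly of results already in the paper.
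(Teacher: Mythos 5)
Your approach is the standard Davies-style argument that the paper itself invokes (the corollary is stated with the remark "We also derive by standard way..."), so the overall structure — spectral expansion, pull out the $n=0$ term, control the tail by a semigroup splitting and Cauchy--Schwarz, then transfer from $\phiomu$ to $\ximu$ via Theorem~\ref{sharp-comparison} — is correct and is essentially what the paper has in mind.

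One point is, however, misattributed and needs fixing. To make the tail negligible relative to the leading term you need $p_1^{\mu}(x,x)\lesssim \phiomu(x)^2$, i.e.\ \emph{intrinsic} ultracontractivity of $H_{\mu}$ itself, not of $H$. Proposition~\ref{green} only gives intrinsic ultracontractivity of the unperturbed operator $H$, namely $p_1(x,x)\lesssim\psi_0(x)^2$; this does not transfer to $H_{\mu}$ (indeed $p_t^{\mu}\geq p_t$, and $\phiomu$ and $\psi_0$ need not be comparable). The correct source in the paper is the combination of Theorem~\ref{UC} (ultracontractivity of the Doob-transformed semigroup $T_t^{w}$, applied with $w=\varphi_0^{(k)}$, $V=\lam_0^{(k)}$, $F=0$ for the approximating operators $H_k$, which gives $p_t^{(k)}(x,y)\le \beta(t/2)\varphi_0^{(k)}(x)\varphi_0^{(k)}(y)$) together with the norm-resolvent approximation and the two-sided comparison of Theorem~\ref{sharp-comparison}, which lets one pass this bound to $H_{\mu}$ and replace $\varphi_0^{(k)}$ by $\phiomu$ or $\ximu$. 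Once you replace the reference to Proposition~\ref{green} by this chain, your argument closes; the remaining slips (writing $e^{-\lambda_1 t}$ where $e^{-(t-1)\lambda_1}$ appears, phrasing the boundedness in terms of $\psi_0$ rather than $\phiomu$) are cosmetic.
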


\begin{exa} {\rm 
Let $X$ be a bounded uniform domain in $\R^d,\ d\geq 3$ , or equivalently according to \cite[Propostions A.2,A.3]{hansen-uni-harnack} a bounded   interior non-tangentially accessible domain (which is less than NTA domain). Let $H$ be the Dirichlet-Laplacian in $L^2(X,dx)$.  With $\mu=0$  assumptions (SUP) and (ISO) (Sobolev embedding)  are fulfilled. Furthermore according to \cite[Remark 3.1]{hansen-uni-harnack}, we have
\begin{eqnarray}
G_X(x,y)\sim |x-y|^{2-d}\ a.e.\ {\rm on}\ X\times X.
\end{eqnarray}
Whence, setting $\rho(x):=dist(x,\partial X),\ \forall\,x\in X$, we derive that  there is a finite constant $c$ such that
\begin{eqnarray}
G_X(x,y)\geq c\rho(x)\rho(y),\ a.e..
\end{eqnarray}
On the other hand we have 
\begin{eqnarray}
\psi_0(x) =\lam_0^{-1}\int_X G_X(x,y)\psi(y)\,dy\geq c´\rho(x),\ \forall\,x\in X.
\label{phi-dist}
\end{eqnarray} 
Suppose  furthermore that the distance function , $\rho$ is superharmonic, i.e.,  $-\Delta\rho\geq 0$. Then setting $w=\rho^{1/2}$, a straightforward computation yields 
\begin{eqnarray}
-\Delta w-\frac{1}{4}\frac{w}{\rho^2}\geq 0 \ {\rm on}\ X.
\end{eqnarray}
Thus by a result due to Ancona \cite[Propostion 1]{ancona} we deduce
\begin{eqnarray}
\int_X\frac{f^2(x)}{\rho^2(x)}\,dx\leq 4\int_X|\nabla f|^2\,dx,\ \forall\,f\in W_0^{1,2}(X),
\end{eqnarray}  
which together with inequality (\ref{phi-dist}) imply the occurrence of Hardy inequality (HI).\\
Whence,   in this situation all assumptions are satisfied and therefore we get
\begin{eqnarray}
\psi_0\sim \int_X G_X(\cdot,y)\,dy\sim \int_X|\cdot-y|^{2-d}\,dy,\ {\rm on}\ X.
\end{eqnarray}
b) Let $\mu$ be any measure satisfying  condition of Remark \ref{weaken}-b). Then by Remark  \ref{weaken}, we get
\begin{eqnarray}
\phiomu \sim \int_X|\cdot-y|^{2-d}\,dy,\ {\rm on}\ X.
\end{eqnarray}
c)  Finally we consider the measure $\mu$ defined by $d\mu(x)=\frac{1}{4}\rho^{-2}(x)dx$. With this choice assumption  (SUP) is fulfilled by $s=\rho^{1/2}\in W_{loc}^{1,2}(X)$. Moreover,  according to \cite[Theorem 1.1-(1.2)]{filippas-mazya} (with $p=2,\ q=\frac{2d}{d-2}$) we learn  that there is $C_S\in (0,\infty)$ such that
\begin{equation}\label{hd}
  \big(\int_{X}|f|^{q}\,dx\big)^{2/q} \leq C_S\big(\calE[f]-\frac{1}{4}\int_{X}\frac{f^2}{\rho^2}\,dx\big), \ \forall\,f\in  W_0^{1,2}(X).
\end{equation}
Whence (ISO) is satisfied with $\Phi(t)=\frac{t^{q/2}}{q/2}$ which is an N-function which corresponding $\phi_1$ is admissible. Thus we establish
\begin{eqnarray}
\phiomu\sim\int_X G_X^{\mu}(\cdot,y)\,dy.
\end{eqnarray}

}
\end{exa}

{\bf Acknowledgment}: A. BenAmor is grateful for the warm hospitality during his stay at the University of Bielefeld where part of this work has  been done.

\bibliography{biblio-hardy}


\begin{tabular}{ll}
A.~Beldi &  \quad N.~Belhadjrhouma\\
Faculty of Sciences of Tunis, Tunisia &  \quad Faculty of Sciences of Tunis, Tunisia\\
{\small beldiali@gmail.com} &  \quad 
{\small nedra.belhadjrhouma@fst.rnu.tn} \\
& \\
& \\
A.~BenAmor & \\
 Faculty of Sciences of Gafsa, Tunisia & \\
{\small ali.benamor@ipeit.rnu.tn } &
\end{tabular}

\end{document}